\documentclass[11pt]{article}
\usepackage{amsmath,amsthm,amssymb,hyperref, color,fullpage}
\usepackage{blkarray}
\usepackage{verbatim}

\newcommand{\remove}[1]{}

\makeatletter
\newtheorem*{rep@theorem}{\rep@title}
\newcommand{\newreptheorem}[2]{%
\newenvironment{rep#1}[1]{%
 \def\rep@title{#2 \ref{##1}}%
 \begin{rep@theorem}}%
 {\end{rep@theorem}}}
\makeatother

\hypersetup{
	colorlinks=true,
	linkcolor=blue,%
	citecolor=blue,
	linktoc=page
}
\newcommand\numberthis{\addtocounter{equation}{1}\tag{\theequation}}

\newtheorem{thm}{Theorem}[section]
\newreptheorem{thm}{Theorem}
\newtheorem{claim}[thm]{Claim}
\newtheorem{lem}[thm]{Lemma}
\newtheorem{define}[thm]{Definition}
\newtheorem{cor}[thm]{Corollary}

\newtheorem{example}[thm]{Example}

\newtheorem{prop}[thm]{Proposition}


\def\B{{\mathcal B}}

\def\PATH{{\textsf{PATH}}}

\def\F{{\mathbb{F}}}
\def\E{{\mathbb{E}}}
\def\kF{{\overline{\mathbb{F}}}}

\def\kR{{\overline{R}}}
\def\ce{{\hat{e}}}

\def\Z{{\mathbb{Z}}}
\def\N{{\mathbb{N}}}
\def\R{{\mathbb{R}}}

\def\V{{\mathbf{V}}}
\def\I{{\mathbf{I}}}

\def\E{{\mathbb E}}

\def\_{\,\,\,\,\,}



\usepackage{leftidx}

\usepackage{tikz-cd}

\begin{document}

\title{Furstenberg sets in finite fields: Explaining and improving the Ellenberg-Erman proof}
\author{Manik Dhar\thanks{Department of Computer Science, Princeton University. Email: \texttt{manikd@princeton.edu}. Research supported by NSF grant DMS-1953807.} \and  Zeev Dvir\thanks{Department of Computer Science and Department of Mathematics,
Princeton University.
Email: \texttt{zeev.dvir@gmail.com}. Research supported by NSF CAREER award DMS-1451191 and NSF grant CCF-1523816 and DMS-1953807.} \and Ben Lund\thanks{Discrete Mathematics Group, Institute for Basic Science (IBS), Daejeon, Republic of Korea. Email: \texttt{lund.ben@gmail.com}. Research supported by NSF postdoctoral fellowship DMS-1802787 and the Institute for Basic Science IBS-R029-C1.}}
\date{}

\maketitle

\begin{abstract}
A $(k,m)$-Furstenberg set is a subset $S \subset \F_q^n$ with the property that each $k$-dimensional subspace of $\F_q^n$ can be translated so that it intersects $S$ in at least $m$ points.
Ellenberg and Erman \cite{ellenberg2016furstenberg} proved that $(k,m)$-Furstenberg sets must have size at least $C_{n,k}m^{n/k}$, where $C_{n,k}$ is a constant  depending only $n$ and $k$.
In this paper, we adopt the same proof strategy as Ellenberg and Erman, but use more elementary techniques than their scheme-theoretic method.
By modifying certain parts of the argument, we obtain an improved bound on $C_{n,k}$, and our improved bound is nearly optimal for an algebraic generalization the main combinatorial result.
We also extend our analysis to give lower bounds for sets that have large intersection with shifts of a specific family of higher-degree co-dimension $n-k$ varieties, instead of just co-dimension $n-k$ subspaces.
\end{abstract}

\section{Introduction}

Let $\F_q$ be a finite field, where $q$ is a prime power.
Given a finite set of points $S \subset \F_q^n$, a $k$-dimensional affine subspace $V$ in $\F^n_q$ is {\em $(S,m)$-rich} if $|S\cap V|\ge m$.

\begin{define}[Furstenberg Sets]
A set $S\subseteq \F^n_q$ is {\em $(k,m)$-Furstenberg}, if for each $k$-dimensional linear subspace $V$, some translate of $V$ is $(S,m)$-rich.
\end{define}

The study of finite field Furstenberg sets can be traced back to a question posed by Wolff \cite{wolf1999}. He asked whether a set in $\mathbb{F}_q^n$ that contains a line in each direction must have at least $C_n q^n$ points, where $C_n$ is a positive constant that depends only on $n$.
This question, known as the finite field Kakeya problem, is a combinatorial version of the notoriously difficult Euclidean Kakeya problem that is linked to  fundamental questions in harmonic analysis.
Dvir \cite{dvir2009size} answered Wolff's question affirmatively using the polynomial method, and his proof immediately implies the following bound on Furstenberg sets for lines.

\begin{thm}\label{KakeyaTheorem}
A $(1,m)$-Furstenberg set $S\subseteq \F^n_q$ satisfies the bound,
\begin{align*}
 |S| &\ge C_n m^{n}
\end{align*}
where $C_n$ depends only on $n$. 
\end{thm}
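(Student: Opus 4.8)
The plan is to use the polynomial method, i.e.\ Dvir's proof of the finite field Kakeya theorem carried out in its ``$m$-rich'' form. Suppose toward a contradiction that $|S| < m^n/n!$. First I would produce a nonzero low-degree polynomial vanishing on $S$: the space of polynomials in $\F_q[x_1,\dots,x_n]$ of total degree at most $D$ has dimension $\binom{n+D}{n}$, so as soon as $\binom{n+D}{n}>|S|$ the homogeneous linear system imposing vanishing at every point of $S$ has a nontrivial solution. Taking $d$ minimal with this property and a corresponding nonzero $P$ of exact degree $d'\le d$, minimality together with the estimate $\binom{n+d-1}{n}=\frac{d(d+1)\cdots(d+n-1)}{n!}\ge d^n/n!$ gives $d\le (n!\,|S|)^{1/n}<m$, and hence $d'\le d<m\le q$ (recall $m\le q$, since a line has only $q$ points).

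Next I would use the Furstenberg hypothesis to force the top homogeneous part $P_{d'}$ of $P$ to vanish on all of $\F_q^n$. For any direction $v\in\F_q^n\setminus\{0\}$, the hypothesis provides a basepoint $a$ so that the line $a+\F_q v$ is $(S,m)$-rich; restricting $P$ to this line gives a univariate polynomial $t\mapsto P(a+tv)$ of degree at most $d'<m$ with at least $m$ roots, so it vanishes identically, and reading off the coefficient of $t^{d'}$ shows $P_{d'}(v)=0$. Since $P_{d'}$ is homogeneous of positive degree it also vanishes at $0$, so the nonzero polynomial $P_{d'}$ vanishes everywhere on $\F_q^n$. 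But a nonzero polynomial of total degree $<q$ cannot vanish identically on $\F_q^n$ (Schwartz--Zippel, or induction on $n$ from the one-variable statement), while $\deg P_{d'}=d'<q$ -- a contradiction. Hence $|S|\ge m^n/n!$, which is the asserted bound with $C_n=1/n!$.

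I do not expect a serious obstacle for $k=1$: the argument is essentially self-contained, and the only delicate points are bookkeeping -- choosing $d$ so that it is strictly smaller than $m$ (this is where the factor $n!$ is spent), and tracking the \emph{exact} degree $d'$ of $P$ rather than the a priori bound $d$ so that the top homogeneous part $P_{d'}$ is genuinely nonzero. It is worth noting that this last step is precisely what fails when $k>1$: restricting $P$ to a $k$-flat no longer produces a one-variable polynomial, and it is exactly this difficulty that the Ellenberg--Erman machinery of the rest of the paper is designed to overcome.
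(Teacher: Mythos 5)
Your proof is correct: it is precisely Dvir's polynomial-method argument carried out in its $m$-rich form (low-degree interpolation on $S$, restriction to an $(S,m)$-rich line in each direction to kill the top homogeneous part, then the degree-$<q$ vanishing contradiction), giving $C_n=1/n!$. The paper does not prove Theorem~\ref{KakeyaTheorem} itself but cites exactly this argument from Dvir's paper, so your route coincides with the intended one; the only (harmless) bookkeeping left implicit is the trivial cases $m\le 1$ and $m>q$, which ensure $S\neq\emptyset$ and $d'\ge 1$.
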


Dvir proved Theorem \ref{KakeyaTheorem} with $C_n = 1/n!$.
Subsequent work \cite{saraf2008, dvir2013extensions} improved the value of $C_n$, and the best possible value of $C_n = 2^{-n+1}$ was recently proved by Bukh and Chao \cite{BC21}.
For a more detailed discussion of this result and its applications, see the survey article \cite{dvir-survey}.

Ellenberg, Oberlin, and Tao \cite{ellenberg_oberlin_tao_2010} used a modification of Dvir's argument to show that, if $S$ is a $(k,q^k)$-Furstenberg set, then $|S| \geq (1-o_q(1))q^n$, where $o_q(1)$ denotes a function $f(q)$ such that $\lim_{q \rightarrow \infty} f(q) = 0$.
Kopparty, Lev, Saraf, and Sudan \cite{kopparty2011kakeya} later improved the $o_q(1)$ term.
However, unlike in the $k=1$ case, the proof of this result does not generalize for $m$ much smaller than $q^k$.

Ellenberg and Erman gave the first non-trivial, fully general lower bound on the size of finite field Furstenberg sets.
In particular, they showed that, if $S$ is a $(k,m)$-Furstenberg set in $\F_q^n$, then $|S| \geq C_{n,k}m^{n/k}$, where $C_{n,k}$ is a constant that depends only on $n$ and $k$.
Instead of bounding the size of Furstenberg sets directly, they introduced and worked on more general Furstenberg schemes; see definition \ref{def:furstenbergAlgebra} for an equivalent definition of {\em Furstenberg algebras} that avoids the language of schemes.

While Ellenberg and Erman didn't explicitly specify the value of $C_{n,k}$, a close inspection of their proof gives $C_{n,k} = (1/n)^{O(n \ln(n/k)}$.
Our first result is a quantitative improvement to Ellenberg and Erman's result.
Our sharpening of their bound for Furstenberg algebras leads to the following improvement to their bound for Furstenberg sets.

\begin{thm}[Furstenberg Set Bound]\label{SetTheorem}
A $(k,m)$-Furstenberg set $S\subseteq \F^n_q$ satisfies the bound,
\begin{align*}
 |S| &\ge C_{n,k} m^{n/k}
\end{align*}
where $C_{n,k}=\Omega((1/16)^{n\ln(n/k)})$. 
\end{thm}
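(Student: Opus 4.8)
The strategy follows Ellenberg--Erman. The plan is to argue by contradiction: suppose $S \subseteq \F_q^n$ is $(k,m)$-Furstenberg with $|S|$ much smaller than $C_{n,k} m^{n/k}$, and derive a contradiction by constructing a carefully chosen family of polynomials vanishing on $S$ to high order and then counting dimensions. The core objects are graded pieces of ideals: for each linear $k$-subspace $V$, fix a rich translate $V + b_V$ with $|S \cap (V + b_V)| \ge m$, and consider how a polynomial of bounded degree that vanishes (to some order $\ell$) on $S$ restricts to these flats. Since each such flat is a copy of $\F_q^k$ (or rather the affine space $\A^k$), restricting a nonzero low-degree polynomial to it gives a polynomial in $k$ variables, and vanishing to order $\ell$ at $m$ points of $\A^k$ forces the restricted polynomial to be identically zero once $m$ is large relative to the degree raised to the $k$-th power — this is the $k$-variable Schwartz--Zippel/multiplicity argument, essentially the mechanism behind Theorem~\ref{KakeyaTheorem}. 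So a low-degree polynomial vanishing to high order on $S$ must vanish identically on every rich flat, hence (varying the direction $V$) its leading behavior is constrained along every $k$-dimensional direction at once.

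Concretely, I would set up the following dimension count. Let $R = \F_q[x_1,\dots,x_n]$ and let $I(S)$ denote (some power of) the ideal of $S$; work with homogenized coordinates so everything is graded. On one hand, the space of polynomials of degree $\le d$ vanishing to order $\ge \ell$ on the set $S$ has codimension at most $|S|\binom{k+\ell-1}{k}\cdot(\text{something})$ in the full space of degree-$\le d$ polynomials — so if $|S|$ is small this space is large. On the other hand, the restriction-to-rich-flats argument shows every element of this space lies in the ideal generated by the equations of the rich flats, i.e. it is divisible (in a graded sense, after passing to the associated graded / leading forms) by the product of linear forms cutting out each direction. The combinatorial heart is to choose the parameters $d$, $\ell$ and the "order of vanishing" along flats so that the lower bound on the dimension of the vanishing space exceeds the upper bound coming from divisibility by these products of linear forms — the latter being controlled because there are $\approx q^{(n-k)k}$ directions, far more than $d$, so a polynomial of degree $\le d$ cannot be divisible by that many independent linear forms unless it is zero. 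Contradiction. The improved constant comes from being more efficient in exactly this tradeoff — presumably optimizing the choice of $\ell$, using multiplicities more cleverly, and tightening the binomial-coefficient estimates, replacing a crude bound (giving $(1/n)^{O(n\ln(n/k))}$) with one losing only a factor of $16$ per "scale" in an induction/iteration on $k$.

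The main obstacle, and where the real work lies, is making the leading-form / degeneration argument rigorous without scheme theory: one must show that a polynomial vanishing on all rich flats has its top-degree part (or some intermediate graded piece) constrained by the product of the linear forms defining the directions, and control how the order of vanishing on $S$ transfers to an order of vanishing of the restriction along each flat — this is where Ellenberg--Erman used flatness/Hilbert-polynomial machinery, and replacing it needs a hands-on argument about Hilbert functions of the relevant graded modules, likely an induction on $k$ (or on $n-k$) reducing the $(k,m)$-Furstenberg problem in $\F_q^n$ to a lower-dimensional instance. I expect to need: (i) a clean multiplicity Schwartz--Zippel lemma on $\A^k$; (ii) a Hilbert-function lower bound for ideals of small sets with multiplicities; (iii) a "too many linear factors forces zero" lemma; and (iv) the arithmetic to balance these. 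Steps (ii) and the inductive bookkeeping that yields the clean $(1/16)^{n\ln(n/k)}$ constant will be the delicate parts.
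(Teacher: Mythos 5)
There is a genuine gap, and it sits at the very center of your plan. Your key mechanism is the claim that if a polynomial of degree at most $d$ (restricted to a rich $k$-flat, i.e.\ viewed as a $k$-variate polynomial) vanishes to order $\ell$ at the $m$ points of $S$ in that flat, then it must vanish identically on the flat once $m$ is large compared to the degree. No such statement holds for $k\ge 2$ when $m<q^k$: Schwartz--Zippel with multiplicities (Lemma~\ref{multSchwartz}) crucially requires the points to form a grid $U^k$, and for an arbitrary configuration of $m$ points the conclusion is simply false --- if, say, the $m$ points of $S$ inside the flat happen to lie on a hyperplane (or any low-degree hypersurface) of that flat, then a power of its defining equation vanishes to order $\ell$ at every one of them while having degree far below any threshold of the form $m^{1/k}\ell$. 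This is precisely the obstruction the introduction points out: the direct polynomial method works when $m=q^k$, because then the restriction vanishes (with multiplicity) on the entire grid $\F_q^k$, but it breaks down for $m<q^k$. If your restriction step were valid, Theorem~\ref{SetTheorem} would follow by essentially the same argument as Theorem~\ref{KakeyaTheorem} and the entire Ellenberg--Erman apparatus would be unnecessary; the nonexistence of such a shortcut is the whole reason for that apparatus.

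For contrast, the paper never tries to force an auxiliary polynomial to vanish on the rich flats. It encodes $S$ as the algebra $\text{Alg}(S)$ (Corollary~\ref{FrustSet2Alg}), homogenizes via the $\text{hd}$ operation (Lemma~\ref{DFrustLem}), reduces by induction to the hyperplane case $k=n-1$ (Lemma~\ref{recurseBound}), and then works in the \emph{dual} space of hyperplane coefficients: the ideal $J_l(R)$ of Theorem~\ref{Jmdef}, cut out by minors of a multiplication map, whose variety is the set of $(R,l)$-rich hyperplane equations. The two cases $J_l(R)=\langle 0\rangle$ and $J_l(R)\neq\langle 0\rangle$ are handled respectively by generic initial ideals and a Borel-fixed lattice count (Theorem~\ref{thm:furstPlaneZero}, Lemma~\ref{latticeBound}) and by Schwartz--Zippel with multiplicities applied to a nonzero element of $J_l(R)$ over the genuine grid $\F_q^n$ of hyperplane coefficients (Theorem~\ref{thm:furstPlaneNon}) --- note that the multiplicity lemma is used in the $h$-variables, where a grid is available, not at the arbitrary $m$ points of $S$. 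A secondary problem with your outline: for $k<n-1$ a direction is cut out by $n-k$ independent linear forms, so ``divisibility by the product of the linear forms of the $\approx q^{k(n-k)}$ directions'' is not even the right algebraic condition (membership in an intersection of such ideals is much weaker than divisibility), but the primary failure is the restriction step above, and without it the proposal does not yield a proof.
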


The proof of Theorem \ref{SetTheorem} generally the same as the proof of Ellenberg and Erman, but we avoid the language of schemes to give a more elementary presentation.
The modifications of the proof that lead to the quantitative improvement are discussed in Section \ref{sec:comparison}.

As with the work of Ellenberg and Erman, Theorem \ref{SetTheorem} is a corollary to a more general result on Furstenberg algebras; see Theorem \ref{AlgebraBound}.
The value of $C_{n,k}$ given in Theorem \ref{SetTheorem} is nearly optimal for Furstenberg algebras.
Indeed, Example \ref{constantBound} shows that $C_{n,k}$ in Theorem \ref{AlgebraBound} cannot be larger than $O(e^{-n \log(n/k)})$.

For Furstenberg sets, a simple combinatorial argument (unrelated to \cite{ellenberg2016furstenberg}) gives the following bound, which is superior to Theorem \ref{SetTheorem}  for $m = o(q^{k-k/(k+1)})$.

\begin{thm}[Easy Furstenberg Bound]\label{th:combinatorial}
If $S \subset \F_q^n$ is a (k,m)-Furstenberg set in $\F_q^n$ and $\ell$ is an integer with $1 \leq \ell < \log_q(m)+1$, then
\[ |S|^{\ell +1} \geq q^{\ell(n-k)}m(m-1)(m-q)\ldots(m-q^{\ell-1}) .\]
\end{thm}

Recent work by the current authors \cite{DDL-2} uses completely different techniques to prove Theorem \ref{SetTheorem} with $C_{n,k} = 2^{-n}$.
As with Theorem \ref{th:combinatorial}, the bound proved in \cite{DDL-2} is not true for Furstenberg algebras, and the techniques used there cannot lead to any improvement in Theorem \ref{AlgebraBound}.
It is also unlikely that the techniques of \cite{DDL-2} can be adapted to prove higher-degree analogs of Theorem \ref{SetTheorem}, such as Theorem \ref{thm:hyperBound}.

Our final result is a generalization of Theorem \ref{SetTheorem} for higher degree surfaces. First, for the sake of simplicity we restrict ourselves to the $k=n-1$ case. 
In an $(n-1,m)$-Furstenberg set $S \subset \F_q^n$, for any hyperplane equation $h(x) = a_1x_1 + \ldots + a_nx_n$ there is a constant $c \in \F_q$ such that the equation $h(x) = c$ has at least $m$ solutions in $S$. 
A higher degree analog of this property is that, for any homogeneous degree $d$ equation $h(x)$, there is an equation $f(x)$ of degree at most $d-1$ such that the equation $h(x) = f(x)$ has at least $m$ solutions in $S$. We show that such sets must be large, even if we only require the property to hold for $h(x)$ that are $d$'th power of a hyperplane. The proof turns out to be quite simple given all the machinery already developed to tackle the linear case.  This is defined more formally below.

A {\em hypersurface} in $\F_q^n$ is defined as a zero set of some polynomial in $\F_q[x_1,\hdots,x_n]$. Given a subset $S\subseteq \F_q^n$, a hypersurface is called {\em $(S,m)$-rich} if it contains $m$ many points from $S$. We can generalize this further for higher co-dimension varieties. A set of polynomials $p_1,\hdots,p_k$ is said to be $(S,m)$-rich for a set $S\subseteq \F_q^n$ if $|\{x\in S| p_1(x)=\hdots=p_k(x)=0\}|\ge m$. 

\begin{define}[Hyper-Furstenberg Sets]
A set $S\subseteq \F^n_q$ is  {\em $(k,m,d)$-Hyper-Furstenberg} if, for any $k$-dimensional subspace $U\subseteq \F_q^n$, there exist linearly independent hyperplanes $h_1,\hdots,h_{n-k}$ that contain $U$ and polynomials $g_i\in \F_q[x_1,\hdots,x_n]$ for $i=1,\hdots,n-k$ of degree at most $d_i-1$ with $\prod_{i=1}^{n-k} d_i\le d$ such that the set of polynomials $h_1^{d_1}+g_1,\hdots,h_k^{d_k}+g_k$ is $(S,m)$-rich.
\end{define}

We see that $(n-k,m,d)$-Hyper-Furstenberg sets have large intersections with shifts of a special family of degree $d$ and co-dimension $k$ varieties. For $k=1$ we see that we are looking at shifts of degree $d$ hyper-surfaces. 

\begin{thm}[Hyper-Furstenberg Bound]\label{thm:hyperBound}
A $(k,m,d)$-Hyper-Furstenberg Set $S\subseteq \F^n_q$ satisfies the bound,
$$|S|\ge C_{n,k}\left(\frac{m}{d}\right)^{n/k},$$
where $C_{n,k}=\Omega(1/16^{n\ln(n/k)})$.
\end{thm}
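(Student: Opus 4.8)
The plan is to mimic the proof of the codimension-one case $k = n-1$ of Theorem~\ref{SetTheorem}, which is exactly the $d = 1$ instance of the present statement, substituting a rich degree-$d$ hypersurface for a rich hyperplane throughout. Given an $(m,d)$-Hyper-Furstenberg set $S$ and a hyperplane equation $h = h_1 x_1 + \cdots + h_n x_n$, let $Z_h = \{x : h(x)^d + g(x) = 0\}$ be the rich hypersurface guaranteed by the definition, with $\deg g \le d-1$. Passing to homogeneous coordinates $[x_0 : x_1 : \cdots : x_n]$, the projective closure $\overline{Z}_h$ is cut out by the degree-$d$ form $F_h = h^d + x_0\,\widetilde{g}$, where $\widetilde{g}$ is the homogenization of $g$ to degree $d-1$; in particular $F_h \equiv h^d \pmod{x_0}$, and in affine coordinates chosen so that $h = x_n$ the defining equation $x_n^d + g = 0$ is monic of degree $d$ in $x_n$, exhibiting $Z_h$ as a $d$-sheeted cover of a coordinate hyperplane $\F_q^{n-1}$. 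Morally a rich degree-$d$ hypersurface should be worth the same as $d$ parallel copies of a rich $(n-1)$-flat, and indeed the bound to be proved is precisely the $k=n-1$ bound of Theorem~\ref{SetTheorem} with $m$ replaced by $m/d$.

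I would then re-run the codimension-one argument, checking that the two places where it uses the geometry of a rich hyperplane survive the substitution, each at the cost of a factor $d$. First, the combination over all directions: in the linear case a bounded-degree form lying in the homogeneous ideal of $S$ is forced to be divisible by the defining linear form of every rich hyperplane, and since these linear forms are pairwise non-proportional across distinct directions while there are far more directions than the degree permits factors, the form must vanish. The same works here: if $h \not\propto h'$ then $\gcd(F_h, F_{h'}) = 1$ — a common irreducible factor would, reduced modulo $x_0$, divide both $h^d$ and $(h')^d$ and hence force $h \propto h'$ — so a form divisible by the $F_h$ of $r$ distinct directions has degree at least $dr$, which still forces the obstructing form to vanish, but now each divisibility constraint consumes $d$ units of degree instead of $1$. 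Second, the degree threshold below which the ideal of $S$ cannot detect a rich object: in the linear case this is controlled by the Hilbert function of a hyperplane $\P^{n-1}$, and here it becomes the Hilbert function of the degree-$d$ hypersurface $\overline{Z}_h$, namely $\binom{n+t}{n} - \binom{n-d+t}{n}$, which to leading order is $d$ times that of $\P^{n-1}$. Carrying these two factor-$d$ changes through the bookkeeping replaces $m$ by $m/d$ everywhere, turning the constant $1/16$ of the codimension-one analysis into $C_{n,d} = (16\, d^{\,n/(n-1)})^{-1}$.

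The step I expect to be the main obstacle is the faithful transfer of the second ingredient. Already in the linear case one cannot argue by naive zero-counting — $q$ may be small, and the $\ge m$ points of $S$ on a rich flat may lie on a lower-dimensional subvariety — so the codimension-one proof uses an ideal-theoretic / regularity argument valid over every $\F_q$; one must verify that this argument goes through word for word with the degree-$d$ hypersurface $\overline{Z}_h$ in place of a hyperplane, and that restricting forms from $\P^n$ to $\overline{Z}_h$ and tracking the resulting Hilbert function introduces exactly the factor $d$ and nothing coarser. Everything else — producing a nonzero form of controlled degree vanishing on $S$ when $|S|$ is too small, moving between the affine and projective pictures, and the final divisibility-across-directions contradiction — is a line-by-line transcription of the $k=n-1$ machinery, which is why the whole argument is short once that machinery is available.
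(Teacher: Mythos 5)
Your guiding intuition---that a rich degree-$d$ hypersurface should be worth $d$ rich hyperplanes, so the codimension-one bound applies with $m$ replaced by $m/d$---is exactly right, and your final constant agrees with the paper's. But the plan for realizing it has a genuine gap. The ``codimension-one argument'' you propose to transcribe (a form in the ideal of $S$ divisible by the linear form of every rich hyperplane, plus a Hilbert-function/regularity threshold) is not the argument this paper uses, and no such direct polynomial-method argument is available once $m<q^{n-1}$, which you only partly acknowledge. The actual hyperplane proof runs through the ideal $J_m(R)$ of $(R,m)$-rich hyperplane equations (minors of the multiplication-by-$h$ map on the algebra, Theorem \ref{Jmdef}), a degeneration to the generic initial ideal and a combinatorial bound for Borel-fixed lattices when $J_m(R)=\langle 0\rangle$ (Section \ref{sec:IdealZero}), and Schwartz--Zippel with multiplicities over the $n$-dimensional space of hyperplane coefficients when $J_m(R)\neq\langle 0\rangle$ (Section \ref{sec:MultCase}). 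None of these steps transfers ``word for word'' to the equations $h^d+g$: the parameter space now includes the coefficients of $g$, and the Hyper-Furstenberg hypothesis supplies only \emph{one} rich $g$ per direction $h$, so rich equations no longer fill out the parameter space and the Schwartz--Zippel step collapses; and the Borel-group step, which moves an arbitrary hyperplane to a coordinate hyperplane while preserving a Borel-stable monomial ideal, has no analog for general degree-$d$ forms. Your coprimality observation about the $F_h$ never enters the real argument, so verifying it does not advance the proof.

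The missing idea is that no re-run of the machinery is needed: one reduces to the hyperplane theorem as a black box at the level of algebras. Pass to $R=\text{hd}(\text{Alg}(S))$, which has dimension $|S|$; since $\text{hd}(h^d+g)=h^d$, every pure power $h^d$ is $(R,m)$-rich, so the lower-degree correction $g$ disappears entirely (this is the algebraic counterpart of your remark that $F_h\equiv h^d$ modulo $x_0$, but used for reduction rather than for gcd's). Then a standard-monomial count (Lemma \ref{lem:hyperToPlane}) shows $\text{dim}_\F R/\langle h\rangle \ge \frac{1}{d}\,\text{dim}_\F R/\langle h^d\rangle$: after a linear change of variables taking $h$ to $x_1$, at least $m$ standard monomials have $x_1$-degree strictly less than $d$, every factor of a standard monomial is standard, and each $x_1$-free standard monomial accounts for at most $d$ of those; hence at least $m/d$ standard monomials avoid $x_1$ altogether. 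Thus $R$ is an $(n-1,m/d)$-Hom-Furstenberg algebra, and Theorem \ref{thm:furstPlaneBound} (applicable since $m\le dq^{n-1}$ by Lemma \ref{multSchwartz}) gives $|S|\ge (m/d)^{n/(n-1)}/16$, which is the claimed bound. The projective closure, the Hilbert function of $\overline{Z}_h$, and the pairwise coprimality of the $F_h$ play no role.
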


\subsection{Key modifications to the proof of  Ellenberg-Erman}\label{sec:comparison}

For the sake of readers familiar with \cite{ellenberg2016furstenberg}, we outline the main modifications we make to their proof (readers unfamiliar with \cite{ellenberg2016furstenberg} are encouraged to skip this discussion). We only mention those modifications that allow us to improve the quantitative bounds (i.e., not just using different language). 

The improvement to the constant $C_{n,k}$ in Theorem~\ref{SetTheorem} follows from a more significant looking improvement to an intermediate result, Theorem~\ref{thm:furstPlaneBound} in our paper (which appears in \cite{ellenberg2016furstenberg} as the case $k=n-1$ of Theorem 1.5). 
This intermediate result deals with the more general object  of Furstenberg Algebra (or Furstenberg scheme in \cite{ellenberg2016furstenberg}). Theorem~\ref{SetTheorem} follows from this intermediate result by an inductive argument and the improvement to the constant  carries over in the reduction. Ellenberg-Erman prove Theorem~\ref{thm:furstPlaneBound} with constant $1/n$, whereas we are able to improve it to an absolute constant $1/16$. The reduction from Theorem~\ref{SetTheorem} translates this improvement to the final $C_{n,k}=\Omega((1/16)^{n\ln(n/k)})$.

The first main modification is in the part of the proof dealing with Borel-fixed subsets of the integer lattice (following the degeneration to a generic initial ideal in the case $X_{m,k}^S = {\mathrm Gr}(k, n)$). In Lemma 5.3 of \cite{ellenberg2016furstenberg} these are dealt with using a rather short proof by induction. In Lemma~\ref{latticeBound} we give a  different, more involved, treatment of such sets which results in an improved quantitative bound.

The other main modification is the place in \cite{ellenberg2016furstenberg} which uses a deep result of Hochster and Huneke \cite{Hochster2002} to bound the number of $m$-rich flats in the case  $X_{m,k}^S \neq {\mathrm Gr}(k, n)$. In Section~\ref{sec:MultCase} we replace this with the more elementary Schwartz-Zippel lemma (with multiplicities) to arrive at a better quantitative bound.

\subsection{Organization}

We start by discussing some preliminaries in the next section. These include basic facts about polynomial rings and ideals, monomial orderings and zeros of polynomials as well as definitions of Furstenberg algebras, which are the main object we will work with in the paper. Next, in section \ref{sec:EEreduction} we discuss the  Ellenberg-Erman reduction from Theorem~\ref{SetTheorem} to a simpler statement involving only hyperplanes. Section \ref{sec:VarOfRichHyper} constructs the ideal which vanishes on rich hyperplane equations, the central object used to analyse the problem in the hyperplane case. Sections \ref{sec:IdealZero} and \ref{sec:MultCase} discuss the cases when this ideal is zero or not respectively. As at this point as we will have all the required tools, we prove Theorem \ref{thm:hyperBound} in section \ref{sec:higherdegree} by reducing it to the Furstenberg Algebra theorem for hyperplanes.

\section{Preliminaries}
\subsection{Polynomial rings, Ideals, and Varieties}\label{prelim:RingsIdeals}

The core algebraic objects we will be using are polynomial rings $\F[x_1,\hdots,x_n]$, their ideals $I$, and their quotients $\F[x_1,\hdots,x_n]/I$ for an arbitrary field $\F$. Later we will focus on $\F_q$ and its algebraic closure. Given a ring $R$, we use $\langle f_1,\hdots,f_k\rangle$ to refer to the ideal generated by elements $f_1,\hdots,f_k\in R$. The sum $I+J$ refers to the ideal generated by elements of the form $f+g$ with $f\in I,g\in J$. The product $IJ$ refers to the ideal generated by elements of the form $fg$ with $f\in I,g\in J$. It is easy to check that given two ideals $I$ and $J$, $(J+I)/I$ is an ideal of the ring $R/I$. It is also easy to see $(R/I)/((J+I)/I)=R/(I+J)$. For brevity, we write $(R/I)/((J+I)/I)$ as $(R/I)/J$. There is no cause for confusion as $(J+I)/I$ is precisely the ideal generated by $J$ in the ring $R/I$.

We recall the polynomial ring $\F[x_1,\hdots,x_n]$ is Noetherian. This means for any ideal $I$ of $\F[x_1,\hdots,x_n]$ we can find finitely many polynomials $f_1,\hdots,f_k\in I$ such that $I=\langle f_1,\hdots,f_k\rangle=\langle f_1\rangle +\hdots+\langle f_k\rangle$. Given an ideal $I$ of $\F[x_1,\hdots,x_n]$, the set $\V_\F(I)$ defined by $I$ is the subset of $\F^n$ on which all polynomials in $I$ vanish. This set may be empty. Given a finite set of points $S$ we define $\I_\F(S)$ as the ideal of polynomials in $\F[x_1,\hdots,x_n]$ which vanish on $S$. We write $\V_\F(\langle f_1,\hdots,f_k\rangle)$ as $\V_\F(f_1,\hdots,f_k)$. It is easy to check that $\V_\F(I+J)=\V_\F(I)\cap \V_\F(J)$. 

A polynomial $f\in \F[x_1,\hdots,x_n]$ of degree $d$ is said to be {\em homogenous} if it only consists of degree $d$ monomials. An ideal $I$ of $\F[x_1,\hdots,x_n]$ is said to be {\em homogenous} if it can be generated by a set of homogenous polynomials. An ideal $I$ of $\F[x_1,\hdots,x_n]$ is said to be {\em monomial} if it can be generated by a set of monomials. We note, if a polynomial $f$ belongs to a monomial ideal $I$ then all its monomials also belong to $I$.
\subsection{$\F$-algebras}\label{prelim:Falgebras}

\begin{define}[$\F$-algebras] A {\em finitely generated $\F$-algebra} $R$, is a ring $R$ of the form $\F[x_1,\hdots,x_n]/I$ where $I$ is an ideal of $\F[x_1,\hdots,x_n]$. We will omit the words ``finitely generated" from now on as all our algebras will be finitely generated.
\end{define}

For a $\F$-vector space $V$, we use $\text{dim}_\F V$ to represent its dimension. Finite dimensional (as a $\F$-vector space) $\F$-algebras can be used to capture certain geometric properties of a finite set of points in $\F^n$. Note, from now on whenever we talk about the dimension of an $\F$-algebra we mean its vector space dimension and not its Krull dimension (the dimension of the corresponding variety, which is always zero in our setting).

\begin{define}[Algebras from Point sets]
Given a finite set $S\subseteq \F^n$, we define $\text{Alg}(S)$ to be the $\F$-algebra $\F[x_1,\hdots,x_n]/\I_\F(S)$.
\end{define}



In dimension $1$ the picture is simple. For example take the point set $S=\{0,1\}\subseteq \F$. We see any polynomial in $\F[x]$ which vanishes on $S$ belongs to the ideal $\langle x(x-1)\rangle$. Therefore, $\text{Alg}(S)=\F[x]/\langle x(x-1)\rangle$. Evaluating polynomials at $0$ and $1$ produces an isomorphism of vector spaces from $\F[x]$ to  $\F^2$. This shows $\text{Alg}(S)$ is of dimension $2$. Take a polynomial $f(x)$ such that $f(0)=0$ and $f(1)\ne 0$. Using the isomorphism induced by the evaluation map $\F[x]\rightarrow \F^2$ we have $\text{Alg}(S)/\langle f(x)\rangle$ is isomorphic to $\F$ via the map which evaluates polynomials at $x=0$. This shows $\text{Alg}(S)/\langle f(x)\rangle=\text{Alg}(S\cap \V(f))=\text{Alg}(\{0\})$. Fortunately, this picture holds true in general.

\begin{prop}[Geometry of Algebras]\label{point2algebra}
Given a finite set $S$ of points in $\F^n$, the $\F$-algebra, $\text{Alg}(S)=\F[x_1,\hdots,x_n]/\I_\F(S)$ satisfies the following properties:
\begin{enumerate}
    \item $\text{Alg}(S)$ is an $\F$-vector space of dimension $|S|$, that is $\text{dim}_\F \text{Alg}(S)=|S|$.
    \item For an ideal $J\subseteq \F[x_1,\hdots,x_n]$, $\text{Alg}(S)/J$ equals $\text{Alg}(S\cap \V_\F(J))$ and hence is of dimension $|S\cap \V_\F(J)|$.
\end{enumerate}
\end{prop}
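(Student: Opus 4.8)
The plan is to prove the two parts of Proposition~\ref{point2algebra} using the evaluation map, generalizing the one-dimensional example given just above the statement. First I would set up the \emph{evaluation homomorphism} $\mathrm{ev}_S \colon \F[x_1,\hdots,x_n] \to \F^{S}$ sending a polynomial $f$ to the tuple $(f(s))_{s \in S}$, where $\F^S$ denotes the ring of $\F$-valued functions on $S$ with pointwise operations. This is an $\F$-algebra homomorphism, and by definition its kernel is exactly $\I_\F(S)$. So for part~1 it suffices to show $\mathrm{ev}_S$ is \emph{surjective}: then $\text{Alg}(S) = \F[x_1,\hdots,x_n]/\I_\F(S) \cong \F^S$ as $\F$-vector spaces, which has dimension $|S|$. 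Surjectivity is a Lagrange-interpolation argument: for each point $s = (s_1,\hdots,s_n) \in S$ and each other point $t \in S$ with $t \neq s$, pick a coordinate $i$ with $t_i \neq s_i$ and form the linear factor $(x_i - t_i)/(s_i - t_i)$; the product of these over all $t \neq s$ is a polynomial $e_s$ with $e_s(s) = 1$ and $e_s(t) = 0$ for $t \neq s$. The $e_s$ map to the standard basis of $\F^S$, so $\mathrm{ev}_S$ is onto.

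For part~2, the idea is to chase the isomorphism through. Using the general ring-theoretic fact recalled in Section~\ref{prelim:RingsIdeals}, $\text{Alg}(S)/J = \big(\F[x_1,\hdots,x_n]/\I_\F(S)\big)/J = \F[x_1,\hdots,x_n]/(\I_\F(S) + J)$. On the other hand, $\text{Alg}(S \cap \V_\F(J)) = \F[x_1,\hdots,x_n]/\I_\F(S \cap \V_\F(J))$. So it suffices to prove the ideal identity
\begin{equation*}
\I_\F(S) + J = \I_\F(S \cap \V_\F(J)).
\end{equation*}
The inclusion ``$\subseteq$'' is immediate: every element of $\I_\F(S)$ vanishes on $S \supseteq S \cap \V_\F(J)$, and every element of $J$ vanishes on $\V_\F(J) \supseteq S \cap \V_\F(J)$. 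For ``$\supseteq$'', take $g \in \I_\F(S \cap \V_\F(J))$. Using the idempotents $e_s$ from part~1, write $g' = \sum_{s \in S} g(s)\, e_s$, which agrees with $g$ on all of $S$, so $g - g' \in \I_\F(S)$; it then remains to show $g' \in \I_\F(S) + J$, and since the $e_s$ with $s \notin \V_\F(J)$ can be absorbed appropriately it is enough to check that $e_s \in J$ whenever $s \in S \setminus \V_\F(J)$. That holds because if $s \notin \V_\F(J)$ there is some $h \in J$ with $h(s) \neq 0$, and then $h \cdot e_s / h(s)$ is an element of $J$ agreeing with $e_s$ on all of $S$ (it is $1$ at $s$, $0$ elsewhere on $S$), so $e_s$ and this element differ by something in $\I_\F(S)$, giving $e_s \in \I_\F(S) + J$. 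Combining, $g' \in \I_\F(S) + J$, hence $g \in \I_\F(S)+J$. The dimension count $|S \cap \V_\F(J)|$ then follows by applying part~1 to the finite set $S \cap \V_\F(J)$.

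The main obstacle, such as it is, is purely bookkeeping in part~2: one must be careful that the interpolation idempotents $e_s$ are built relative to the \emph{ambient} set $S$ (so that they are $0$ at every other point of $S$, not merely at points of $S \cap \V_\F(J)$), and that the decomposition $g = \sum_{s} g(s) e_s \pmod{\I_\F(S)}$ is used consistently. Once the identity $\I_\F(S) + J = \I_\F(S \cap \V_\F(J))$ is established, everything else is formal. I would present the argument by first proving surjectivity of $\mathrm{ev}_S$ (constructing the $e_s$ explicitly), deducing part~1, and then using the same $e_s$ to prove the ideal identity and hence part~2.
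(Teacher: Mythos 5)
Your proof is correct and takes essentially the same approach as the paper: the evaluation map onto $\F^{S}$ with kernel $\I_\F(S)$, interpolation idempotents for surjectivity, and for part~2 the same device of multiplying the idempotent at a point $s\notin\V_\F(J)$ by an element of $J$ not vanishing there (you package this as the ideal identity $\I_\F(S)+J=\I_\F(S\cap\V_\F(J))$, while the paper computes the image of $J+\I_\F(S)$ under the evaluation isomorphism). One tiny wording slip: mid-argument you say it suffices that $e_s\in J$, whereas what you actually, and correctly, establish is $e_s\in\I_\F(S)+J$.
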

\begin{proof}
We write $\I_\F(S)$ as $I$ in this proof. For every point $b\in S$ we can define the map $\text{Eval}_b:\F[x_1,\hdots,x_n]\rightarrow \F$ which simply evaluates a polynomial at the point $b$. This map is linear. This map is also the same as the quotient map $\text{Eval}_b:\F[x_1,\hdots,x_n]\rightarrow \F[X]/\langle X-b\rangle$ where $X=(x_1,\hdots,x_n)$ and $\langle X-b\rangle=\langle x_1-b_1,\hdots,x_n-b_n\rangle$. 

Combining the $|S|$ evaluation maps together for each point in $S$ we have the linear map $\text{Eval}_S:\F[x_1,\hdots,x_n]\rightarrow \bigoplus_{b\in S} \F[X]/\langle X-b\rangle \cong  \F^{|S|}$. $R=\bigoplus_{b\in S} \F[X]/\langle X-b\rangle$ is a ring with a unit. It is easy to check that $\text{Eval}_S(fg)=\text{Eval}_S(f)\text{Eval}_S(g)$ and $\text{Eval}_S(1)=1$. This shows the map is a ring homomorphism. The kernel of this map is precisely going to be the ideal $I$ of polynomials vanishing on $S$. This means $\text{Eval}_S$ factors through an injective map $\phi$ from $\text{Alg}(S)=\F[x_1,\hdots,x_n]/I$ to $R\cong \F^{|S|}$ and the quotient map $\F[x_1,\hdots,x_n]\rightarrow \text{Alg}(S)$. This proves that $\text{Alg}(S)$ is finite dimensional.\\
\begin{center}\begin{tikzcd}
\F[x_1,\hdots,x_n]\arrow[rd]\arrow[r, "\text{Eval}_S"] & R=\bigoplus_{b\in S} \F[X]/\langle X-b\rangle \cong \F^{|S|} \\
& \text{Alg}(S) \arrow[u, "\phi"]
\end{tikzcd}\end{center}
By construction $\phi$ maps any element of $\text{Alg}(S)$ to its evaluation over $S$.

For all points $a\in S$ if we can find polynomials $f_a$ which vanish on $|S|\setminus\{a\}$ but not on $a$ using interpolation. This implies $\phi$ is a surjective map. This is the case because $f_a$ will map to a basis of $\F^{|S|}$. This would prove $\text{Alg}(S)$ is isomorphic to $\F^{|S|}$ as a vector space, via the map $\phi$. Hence, $\text{Alg}(S)$ is $|S|$ dimensional. 



We claim $\phi(J+I)$ will correspond exactly to $\bigoplus_{a\in S\setminus \V_\F(J)}\F[X]/\langle X-a\rangle$. As each polynomial in $J$ vanishes on $\V_\F(J)$ it follows $\phi(J+I)\subseteq \bigoplus_{a\in S\setminus \V_\F(J)}\F[X]/\langle X-a\rangle$. For each $a\in S\setminus \V_\F(J)$, we can find a polynomial $f_a$ which vanishes on $S\setminus \{a\}$ and $f_a(a)=1$, using interpolation. We can also find a polynomial $g_a\in J$ such that it does not vanish on $a$. $f_ag_a$ is then an element of $J$ and the span of $\phi(f_ag_a)$ is precisely $\bigoplus_{a\in S\setminus \V_\F(J)}\F[X]/\langle X-a\rangle$. This implies $\phi$ induces an isomorphism between $\text{Alg}(S)/J=\text{Alg}(S)/((J+I)/I)$ and $\bigoplus_{a\in S\cap \V_\F(J)}\F[X]/\langle X-a\rangle$ via evaluation of polynomials on the set $S\cap \V_\F(J)$. This proves $\text{Alg}(S)/J$ is isomorphic to $\text{Alg}(S\cap \V_\F(J))$.
\end{proof}

The previous proposition shows that for a finite set $S$, the finite dimensional $\F$-algebra $\text{Alg}(S)$ captures a number of geometric properties of $S$. In particular, the size of $S$ and the size of its intersections with varieties is captured. Not all finite dimensional $\F$-algebras need to be produced from a finite set of points like in the previous proposition. For example, the ring $\F[x_1,\hdots,x_n]/\langle x_1,\hdots,x_n\rangle^d,d>1$ is a finite dimensional algebra but contains lots of non-zero nillpotent elements while it is easy to check that an algebra produced by a finite set will have none. As $d$ varies we get different rings but all of them only have one maximal ideal $\langle x_1,\hdots,x_n\rangle$. Because we particularly care about subspaces we make the following definition.

\begin{define}[Algebra-subspace intersection]
Given an affine subspace $W\subseteq \F^n$ and a finite dimensional $\F$-algebra $R=\F[x_1,\hdots,x_n]/I$, we define {\em $R\sqcap W$} to be $R/\I_\F(W)$.
\end{define}

The ideal $\I_\F(W)$ is generated by any set of $n-k$ degree $1$ equations of hyperplanes in $\F^n$ whose intersection defines the $k$-dimensional affine subspace $W$. Now we can generalize the Furstenberg problem to this setting. 

\begin{define}[$(R,m)$-rich subspaces]
Given a finite dimensional $\F$-algebra $R=\F[x_1,\hdots,x_n]/I$ an affine subspace $W$ is said to be {\em $(R,m)$-rich} if $\text{dim}_\F R\sqcap W \ge m$.
\end{define}

\begin{define}[Furstenberg Algebras]\label{def:furstenbergAlgebra}
A finite dimensional $\F$-algebra $R=\F[x_1,\hdots,x_n]/I$ is said to be {\em $(k,m)$-Furstenberg}, if for any $k$-dimensional linear subspace $V$, some translate of $V$ is $(R,m)$-rich.
\end{define}

This definition is useful because of the following simple corollary of Proposition \ref{point2algebra}.

\begin{cor}[Furstenberg Sets to Algebras]\label{FrustSet2Alg}
Given a finite $(k,m)$-Furstenberg set $S\subseteq \F^n$, $\text{Alg}(S)$ is a $\F$-vector space of dimension $|S|$ and a $(k,m)$-Furstenberg Algebra.
\end{cor}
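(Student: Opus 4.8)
The plan is to read off both assertions from Proposition~\ref{point2algebra}. The first assertion — that $\text{Alg}(S)$ is an $\F$-vector space of dimension $|S|$ — is literally part~1 of that proposition, so there is nothing to prove there.

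For the Furstenberg property, I would fix an arbitrary $k$-dimensional linear subspace $V \subseteq \F^n$. Since $S$ is $(k,m)$-Furstenberg, some translate $W$ of $V$ is $(S,m)$-rich, i.e.\ $|S \cap W| \ge m$; the goal is to show this same $W$ is $(\text{Alg}(S),m)$-rich, i.e.\ $\text{dim}_\F\big(\text{Alg}(S) \sqcap W\big) \ge m$. Unwinding the definition of $\sqcap$, we have $\text{Alg}(S) \sqcap W = \text{Alg}(S)/\I_\F(W)$, so part~2 of Proposition~\ref{point2algebra} applied with the ideal $J = \I_\F(W)$ gives
\[
\text{Alg}(S) \sqcap W = \text{Alg}\big(S \cap \V_\F(\I_\F(W))\big)
\quad\text{and}\quad
\text{dim}_\F\big(\text{Alg}(S) \sqcap W\big) = \big|S \cap \V_\F(\I_\F(W))\big|.
\]
The remaining step is to identify $\V_\F(\I_\F(W))$ with $W$ itself: since $W$ is an affine subspace, $\I_\F(W)$ contains the $n-k$ linearly independent degree-$1$ forms cutting out $W$, so $\V_\F(\I_\F(W)) \subseteq W$, while the reverse inclusion is immediate because every polynomial in $\I_\F(W)$ vanishes on $W$ by definition. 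Hence $\text{dim}_\F\big(\text{Alg}(S) \sqcap W\big) = |S \cap W| \ge m$, and since $V$ was arbitrary, $\text{Alg}(S)$ is a $(k,m)$-Furstenberg algebra.

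I do not expect any genuine obstacle here. The only point that deserves a moment's care is the equality $\V_\F(\I_\F(W)) = W$, which is where the linear-algebraic nature of $W$ enters (a general finite point set $S'$ satisfies only $S' \subseteq \V_\F(\I_\F(S'))$, but an affine subspace is exactly recovered from its vanishing ideal because it is cut out by linear equations). Everything else is a mechanical unwinding of the definitions of $R \sqcap W$, of $(R,m)$-rich subspaces, and of Furstenberg algebras, combined with the two parts of Proposition~\ref{point2algebra}.
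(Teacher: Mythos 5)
Your proof is correct and is exactly the argument the paper has in mind: the paper states this corollary without proof as an immediate consequence of Proposition~\ref{point2algebra}, and your unwinding (part~1 for the dimension, part~2 with $J=\I_\F(W)$ plus the observation $\V_\F(\I_\F(W))=W$ for an affine subspace) is the intended justification. No gaps.
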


The previous corollary immediately shows that a lower bound for the dimension of Furstenberg Algebras can be lifted to produce a lower bound for the size of Furstenberg Sets.

We make similar definitions for the case of Hyper-Furstenberg sets.

\begin{define}[Hyper-Furstenberg Algebras]
Given a finite dimensional $\F$-algebra $R=\F[x_1,\hdots,x_n]/I$, a hypersurface $V_f$ defined as the zero of a polynomial $f\in \F[x_1,\hdots,x_n]$ is said to be {\em $(R,m)$-rich}, if $\text{dim}_\F \left(R/\langle f\rangle \right) \ge m$.

A finite dimensional $\F$-algebra $R=\F[x_1,\hdots,x_n]/I$ is said to be {\em $(m,d)$-Hyper-Furstenberg}, if for any hyperplane equation $h(x)=h_1x_1+\hdots+h_nx_n$ we can find a degree $d-1$ polynomial $g\in \F[x_1,\hdots,x_n]$ such that $h^d+g$ is $(R,m)$-rich.
\end{define}

Proposition \ref{point2algebra} immediately implies the following Corollary.

\begin{cor}[Hyper-Furstenberg Sets to Algebras]\label{cor:hyperFurstSet2alg}
Given a finite $(m,d)$-Hyper-Furstenberg set $S\subseteq \F^n$, $\text{Alg}(S)$ is a $\F$-vector space of dimension $|S|$ and a $(m,d)$-Hyper-Furstenberg Algebra.
\end{cor}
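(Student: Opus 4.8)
The plan is to deduce this directly from Proposition~\ref{point2algebra}, in exactly the same way Corollary~\ref{FrustSet2Alg} is obtained in the linear case. First, part~(1) of Proposition~\ref{point2algebra} immediately gives that $\text{Alg}(S) = \F[x_1,\hdots,x_n]/\I_\F(S)$ is an $\F$-vector space of dimension $|S|$, so the only thing left to verify is that $\text{Alg}(S)$ inherits the $(m,d)$-Hyper-Furstenberg property.

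For that I would fix an arbitrary hyperplane equation $h(x) = h_1x_1 + \hdots + h_nx_n$. Since $S$ is $(m,d)$-Hyper-Furstenberg as a set, there is a polynomial $g \in \F[x_1,\hdots,x_n]$ of degree at most $d-1$ such that the hypersurface $V_{h^d+g}$ is $(S,m)$-rich in the set-theoretic sense, i.e.\ $|S \cap \V_\F(h^d+g)| \ge m$. The goal is to show that $h^d+g$ is $(\text{Alg}(S),m)$-rich in the algebra sense, namely that $\text{dim}_\F\bigl(\text{Alg}(S)/\langle h^d+g\rangle\bigr) \ge m$. Applying part~(2) of Proposition~\ref{point2algebra} with the ideal $J = \langle h^d+g\rangle$ yields $\text{Alg}(S)/\langle h^d+g\rangle = \text{Alg}\bigl(S \cap \V_\F(h^d+g)\bigr)$, which by part~(1) has dimension $|S \cap \V_\F(h^d+g)| \ge m$. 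Hence $h^d+g$ is $(\text{Alg}(S),m)$-rich, and since $h$ was arbitrary, $\text{Alg}(S)$ is an $(m,d)$-Hyper-Furstenberg algebra.

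I do not anticipate any genuine obstacle. The only point requiring care is that "$(S,m)$-rich" for a hypersurface (in the definition of Hyper-Furstenberg sets) is phrased as containing $m$ points of $S$, whereas "$(R,m)$-rich" for a hypersurface (in the definition of Hyper-Furstenberg algebras) is phrased via the quotient dimension; Proposition~\ref{point2algebra}(2) is precisely the bridge between these two formulations, so the argument is a short bookkeeping exercise entirely parallel to Corollary~\ref{FrustSet2Alg}.
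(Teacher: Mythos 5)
Your proposal is correct and is exactly the argument the paper intends: the paper derives this corollary immediately from Proposition~\ref{point2algebra}, with part~(1) giving the dimension $|S|$ and part~(2), applied to $J=\langle h^d+g\rangle$, converting set-richness of the hypersurface into the quotient-dimension bound. Nothing further is needed.
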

\subsection{Graded Lexicographic order and the basis of standard monomials}\label{prelim:monoOrder}
To better understand a finite dimensional $\F$-algebra we would like a nice basis for it. We will now construct one using the graded lexicographic order over monomials. The arguments here and in the next section are part of a more general treatment of monomial orders which can be found in Chapter 15 of \cite{eisenbud1995commutative}.

Let $\Z_{\ge 0}^n$ be the set of lattice points in $n$-dimensional space with non-negative coordinates. For $i\in \Z_{\ge 0}^n$ we let $\text{wt}(i)=\sum_t i_t$ be the {\em weight} of $i$. A monomial $f$ over variables $x=(x_1,\hdots,x_n)$ can be equivalently represented by an element of $\lambda\in\Z_{\ge 0}^n$ by writing $f=x^\lambda=x_1^{\lambda_1}\hdots x_n^{\lambda_n}$. The weight of $\lambda$ is precisely the degree of $f$.

\begin{define}[Graded Lexicographic order]
The {\em graded lexicographic order} $<$ (abbreviated as grlex) is a total order over monomials in the variables $x_1,\hdots,x_n$. For two monomials $f=x^{\lambda}$ and $g=x^\mu$, $f<g$ if $\text{wt}(\lambda)<\text{wt}(\mu)$ or $\text{wt}(\lambda)=\text{wt}(\mu)$ and $\lambda_i<\mu_i$ for the first index $i$ with $\lambda_i\ne \mu_i$.
\end{define}

We state a few properties of the grlex order which we will use. The grlex order satisfies $x_1>x_2>\hdots>x_n$. It also refines the partial order induced by divisibility. In other words, given two monomials $f_1$ and $f_2$ such that $f_1$ divides $f_2$ we have $f_1<f_2$. Finally, we note grlex is a well ordering. In other words, any non-empty set of monomials will have a least element under this order.
  
A monomial multiplied by a scalar is called a {\em term}. We can use the grlex order to compare terms in $\F[x_1,\hdots,x_n]$ by ignoring scalars. Given any polynomial $f\in \F[x_1,\hdots,x_n]$, we define the {\em initial term} of $f$ written as $\text{in}(f)$ as the largest term which is a part of $f$ (this will be the largest monomial and its corresponding scalar). For a set $X$ of polynomials, we let $\text{in}(X)$ be the set of initial terms of the polynomials in $X$. Given an ideal $I$, we let $\text{in}(I)$ be the ideal generated by initial terms of the polynomials in $I$. As each of the initial terms is a monomial, we see that $\text{in}(I)$ is a monomial ideal. It is called the {\em initial ideal} of $I$. Given an $\F$-algebra $R=\F[x_1,\hdots,x_n]/I$ we let $\text{in}(R)=\F[x_1,\hdots,x_n]/\text{in}(I)$. We now construct the special basis we need.

\begin{define}[Standard Monomials]\label{standardMono}
Given an $\F$-algebra $R=\F[x_1,\hdots,x_n]/I$, the set of monomials of $\F[x_1,\hdots,x_n]$ {\bf not in} $\text{in}(I)$ are called the {\em standard monomials} of $R$ and written as $\text{Std}(R)$. 
\end{define}

\begin{thm}[Monomial basis]\label{ThMono}
Given a finite dimensional $\F$-algebra $R=\F[x_1,\hdots,x_n]/I$, the standard monomials $\text{Std}(R)$ of $R$ form a basis for $R$ as an $\F$-vector space.
\end{thm}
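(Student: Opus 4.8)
The plan is to prove that $\text{Std}(R)$ spans $R$ and is linearly independent, using the grlex order as the key tool.

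\medskip

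\emph{Spanning.} First I would show that every polynomial $f \in \F[x_1,\hdots,x_n]$ is congruent modulo $I$ to an $\F$-linear combination of standard monomials. Suppose not; then among all polynomials $f$ whose image in $R$ is not a combination of standard monomials, pick one whose initial monomial $\text{in}(f)$ is least in the grlex order (this exists since grlex is a well ordering). The initial monomial $\text{in}(f)$ cannot be a standard monomial: if it were, then $f - c\cdot\text{in}(f)$ (where $c$ is the leading scalar) has strictly smaller initial monomial, hence lies in the span of standard monomials modulo $I$ by minimality, and then so does $f$. So $\text{in}(f) \in \text{in}(I)$, meaning there is some $g \in I$ with $\text{in}(g)$ dividing $\text{in}(f)$; subtracting the appropriate term-multiple of $g$ from $f$ cancels the leading term and produces a polynomial with strictly smaller initial monomial that is congruent to $f$ modulo $I$, again contradicting minimality. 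Hence the standard monomials span $R$.

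\medskip

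\emph{Linear independence.} Suppose some nontrivial $\F$-linear combination $h = \sum_j c_j \u_j$ of distinct standard monomials $\u_j$ lies in $I$. Then $\text{in}(h)$ is one of the $\u_j$ (the grlex-largest one appearing with nonzero coefficient), so $\text{in}(h) \in \text{in}(I)$. But $\text{in}(h)$ is a standard monomial, i.e.\ by definition \emph{not} in $\text{in}(I)$ — contradiction. So the standard monomials are linearly independent in $R$.

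\medskip

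\emph{Finiteness check.} The two parts above already give that $\text{Std}(R)$ is a basis of $R$; the finite-dimensionality hypothesis then just says $\text{Std}(R)$ is a finite set (equivalently, $\text{in}(I)$ contains a power of each variable). I do not expect any real obstacle here — the only subtle point is making sure the well-ordering argument for spanning is phrased correctly (descending induction on the initial monomial, valid precisely because grlex is a well order and every reduction step strictly decreases $\text{in}(\cdot)$), and noting that the reduction process used in the spanning argument is exactly the division/normal-form algorithm with respect to a Gröbner basis of $I$, though we need not invoke that terminology. The main thing to be careful about is the edge case $f \in I$ (its image is $0$, the empty combination) and that constants behave correctly, but these are routine.
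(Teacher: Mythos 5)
Your proposal is correct and follows essentially the same route as the paper: linear independence via the observation that a nontrivial combination of standard monomials lying in $I$ would force a standard monomial into $\text{in}(I)$, and spanning via a minimal-counterexample argument on the initial monomial using that grlex is a well order (your two-case reduction is the same as the paper's, just organized slightly differently). The only step to state explicitly is the standard fact that a monomial lying in the monomial ideal $\text{in}(I)$ must be divisible by $\text{in}(g)$ for some $g\in I$, which the paper also uses implicitly.
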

\begin{proof}
Take monomials $g_1,g_2,..,g_k\in \text{Std}(R)$. We claim they are linearly independent in $R$. If they were linearly dependent then we could find $a_i\in \F$ such that $\sum_{i=1}^k a_ig_i=f \in I$. Hence, $\text{in}(f)\in \text{in}(I)$. As $\text{in}(f)$ will have to be one of $a_ig_i$ we obtain a contradiction.

Now suppose $\text{Std}(R)$ does not form a basis for $\F[x_1,\hdots,x_n]/I$. Consider the set of polynomials $X$ in $\F[x_1,\hdots,x_n]/I$ not spanned by $\text{Std}(R)$. We can pick the smallest term $h$ from $\text{in}(X)$. Pick a polynomial $f\in X$ such that $\text{in}(f)=h$. If $\text{in}(f)$ was in $\text{in}(I)$ we could find a polynomial $g\in I$ such that $\text{in}(f-g)<\text{in}(f)$. In $R$, $f-g$ is the same as $f$ and hence in $X$ but this would contradict the fact that $\text{in}(f)=h$ is the smallest term in $\text{in}(X)$. If $\text{in}(f)$ is not in $\text{in}(I)$ then we could find a monomial $m\in \text{Std}(R)$ and a scalar $a$ such that $\text{in}(f-ma)<\text{in}(f)$. Again, $f-ma\in X$ because $f$ is not spanned by the standard basis. We now have a contradiction as $\text{in}(f)$ is again not the smallest term in $\text{in}(X)$.
\end{proof}

\begin{cor}\label{InDimPres}
Given a finite dimensional $\F$-algebra $R=\F[x_1,\hdots,x_n]/I$, we have that,
$$\text{dim}_\F R=\text{dim}_\F \text{in}(R).$$
\end{cor}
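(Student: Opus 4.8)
The plan is to deduce this immediately from Theorem~\ref{ThMono} by observing that $R$ and $\text{in}(R)$ have literally the same set of standard monomials. The key point is that the initial ideal construction is idempotent on monomial ideals: since every generator of $\text{in}(I)$ is already a monomial (hence its own initial term), we have $\text{in}(\text{in}(I)) = \text{in}(I)$. Therefore the monomials of $\F[x_1,\hdots,x_n]$ not lying in $\text{in}(\text{in}(I))$ coincide with those not lying in $\text{in}(I)$, i.e.
$$\text{Std}(\text{in}(R)) = \text{Std}(R).$$

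The steps I would carry out, in order: (1) Note that $R$ is finite dimensional by hypothesis, so by Theorem~\ref{ThMono} the set $\text{Std}(R)$ is a finite basis of $R$; in particular $\dim_\F R = |\text{Std}(R)|$. (2) Record the idempotency $\text{in}(\text{in}(I)) = \text{in}(I)$ from the fact that $\text{in}(I)$ is generated by monomials, and conclude $\text{Std}(\text{in}(R)) = \text{Std}(R)$, which is finite. (3) Check that $\text{in}(R) = \F[x_1,\hdots,x_n]/\text{in}(I)$ is finite dimensional: the monomials of a monomial ideal $\text{in}(I)$ span $\text{in}(I)$, so the quotient $\text{in}(R)$ is spanned by the monomials \emph{not} in $\text{in}(I)$, namely $\text{Std}(\text{in}(R))$, a finite set. (4) Apply Theorem~\ref{ThMono} again, now to the finite dimensional algebra $\text{in}(R)$, to get $\dim_\F \text{in}(R) = |\text{Std}(\text{in}(R))|$. (5) Combine: $\dim_\F \text{in}(R) = |\text{Std}(\text{in}(R))| = |\text{Std}(R)| = \dim_\F R$.

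There is really no substantial obstacle here; the only thing that needs a word of care is step~(3), ensuring that $\text{in}(R)$ is itself finite dimensional so that Theorem~\ref{ThMono} is applicable to it — but this is immediate from the basic fact (already noted in the preliminaries, that a polynomial in a monomial ideal has all its monomials in the ideal) that a monomial ideal is spanned as a vector space by the monomials it contains, so its quotient is spanned by the complementary monomials. Everything else is bookkeeping about the definitions of $\text{in}(I)$, $\text{in}(R)$, and $\text{Std}(\cdot)$.
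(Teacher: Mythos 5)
Your proposal is correct and follows essentially the same route as the paper: the paper's proof is exactly the observation that $\text{Std}(R)=\text{Std}(\text{in}(R))$ (since $\text{in}(I)$ is a monomial ideal, so $\text{in}(\text{in}(I))=\text{in}(I)$) together with two applications of Theorem~\ref{ThMono}. Your extra care in step (3) about finite dimensionality of $\text{in}(R)$ is a reasonable bit of bookkeeping that the paper leaves implicit, but it does not change the argument.
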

\begin{proof}
As $\text{Std}(R)=\text{Std}(\text{in}(R))$, Theorem \ref{ThMono} shows that the same set of monomials form a basis for $R$ and $\text{in}(R)$.
\end{proof}

Using the initial ideal operation we get a nice algebra of the same dimension. We can preserve even more information by means of a different operation. Any polynomial $f$ can be written as $\sum_{d=0}^D f_d$ where $f_d$ is a homogenous polynomial of degree $d$, where $D$ is the degree of $f$. We let $\text{hd}(f)$ refer to $f_D$. For an ideal $I$ we also define $\text{hd}(I)$ as the ideal generated by $\text{hd}(f)$ for all $f\in I$. We see that $\text{hd}(I)$ is homogenous. Given an $\F$-algebra $R=\F[x_1,\hdots,x_n]/I$, we let $\text{hd}(R)=\F[x_1,\hdots,x_n]/\text{hd}(I)$. To prove properties about $\text{hd}(I)$ we use $\text{in}(I)$ and the following lemma connecting the two.

\begin{lem}\label{connectHDandIN}
Given an ideal $I$ of $\F[x_1,\hdots,x_n]$ we have,
$$\text{in}(\text{hd}(I))=\text{in}(I).$$
\end{lem}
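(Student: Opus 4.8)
The plan is to prove the two inclusions $\text{in}(I)\subseteq \text{in}(\text{hd}(I))$ and $\text{in}(\text{hd}(I))\subseteq \text{in}(I)$ separately, using throughout the single observation that grlex compares total degree first, so that for \emph{every} polynomial $f$ the initial term $\text{in}(f)$ lives in the top-degree homogeneous component of $f$; that is, $\text{in}(f)=\text{in}(\text{hd}(f))$. Since both sides of the claimed identity are monomial ideals (initial ideals always are), it suffices to check they contain the same monomials, and I would use the elementary fact that a monomial $x^\lambda$ lies in $\text{in}(J)$ if and only if $x^\lambda=\text{in}(g)$ for some $g\in J$ — if $x^\lambda$ is merely divisible by some $\text{in}(g)$, multiply $g$ by the appropriate monomial to realize $x^\lambda$ itself as an initial term.

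For $\text{in}(I)\subseteq \text{in}(\text{hd}(I))$: given a monomial $x^\lambda\in\text{in}(I)$, write $x^\lambda=\text{in}(f)$ with $f\in I$. Then $\text{hd}(f)\in\text{hd}(I)$ by definition of $\text{hd}(I)$, and $\text{in}(\text{hd}(f))=\text{in}(f)=x^\lambda$, so $x^\lambda\in\text{in}(\text{hd}(I))$.

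The reverse inclusion is where care is needed, because the generating set $\{\text{hd}(f):f\in I\}$ of $\text{hd}(I)$ need not be a Gröbner basis, so one cannot simply assert that $\text{in}(\text{hd}(I))$ is generated by the terms $\text{in}(\text{hd}(f))$. The fix is to exploit that $\text{hd}(I)$ is homogeneous (already noted in the excerpt). Take $x^\lambda\in\text{in}(\text{hd}(I))$, say $x^\lambda=\text{in}(g)$ with $g\in\text{hd}(I)$. Decomposing $g$ into homogeneous components and using homogeneity of $\text{hd}(I)$, each component lies in $\text{hd}(I)$; replacing $g$ by its top-degree component (which still contains $\text{in}(g)$ since grlex is graded) we may assume $g$ is homogeneous, of degree $D$ say. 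Write $g=\sum_i p_i\,\text{hd}(f_i)$ with $f_i\in I$; taking degree-$D$ homogeneous parts on both sides lets us assume each $p_i$ is homogeneous of degree $D-\deg f_i$. Set $h:=\sum_i p_i f_i\in I$. Each $p_i f_i$ has degree at most $D$ with degree-$D$ part equal to $p_i\,\text{hd}(f_i)$, so the degree-$D$ part of $h$ is $\sum_i p_i\,\text{hd}(f_i)=g\neq 0$, whence $\text{hd}(h)=g$. Therefore $\text{in}(h)=\text{in}(\text{hd}(h))=\text{in}(g)=x^\lambda$, so $x^\lambda\in\text{in}(I)$.

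I expect the only genuine obstacle to be this last inclusion: one must resist passing directly from generators of $\text{hd}(I)$ to generators of its initial ideal, and instead route through the homogeneity of $\text{hd}(I)$ to lift a homogeneous element of $\text{hd}(I)$ back to an element of $I$ with the same highest-degree part. Everything else — the graded nature of grlex giving $\text{in}(f)=\text{in}(\text{hd}(f))$, and the characterization of monomial membership in a monomial ideal — is routine.
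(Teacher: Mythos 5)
Your proof is correct and follows essentially the same route as the paper: the easy inclusion via $\text{in}(f)=\text{in}(\text{hd}(f))$, and for the reverse inclusion the same key lifting step of writing a (top-degree part of an) element $g\in\text{hd}(I)$ as $\sum_i p_i\,\text{hd}(f_i)$ with homogeneous $p_i$ of matching degree and passing to $h=\sum_i p_i f_i\in I$ with $\text{hd}(h)=g$, so that $\text{in}(g)=\text{in}(h)\in\text{in}(I)$. Your preliminary reductions (checking monomial membership and replacing $g$ by its top-degree component via homogeneity of $\text{hd}(I)$) are only cosmetic variations of the paper's direct extraction of $\text{hd}(g)$ from the generating expression.
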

\begin{proof}
In one direction, for any $f\in I$ we have $\text{hd}(f)\in \text{hd}(I)$ and $\text{in}(f)=\text{in}(\text{hd}(f))$. This implies $\text{in}(I)\subseteq \text{in}(\text{hd}(I))$. 

Now we prove the other inclusion. Take $g\in \text{hd}(I)$. It is of the form $\sum_i h_i\text{hd}(f_i)$ where $f_i\in I$ and $h_i$ are homogenous polynomials. As $\text{hd}(f_i)$ are homogenous and $h_i$ are homogenous, $h_i\text{hd}(f_i)$ is also homogenous. The sum $\sum_i h_i\text{hd}(f_i)$ can then be split into parts with the same degree. This means $\text{hd}(g)$ will be of the form $\sum_i h'_i\text{hd}(f'_{i})$ with $h'_i$ homogenous and $f'_i\in I$. In fact we have,
$$hd(g)=\sum_i h'_i\text{hd}(f_i)=\sum_i \text{hd}(h'_if_i)= \text{hd}\left(\sum_i h'_if_i\right)$$ 
as $h'_i$ are homogenous. We finally note,
$$\text{in}(g)=\text{in}(\text{hd}(g))=\text{in}\left(\text{hd}\left(\sum_i h'_if_i\right)\right)=\text{in}\left(\sum_i h'_if_i\right)$$
But $\sum_i h'_if_i\in I$ which implies $\text{in}(I)\supseteq \text{in}(\text{hd}(I))$. 
\end{proof}
We now prove a lemma proved in \cite{ellenberg2016furstenberg} using alternate elementary arguments. The original proof uses algebraic geometric arguments and properties of flat families.
\begin{lem}\label{dilationLem}
Given a finite dimensional $\F$-algebra, $R=\F[x_1,\hdots,x_n]/I$, we have the following:
\begin{enumerate}
    \item $\text{dim}_\F R= \text{dim}_\F \text{hd}(R)$.
    \item For an ideal $J$, we have $\text{dim}_\F \left( R/J \right) \le \text{dim}_\F \left( \text{hd}(R)/\text{hd}(J) \right).$
\end{enumerate}
\end{lem}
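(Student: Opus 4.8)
The plan is to deduce both parts from the already-established machinery relating $\text{in}$, $\text{hd}$, and vector-space dimension. For part (1), I would chain together Corollary \ref{InDimPres} and Lemma \ref{connectHDandIN}: we have $\text{dim}_\F R = \text{dim}_\F \text{in}(R) = \text{dim}_\F \F[x_1,\hdots,x_n]/\text{in}(I)$, and likewise $\text{dim}_\F \text{hd}(R) = \text{dim}_\F \F[x_1,\hdots,x_n]/\text{in}(\text{hd}(I))$. Since Lemma \ref{connectHDandIN} gives $\text{in}(\text{hd}(I)) = \text{in}(I)$, the two quotients are literally the same ring, so their dimensions agree. (One should note in passing that $\text{hd}(R)$ is again finite dimensional, which follows from this very equality together with finite-dimensionality of $R$ via Theorem \ref{ThMono}, so that Corollary \ref{InDimPres} legitimately applies.)

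For part (2), the cleanest route is to pass to the algebra $R/J = \F[x_1,\hdots,x_n]/(I+J)$ and compare with $\text{hd}(R)/\text{hd}(J) = \F[x_1,\hdots,x_n]/(\text{hd}(I) + \text{hd}(J))$. By part (1) applied to the ideal $I+J$, we have $\text{dim}_\F(R/J) = \text{dim}_\F \F[x_1,\hdots,x_n]/\text{hd}(I+J)$, so it suffices to prove the \emph{ideal containment}
$$\text{hd}(I) + \text{hd}(J) \subseteq \text{hd}(I+J),$$
since a surjection of quotient rings (from the smaller ideal's quotient onto the larger ideal's quotient) can only decrease dimension. This containment is immediate: if $f \in I$ then $f \in I+J$, so $\text{hd}(f) \in \text{hd}(I+J)$, giving $\text{hd}(I) \subseteq \text{hd}(I+J)$, and symmetrically $\text{hd}(J) \subseteq \text{hd}(I+J)$; since $\text{hd}(I+J)$ is an ideal it contains the ideal sum. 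Then $\F[x_1,\hdots,x_n]/(\text{hd}(I)+\text{hd}(J))$ surjects onto $\F[x_1,\hdots,x_n]/\text{hd}(I+J)$, so $\text{dim}_\F \text{hd}(R)/\text{hd}(J) = \text{dim}_\F \F[x_1,\hdots,x_n]/(\text{hd}(I)+\text{hd}(J)) \ge \text{dim}_\F \F[x_1,\hdots,x_n]/\text{hd}(I+J) = \text{dim}_\F(R/J)$.

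The main subtlety — and the reason part (2) is an inequality rather than an equality — is precisely that the containment $\text{hd}(I)+\text{hd}(J) \subseteq \text{hd}(I+J)$ can be strict: taking highest-degree parts of a sum $f+g$ with $f \in I$, $g \in J$ can produce cancellation of the top-degree terms, yielding elements of $\text{hd}(I+J)$ not visible from $\text{hd}(I)$ and $\text{hd}(J)$ separately. So the only real work is being careful that (a) $\text{hd}(I+J)$ is genuinely an ideal (it is, being an $\text{hd}$ of an ideal, as noted in the text) and (b) the quotient $\text{hd}(R)/\text{hd}(J)$ is interpreted via the convention $(R/I)/J = R/(I+J)$ from Section \ref{prelim:RingsIdeals}, unwound one level further so that $\text{hd}(R)/\text{hd}(J) = \F[x_1,\hdots,x_n]/(\text{hd}(I)+\text{hd}(J))$. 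Both are routine; no genuine obstacle remains once part (1) is in hand.
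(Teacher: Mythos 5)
Your proposal is correct and follows essentially the same route as the paper: part (1) via Corollary \ref{InDimPres} together with Lemma \ref{connectHDandIN} (since $\text{in}(\text{hd}(I))=\text{in}(I)$), and part (2) by applying part (1) to the ideal $I+J$ and using the containment $\text{hd}(I)+\text{hd}(J)\subseteq\text{hd}(I+J)$, which makes the quotient by the smaller ideal at least as large in dimension. Your added remark about possible cancellation making the containment strict correctly explains why the statement is only an inequality.
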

\begin{proof}
Using Corollary \ref{InDimPres} and Lemma \ref{connectHDandIN}, we have 

\begin{align*}
    \text{dim}_\F R&=\text{dim}_\F \text{in}(R)=\text{dim}_\F \left(\F[x_1,\hdots,x_n]/\text{in}(I)\right)\\
    &=\text{dim}_\F\left(\F[x_1,\hdots,x_n]/\text{in}(\text{hd}(I))\right)=\text{dim}_\F\text{in}(\text{hd}(R))=\text{dim}_\F\text{hd}(R).
\end{align*}
Recall for ideals $K_1$ and $K_2$, $(R/K_1)/K_2=R/(K_1+K_2)$. Given an ideal $J$, using the first claim of this lemma we have, 
\begin{align*}
    \text{dim}_\F \left( R/J\right)&=\text{dim}_\F \left(\F[x_1,\hdots,x_n]/(I+J)\right)\\ &=\text{dim}_\F \left(\F[x_1,\hdots,x_n]/\text{hd}(I+J)\right)\\
    &\le \text{dim}_\F \left(\F[x_1,\hdots,x_n]/(\text{hd}(I)+\text{hd}(J))\right)\\
    &=\text{dim}_\F \left(\text{hd}(R)/\text{hd}(J)\right).
\end{align*} 
The inequality follows from the fact that $(\text{hd}(I)+\text{hd}(J))\subseteq \text{hd}(I+J)$.
\end{proof}

\subsection{Generic Initial Ideals}\label{prelim:genericInitial}
The theorem in this section will only be used in Section \ref{sec:IdealZero}. Every ideal has a ``canonical" initial ideal associated with it which is invariant under the action of the Borel group, that is the group of upper triangular invertible matrices. In this section we will make this statement precise. First, we need to define the Borel group and its action on polynomials.

The {\em Borel group} $\B(n,\F)$ is the group of $n\times n$ upper-triangular invertible matrices over the field $\F$. Given an element $g\in \B(n,\F)$, we define its action over a polynomial $f\in \F[x_1,\hdots,x_n]$ as
$$\leftidx{^g}{{\!f}}{}(x)=f(xg),$$
where $xg$ is the product of the matrix $g$ with the row vector $x=(x_1,\hdots,x_n)$. Given an ideal $I$, $\leftidx{^g}{{\!I}}{}$ refers to the ideal generated by $\leftidx{^g}{{\!f}}{}$ for all polynomials $f\in I$. $\B(n,\F)$ can be identified with a subset of $\F^{n(n-1)/2}$ described using $n(n-1)/2$ indeterminates $b_{ij},1\le i\le j\le n$ corresponding to the non-zero entries in the upper triangular matrix. 
The following theorem is a standard result from commutative algebra (see e.g., Chapter 15 in  \cite{eisenbud1995commutative}). For the sake of completeness we include a somewhat simplified proof in  Appendix~\ref{apx:GIN}.

\begin{thm}[Generic Initial Ideals]\label{genericInitialThm}
Given an infinite field $\F$ and $I$ a homogenous ideal of $\F[x_1,\hdots,x_n]$, there exists a monomial ideal $\text{GIN}(I)$ called the {\em generic initial ideal} of $I$ with the following properties:
\begin{enumerate}
    \item There exists a non-zero polynomial $q$ in the indeterminates $b_{ij},1\le i\le j\le n$, such that for any $g\in \B(n,\F)$ for which $q(g)\ne 0$ we have $\text{in}(\leftidx{^g}{{\!I}}{})=\text{GIN}(I)$.
    \item The ideal $\text{GIN}(I)$ is stable under the action of the Borel group. That is, given any element $g\in \B(n,\F)$  we have $\leftidx{^g}{{\!\text{GIN}(I)}}{}=\text{GIN}(I)$.
\end{enumerate}
\end{thm}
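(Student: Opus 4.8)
The plan is to work over the field of rational functions $\K = \F(b_{ij} : 1 \le i \le j \le n)$, where the $b_{ij}$ are the $n(n-1)/2$ indeterminates coordinatizing the Borel group. Let $\beta \in \B(n,\K)$ denote the "generic" upper-triangular matrix whose above-diagonal entries are the indeterminates $b_{ij}$ (and with $1$'s on the diagonal, which suffices since scaling rows doesn't change initial ideals of homogeneous components). First I would consider the ideal $\leftidx{^\beta}{\!I_\K}{}$ obtained by extending scalars from $\F$ to $\K$ and applying $\beta$, and define a candidate monomial ideal $M := \text{in}(\leftidx{^\beta}{\!I_\K}{})$ using the grlex order. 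Since initial ideals are monomial ideals and monomials have coefficients in the prime field, $M$ is actually generated by monomials with $\F$-coefficients, so it makes sense to regard $M$ as a monomial ideal in $\F[x_1,\hdots,x_n]$; this will be our $\text{GIN}(I)$.

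For part (1), the key step is a specialization/semicontinuity argument. Fix a Gröbner basis $f_1,\hdots,f_s$ of $\leftidx{^\beta}{\!I_\K}{}$ with respect to grlex; clearing denominators we may assume each $f_i \in \F[b_{ij}][x_1,\hdots,x_n]$, and each $f_i$ has a well-defined initial term $c_i(b) x^{\alpha_i}$ with $c_i$ a nonzero polynomial in the $b_{ij}$. Let $q_0 = \prod_i c_i$. The Gröbner-basis property is witnessed by finitely many polynomial identities (the $S$-polynomial reductions $S(f_i,f_j) = \sum_k h_{ijk} f_k$ with remainder $0$); clearing denominators again, these identities hold over $\F[b_{ij}]$ after multiplying by some nonzero polynomial, call it $q_1$. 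Set $q = q_0 q_1$. Then for any $g \in \B(n,\F)$ with $q(g) \ne 0$, specializing $b_{ij} \mapsto g_{ij}$ keeps all the $f_i$ nonzero with unchanged leading monomials, and preserves the reduction identities, so the specialized $f_i(g)$ form a Gröbner basis of $\leftidx{^g}{\!I}{}$ with $\text{in}(\leftidx{^g}{\!I}{}) = \langle x^{\alpha_1},\hdots,x^{\alpha_s}\rangle = M = \text{GIN}(I)$. (One also needs that $\leftidx{^g}{\!I}{}$ for such $g$ really is the specialization of $\leftidx{^\beta}{\!I_\K}{}$, i.e.\ that forming the ideal commutes with this specialization away from $q=0$; this follows because a generating set of $I$ maps to a generating set, and membership/reduction is again witnessed by denominator-clearable identities, so we fold any further necessary factors into $q$.)

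For part (2), Borel-stability, the idea is that the generic matrix "absorbs" any fixed Borel element. Given $g \in \B(n,\F)$, the map $\beta \mapsto \beta g$ is (up to a polynomial change of the indeterminates $b_{ij}$, which is a dominant map since $\B$ is a group) just a reparametrization of the generic Borel matrix; more precisely $\leftidx{^g}{\!(\leftidx{^\beta}{\!I_\K}{})}{} = \leftidx{^{\beta g}}{\!I_\K}{}$, and since $\beta g$ is again a Borel matrix with entries generic over $\F$ (the entries of $\beta g$ are $\F$-algebraically independent because $g$ is invertible), its initial ideal is computed by the same generic recipe and equals $M$. Hence $\leftidx{^g}{\!M}{} = \text{in}(\leftidx{^g}{\!(\leftidx{^\beta}{\!I_\K}{})}{}) = \text{in}(\leftidx{^{\beta g}}{\!I_\K}{}) = M$. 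Here one uses that $M$ is a monomial ideal, so $\leftidx{^g}{\!M}{}$ — a priori generated by the polynomials $\leftidx{^g}{\!(x^\alpha)}{}$ — has initial ideal equal to $M$ again (the leading term of $\leftidx{^g}{\!(x^\alpha)}{} = (xg)^\alpha$ under grlex is $x^\alpha$ itself since $g$ is upper triangular); so $\leftidx{^g}{\!M}{}$ and $M$ have the same Hilbert function and $\leftidx{^g}{\!M}{} \subseteq$ something forcing equality. Alternatively, and more cleanly, run the part-(1) argument for the ideal $\leftidx{^g}{\!I}{}$: its GIN is computed by the generic Borel matrix applied to $\leftidx{^g}{\!I}{}$, i.e.\ by $\beta g$ applied to $I$, which is the same generic computation, so $\text{GIN}(\leftidx{^g}{\!I}{}) = \text{GIN}(I)$; but also directly $\text{GIN}(\leftidx{^g}{\!I}{}) = \leftidx{^g}{\!\text{GIN}(I)}{}$ is forced by uniqueness, giving the claim.

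The main obstacle I anticipate is the bookkeeping in part (1): carefully producing the single polynomial $q$ that simultaneously controls (i) non-vanishing of all leading coefficients, (ii) the Gröbner-basis reduction certificates, and (iii) the commutation of "take the ideal generated by" with specialization. Each of these is a statement that "a finite list of polynomial identities in the $b_{ij}$ holds identically, hence holds after any specialization avoiding the denominators," but assembling them into one clean statement — and being honest that the generic Gröbner basis exists in the first place (Buchberger's algorithm over the field $\K$ terminates) — is where the real work sits. The Borel-stability in part (2) is then essentially formal once part (1) is set up with the generic-matrix viewpoint, since it reduces to the observation that $\beta g$ is "as generic as" $\beta$.
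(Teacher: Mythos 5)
Your part (1) is viable and takes a genuinely different route from the paper: you define $\text{GIN}(I)$ as the initial ideal of the generic translate $\leftidx{^\beta}{\!I}{}$ over the rational function field $\K=\F(b_{ij})$ and control specialization via a Gr\"obner basis with cleared denominators, whereas the appendix argues degree by degree, writing the generic Borel action on $I_d$ as a matrix $H_d(b)$ in the monomial basis, taking $q_d$ to be the determinant of the submatrix indexed by the grlex-largest tuple $m_1>\cdots>m_t$ with nonvanishing minor, and setting $q=\prod_{d\le l}q_d$ for $l$ bounding the degrees of generators. The bookkeeping you flag (leading coefficients, $S$-polynomial certificates, compatibility of specialization with generation) is routine and could be completed.

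The genuine gap is part (2), which is the substantive half of the theorem, and neither of your two arguments for it works. The chain $\leftidx{^g}{\!M}{}=\text{in}(\leftidx{^g}{\!(\leftidx{^\beta}{\!I_\K}{})}{})$ silently assumes that the Borel action commutes with taking initial ideals; this is false in general (if it were true, $\text{in}(I)$ would already be the generic initial ideal and the whole construction would be pointless). The patch you offer is also false under the paper's conventions: for upper-triangular $g$ and grlex with $x_1>\cdots>x_n$, the polynomial $\leftidx{^g}{\!(x^\alpha)}{}$ is supported on monomials $\geq x^\alpha$, so its leading monomial is in general strictly \emph{larger} than $x^\alpha$ (e.g.\ $\leftidx{^g}{\!x_2}{}=x_1+x_2$ has leading term $x_1$), not equal to it; and even granting $\text{in}(\leftidx{^g}{\!M}{})=M$, this does not imply $\leftidx{^g}{\!M}{}=M$ --- your ``$\subseteq$ something forcing equality'' is exactly the missing step (Hilbert functions agree for trivial reasons and give nothing). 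The ``cleaner'' alternative is circular: $\text{GIN}(\leftidx{^g}{\!I}{})=\leftidx{^g}{\!\text{GIN}(I)}{}$ is not forced by any uniqueness statement; the right-hand side is not even a monomial ideal unless Borel-fixedness already holds, which is what is to be proved. What your argument does correctly yield is $\text{GIN}(\leftidx{^g}{\!I}{})=\text{GIN}(I)$, but that is strictly weaker. A genuine argument is required here: the paper reduces to diagonal and elementary upper-triangular $g$, writes the $g$-action on the degree-$d$ piece $K_d$ of $K=\text{GIN}(I)$ in the monomial basis, observes this matrix is triangular with respect to grlex with nonzero entries in the rows $m_1,\ldots,m_t$, and combines the maximality of the generic pivot tuple $m_1>\cdots>m_t$ with a column-echelon computation to force $\leftidx{^g}{\!K_d}{}=K_d$; some argument of this kind (or Galligo/Bayer--Stillman's) must replace your part (2).
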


\subsection{Method of multiplicities}\label{prelim:multiplicities}

The results here are from a paper by Dvir, Kopparty, Saraf, and Sudan~\cite{dvir2013extensions}. We state the theorems we need and the proofs can be found in the aforementioned paper.

\begin{define}[Hasse Derivatives]
Given a polynomial $f\in \F[x_1,\hdots,x_n]$ and a $i\in \Z_{\ge 0}^n$ the $i$th {\em Hasse derivative} of $f$ is the polynomial $f^{(i)}$ in the expansion $f(x+z)=\sum_{i\in \Z_{\ge 0}^n} f^{(i)}(x)z^i$ where $x=(x_1,...,x_n)$, $z=(z_1,...,z_n)$ and $z^i=\prod z_j^{i_j}$.  
\end{define}

They satisfy some useful identities. We state two simple ones that we will use.

\begin{lem}\label{derivativeRule}
Given polynomials $f,g\in \F[x_1,\hdots,x_n]$ and $i\in \Z_{\ge 0}^n$ we have, 
$$(f+g)^{(i)}=f^{(i)}+g^{(i)}\text{ and }(fg)^{(i)}=\sum_{j+k=i} f^{(j)}g^{(k)}.$$
\end{lem}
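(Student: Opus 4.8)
The plan is to read off both identities directly from the defining expansion $f(x+z)=\sum_{i\in\Z_{\ge 0}^n} f^{(i)}(x)z^i$, using nothing beyond the uniqueness of that expansion. The one point that deserves an explicit sentence is this uniqueness: viewing $\F[x_1,\hdots,x_n][z_1,\hdots,z_n]=\F[x_1,\hdots,x_n,z_1,\hdots,z_n]$ as a free module over $\F[x_1,\hdots,x_n]$ with basis the monomials $z^i$, the coefficients in a representation $\sum_i a_i(x)z^i$ are uniquely determined; in particular, if $\sum_i a_i(x)z^i=\sum_i b_i(x)z^i$ with $a_i,b_i\in\F[x_1,\hdots,x_n]$ then $a_i=b_i$ for all $i$, and only finitely many $a_i$ are nonzero since $f$ has finite degree. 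So to prove an identity about Hasse derivatives it suffices to exhibit the claimed coefficients as the $z^i$-coefficients of $f(x+z)$.

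For the additive rule I would use that substitution $x\mapsto x+z$ is an $\F$-algebra (in particular additive) operation on $\F[x_1,\hdots,x_n]$: writing everything out, $(f+g)(x+z)=f(x+z)+g(x+z)=\sum_i f^{(i)}(x)z^i+\sum_i g^{(i)}(x)z^i=\sum_i\bigl(f^{(i)}(x)+g^{(i)}(x)\bigr)z^i$, and comparing with $(f+g)(x+z)=\sum_i (f+g)^{(i)}(x)z^i$ and invoking uniqueness gives $(f+g)^{(i)}=f^{(i)}+g^{(i)}$. For the product rule I would use multiplicativity of the same substitution: $(fg)(x+z)=f(x+z)\,g(x+z)=\bigl(\sum_j f^{(j)}(x)z^j\bigr)\bigl(\sum_k g^{(k)}(x)z^k\bigr)$; expanding the (finite) product and collecting terms with $z^jz^k=z^{j+k}$ gives $\sum_i\bigl(\sum_{j+k=i} f^{(j)}(x)g^{(k)}(x)\bigr)z^i$, so comparison with $\sum_i (fg)^{(i)}(x)z^i$ and uniqueness yield $(fg)^{(i)}=\sum_{j+k=i}f^{(j)}g^{(k)}$.

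There is essentially no obstacle here; this is a formal bookkeeping argument. The only things to be slightly careful about are (i) stating the uniqueness of the $z$-expansion, which is immediate from $\F$-linear independence of distinct monomials in the $z_i$, and (ii) noting that all sums involved are finite, so the rearrangement of the double sum in the product case is legitimate term by term. I would therefore present the proof in two short paragraphs mirroring the two displayed identities, prefaced by the one-line remark on uniqueness.
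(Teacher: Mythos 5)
Your proof is correct; note that the paper itself does not prove this lemma but defers to \cite{dvir2013extensions}, where the argument is exactly the coefficient-comparison you give (uniqueness of the $z$-expansion plus additivity/multiplicativity of the substitution $x\mapsto x+z$). So your proposal matches the standard proof, and the care you take with uniqueness and finiteness of the sums is all that is needed.
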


We make precise what it means for a polynomial to vanish on a point $a\in \F^n$ with multiplicity. First we recall for a point $j$ in the non-negative lattice $\Z^n_{\ge 0}$, its weight is defined as $\text{wt}(i)=\sum_{i=1}^n j_i$.

\begin{define}[Multiplicity]
For a polynomial $f$ and a point $a$ we say $f$ vanishes on $a$ with {\em multiplicity} $N$, if $N$ is the largest integer such that all Hasse derivatives of $f$ of weight strictly less than $N$ vanish on $a$. We use $\text{mult}(f,a)$ to refer to the multiplicity of $f$ at $a$.
\end{define}

Notice, $\text{mult}(f,a)=1$ just means $f(a)=0$. We will use the following simple property concerning multiplicities of composition of polynomials.

\begin{lem}\label{lem:multComp}
Given a polynomial $f\in \F[x_1,\hdots,x_n]$ and a tuple $g=(g_1,\hdots,g_n)$ of polynomials in $\F[y_1,\hdots,y_m]$, and $a\in \F^m$ we have, 
$$\text{mult}(f\circ g, a)\ge \text{mult}(f,g(a)).$$
\end{lem}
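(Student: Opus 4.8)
The plan is to reduce the statement to the defining property of multiplicity via Hasse derivatives, using the chain-rule-type behavior encoded in Lemma~\ref{derivativeRule}. Write $M = \text{mult}(f, g(a))$; if $M = 0$ there is nothing to prove, so assume $M \ge 1$. By definition this means every Hasse derivative $f^{(i)}$ with $\text{wt}(i) < M$ vanishes at the point $g(a) \in \F^n$. The goal is to show that every Hasse derivative $(f\circ g)^{(j)}$ with $\text{wt}(j) < M$ vanishes at $a$, which gives $\text{mult}(f\circ g, a) \ge M$ as required.

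The key step is to expand $f(g(a+z))$ as a power series in $z = (z_1,\dots,z_m)$ and identify its low-weight coefficients. First expand $f$ around the point $g(a)$: writing $w = g(a+z) - g(a)$, we have $f(g(a+z)) = f(g(a) + w) = \sum_{i \in \Z_{\ge 0}^n} f^{(i)}(g(a)) \, w^i$ by the definition of Hasse derivatives. Now each component $w_t = g_t(a+z) - g_t(a)$ is a polynomial in $z$ with zero constant term, i.e.\ $w_t$ has $z$-valuation at least $1$. Therefore the monomial $w^i = \prod_t w_t^{i_t}$ has $z$-valuation at least $\text{wt}(i)$, so every monomial in $z$ appearing in the term $f^{(i)}(g(a)) w^i$ has weight at least $\text{wt}(i)$. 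Consequently, the coefficient of any monomial $z^j$ with $\text{wt}(j) < M$ in $f(g(a+z))$ is a combination only of terms $f^{(i)}(g(a)) w^i$ with $\text{wt}(i) \le \text{wt}(j) < M$ — and all of these vanish because $f^{(i)}(g(a)) = 0$ for $\text{wt}(i) < M$. Since $(f\circ g)^{(j)}(a)$ is exactly the coefficient of $z^j$ in $(f\circ g)(a+z) = f(g(a+z))$, we conclude $(f\circ g)^{(j)}(a) = 0$ for all $j$ with $\text{wt}(j) < M$, which is the claim.

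I expect the only real care needed is the bookkeeping that the $z$-valuation of $w_t$ is at least $1$ and hence $w^i$ contributes only to monomials $z^j$ with $\text{wt}(j) \ge \text{wt}(i)$; this is where one must be slightly careful that $w_t$ genuinely has no constant term (it does, by construction, since $w_t = g_t(a+z) - g_t(a)$ evaluates to $0$ at $z = 0$). Everything else is a formal manipulation of the power series expansion, and one may optionally invoke Lemma~\ref{derivativeRule} to justify the substitution-and-composition formula for Hasse derivatives rather than re-deriving it from scratch. The main conceptual point — and the reason the inequality can be strict — is that cancellation among the higher-order terms, or vanishing of the relevant $w^i$ when some $g_t$ happens to be constant, can only increase the multiplicity of $f \circ g$ at $a$ beyond $M$.
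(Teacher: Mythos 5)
Your proof is correct; in fact the paper itself gives no proof of this lemma, deferring to Dvir--Kopparty--Saraf--Sudan \cite{dvir2013extensions}, and your argument (expand $f$ around $g(a)$ via Hasse derivatives, note that each coordinate of $g(a+z)-g(a)$ has zero constant term so $w^i$ only contributes to $z$-monomials of weight at least $\text{wt}(i)$, and read off $(f\circ g)^{(j)}(a)$ as the coefficient of $z^j$) is exactly the standard proof given there. No gaps.
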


The key lemma here is an extended Schwartz-Zippel bound~\cite{schwartz1979probabilistic}\cite{ZippelPaper} which leverages multiplicities and is proven in \cite{dvir2013extensions}.

\begin{lem}[Schwartz-Zippel with multiplicity]\label{multSchwartz}
Let $f\in \F[x_1,..,x_n]$, with $\F$ an arbitrary field, be a nonzero polynomial of degree at most $d$. Then for any finite subset $U\subseteq \F$ ,
$$\sum\limits_{a\in U^n} \text{mult}(f,a) \le d|U|^{n-1}.$$
\end{lem}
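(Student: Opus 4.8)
# Proof Proposal for Lemma~\ref{multSchwartz} (Schwartz–Zippel with Multiplicity)

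\textbf{Overall approach.} The plan is to induct on the number of variables $n$, mirroring the classical proof of the Schwartz–Zippel lemma but tracking multiplicities instead of mere vanishing. The base case $n=1$ will be handled by a direct counting argument: a nonzero univariate polynomial of degree at most $d$ cannot have total multiplicity across all points of $\F$ (hence of any $U\subseteq\F$) exceeding $d$, because each point $a$ contributes a factor $(x-a)^{\text{mult}(f,a)}$ to $f$, and these factors are pairwise coprime, so $\sum_{a\in U}\text{mult}(f,a)\le\deg f\le d$.

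\textbf{Inductive step.} For $n\ge 2$, write $f\in\F[x_1,\dots,x_n]$ as a polynomial in $x_n$ with coefficients in $\F[x_1,\dots,x_{n-1}]$:
\[
f = \sum_{j=0}^{e} c_j(x_1,\dots,x_{n-1})\, x_n^{j},
\]
where $e\le d$ and $c_e\not\equiv 0$. The key inequality I would establish is that for any $a=(a',a_n)\in U^{n-1}\times U$,
\[
\text{mult}(f,a)\le \text{mult}(c_e, a') + \text{mult}\bigl(f(a',\,\cdot\,),\, a_n\bigr),
\]
where $f(a',\,\cdot\,)\in\F[x_n]$ is the restriction. The first term accounts for the ``drop'' in the leading behavior when $a'$ is a high-multiplicity zero of the leading coefficient; the second is the one-variable multiplicity of the restricted polynomial, which is only well-defined/useful when $f(a',\,\cdot\,)\not\equiv 0$, i.e.\ when $c_e(a')\ne 0$. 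I would prove this by relating Hasse derivatives of $f$ to those of $c_e$ and of the restriction, using Lemma~\ref{derivativeRule}; the cleanest route is probably to argue that if $\text{mult}(c_e,a')=s$ and $\text{mult}(f(a',\cdot),a_n)=t$ (with $t=\infty$ when the restriction vanishes identically), then all Hasse derivatives of $f$ at $a$ of weight $<s+t$ vanish is \emph{false} in general — rather the correct bound goes the other way, so I should instead bound $\text{mult}(f,a)$ directly: every Hasse derivative $f^{(i)}$ with $\text{wt}(i)<\text{mult}(f,a)$ vanishes at $a$, and restricting such derivatives to $x'=a'$ shows $f(a',\cdot)$ vanishes at $a_n$ to order at least $\text{mult}(f,a)-\text{mult}(c_e,a')$ when $c_e(a')\ne0$. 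Summing:
\[
\sum_{a\in U^n}\text{mult}(f,a) = \sum_{\substack{a'\in U^{n-1}\\ c_e(a')=0}}\sum_{a_n\in U}\text{mult}(f,a) \;+\; \sum_{\substack{a'\in U^{n-1}\\ c_e(a')\ne 0}}\sum_{a_n\in U}\text{mult}(f,a).
\]
For the second sum I use the one-variable case on $f(a',\cdot)$ (degree $\le e$) to get $\sum_{a_n}\text{mult}(f,a)\le e + \sum_{a_n}\text{mult}(c_e,a')\cdot(\text{something})$; combining with the first sum, which I bound using the inductive hypothesis applied to $c_e$ (a polynomial in $n-1$ variables of degree $\le d-e$), the totals should telescope to $d|U|^{n-1}$.

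\textbf{Bookkeeping of the two sums.} More carefully: split using $\text{mult}(f,a)\le \text{mult}(c_e,a')+\text{mult}(f(a',\cdot),a_n)$ valid whenever $f(a',\cdot)\not\equiv 0$, and $\text{mult}(f,a)\le \text{mult}(c_e,a')$ when one is forced to fall back (though if $c_e(a')\ne 0$ the restriction is automatically nonzero, so this case splitting is clean). Then
\[
\sum_{a\in U^n}\text{mult}(f,a)\le |U|\sum_{a'\in U^{n-1}}\text{mult}(c_e,a') + \sum_{\substack{a'\in U^{n-1}\\ c_e(a')\ne 0}}\;\sum_{a_n\in U}\text{mult}(f(a',\cdot),a_n).
\]
By induction the first sum is at most $|U|\cdot(d-e)|U|^{n-2} = (d-e)|U|^{n-1}$; by the $n=1$ case each inner sum in the second term is at most $e$, and there are at most $|U|^{n-1}$ choices of $a'$, giving at most $e|U|^{n-1}$. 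Adding yields $d|U|^{n-1}$, as desired.

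\textbf{Main obstacle.} The delicate point is proving the multiplicity inequality $\text{mult}(f,a)\le \text{mult}(c_e,a') + \text{mult}(f(a',\cdot),a_n)$ — i.e.\ correctly relating the multi-variable Hasse-derivative vanishing of $f$ to the single-variable multiplicity of its restriction, accounting for the leading coefficient. One must be careful that Hasse derivatives with respect to $x_n$ of the restriction $f(a',\cdot)$ equal the restrictions of the corresponding Hasse derivatives of $f$ (this follows from the definition via $f(x+z)=\sum f^{(i)}(x)z^i$ and Lemma~\ref{derivativeRule}), and that when $\text{mult}(c_e,a')=s$, the ``first surviving'' derivative of $f$ in the $x'$-directions of weight $s$ has $x_n$-degree still $e$, so no further degeneration occurs. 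Since this is precisely the lemma quoted from \cite{dvir2013extensions}, I would cite their argument for this step rather than reprove it in full, and present the inductive summation above as the skeleton.
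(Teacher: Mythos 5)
A preliminary remark: the paper itself does not prove Lemma~\ref{multSchwartz}; it is quoted from \cite{dvir2013extensions}, so your proposal is really being measured against the DKSS proof you are trying to reconstruct. Your skeleton (induction on $n$, the univariate base case by counting roots with multiplicity, the inductive step organized around the top $x_n$-coefficient $c_e$) is that proof's skeleton, but your bookkeeping has a genuine gap. The inequality you build the induction on, $\text{mult}(f,a)\le\text{mult}(c_e,a')+\text{mult}(f(a',\cdot),a_n)$, is either vacuous (when $f(a',\cdot)\equiv 0$, the right side is infinite) or already true without the $\text{mult}(c_e,a')$ term by Lemma~\ref{lem:multComp} (restriction to $x'=a'$ can only increase multiplicity), so it cannot carry the argument; and your fallback claim $\text{mult}(f,a)\le\text{mult}(c_e,a')$ for the degenerate rows is simply false. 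Concretely, take $f=x_1^2+x_1x_2=x_1(x_1+x_2)$ with $U=\F_q$, so $e=1$, $c_e=x_1$. At the origin $\text{mult}(f,(0,0))=2>1=\text{mult}(c_e,0)$, and your displayed split-sum bound evaluates to $|U|\cdot 1+(q-1)=2q-1$, whereas $\sum_{a\in U^2}\text{mult}(f,a)=(q-1)+(q-1)+2=2q$, which is exactly the lemma's bound $d|U|^{n-1}$. So your intermediate inequality is strictly false: the rows $a'$ with $c_e(a')=0$ get no ``$+e$'' allowance in your accounting, yet these are precisely the rows where the restriction degenerates and can contribute up to $|U|\,\text{mult}(c_e,a')+e$.

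The missing idea is the derivative compensation in \cite{dvir2013extensions}. For each $a'$ set $s=\text{mult}(c_e,a')$ and choose a Hasse derivative of $f$ of weight $s$ taken only in the $x_1,\dots,x_{n-1}$ directions for which the corresponding derivative of $c_e$ does not vanish at $a'$. Two facts then give the correct per-row claim: (i) a Hasse derivative of weight $s$ lowers multiplicity at a point by at most $s$ (a DKSS proposition not stated in this paper, and not derivable from Lemma~\ref{derivativeRule} or Lemma~\ref{lem:multComp} alone), and (ii) the restriction of this derivative to $x'=a'$ is a \emph{nonzero} univariate polynomial of degree at most $e$, because its $x_n^e$-coefficient is the surviving derivative of $c_e$ at $a'$. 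The univariate case then yields $\sum_{a_n\in U}\text{mult}(f,(a',a_n))\le |U|\,s+e$ for \emph{every} $a'$, including those with $c_e(a')=0$ or $f(a',\cdot)\equiv 0$; summing over $a'\in U^{n-1}$ and applying the inductive hypothesis to $c_e$ (degree at most $d-e$) gives $(d-e)|U|^{n-1}+e|U|^{n-1}=d|U|^{n-1}$. You allude to this mechanism (``the first surviving derivative \dots has $x_n$-degree still $e$'') but never state the per-row inequality it produces, and the summation you do write down is the one that fails; since this derivative step is the entire content of the lemma beyond the classical Schwartz--Zippel induction, the proposal as written has a real gap rather than being a complete alternative proof.
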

\section{The Ellenberg-Erman reduction}\label{sec:EEreduction}
We are going to make a series of reductions starting from Theorem \ref{SetTheorem} to end up at a simpler problem just involving hyperplanes. The first step in the reduction is provided by Corollary \ref{FrustSet2Alg} which shows Furstenberg Sets $S$ produce $|S|$-dimensional Furstenberg Algebras. One of the trickier aspects about dealing with Furstenberg sets and Algebras arises from the translating of subspaces. We can perform a ``dilation" operation on Furstenberg Algebras to move all rich subspaces to the origin. First, we make a definition.

\begin{define}[Homogenous Furstenberg Algebra]
A finite dimensional $\F$-algebra $R=\F[x_1,\hdots,x_n]/I$ with $I$ homogenous is said to be {\em $(k,m)$-Hom-Furstenberg}, if all $k$-dimensional {\bf linear subspaces} $V$ in $\F^n$ are $(S,m)$-rich. 
\end{define}

The next lemma shows Furstenberg Algebras can be transformed to homogenous Furstenberg Algebras.

\begin{lem}[Reduction to Hom Furstenberg]\label{DFrustLem}
Given a finite dimensional $(k,m)$-Furstenberg Algebra $R=\F[x_1,\hdots,x_n]/I$, $\text{hd}(R)=\F[x_1,\hdots,x_n]/\text{hd}(I)$ is a $(k,m)$-Hom-Furstenberg Algebra such that $\text{dim}_\F \text{hd}(R)=\text{dim}_\F R$.
\end{lem}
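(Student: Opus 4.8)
The plan is to use the dilation operator $\text{hd}(\cdot)$ together with the two claims of Lemma~\ref{dilationLem}. The dimension statement $\dim_\F \text{hd}(R) = \dim_\F R$ is exactly the first claim of Lemma~\ref{dilationLem}, so that part is immediate. It also guarantees that $\text{hd}(I)$ is homogeneous (this was noted when $\text{hd}(I)$ was defined), so $\text{hd}(R)$ is a legitimate candidate for a Hom-Furstenberg algebra and only the richness condition remains to be checked.

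For the richness condition, fix an arbitrary $k$-dimensional \emph{linear} subspace $V$ in $\F^n$, and let $W = \I_\F(V)$ be its defining ideal, which is generated by $n-k$ homogeneous degree-$1$ forms. Since $R$ is $(k,m)$-Furstenberg, some translate $V + t$ of $V$ is $(R,m)$-rich; let $W' = \I_\F(V+t)$ be the ideal of that translate, so $\dim_\F(R/W') \ge m$. The key observation is that translating a linear subspace does not change the linear (degree-$1$, top-degree) parts of its defining equations: if $V$ is cut out by linear forms $\ell_1,\dots,\ell_{n-k}$, then $V+t$ is cut out by $\ell_1 - \ell_1(t),\dots,\ell_{n-k}-\ell_{n-k}(t)$, and the highest-degree part of $\ell_i - \ell_i(t)$ is just $\ell_i$ itself. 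Hence $\text{hd}(W') = W$ (one needs to argue this at the level of ideals, not just generators — since the $\ell_i$ already form a regular sequence / a nice generating set, $\text{hd}$ of the ideal generated by the translated forms is the ideal generated by their highest-degree parts, namely $\langle \ell_1,\dots,\ell_{n-k}\rangle = W$). Now apply the second claim of Lemma~\ref{dilationLem} with $J = W'$:
\[
\dim_\F\left(\text{hd}(R)/\text{hd}(W')\right) \ge \dim_\F\left(R/W'\right) \ge m,
\]
and since $\text{hd}(W') = W = \I_\F(V)$, this says exactly that $V$ is $(\text{hd}(R),m)$-rich. As $V$ was an arbitrary $k$-dimensional linear subspace, $\text{hd}(R)$ is $(k,m)$-Hom-Furstenberg.

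I expect the main technical point to be the identity $\text{hd}(W') = W$ at the level of ideals. The inclusion $W \subseteq \text{hd}(W')$ is clear because each $\ell_i = \text{hd}(\ell_i - \ell_i(t)) \in \text{hd}(W')$. For the reverse inclusion one should note that $W'$ is generated by the $n-k$ affine-linear forms $\ell_i - \ell_i(t)$, and every element of $W'$ is $\sum_i p_i(\ell_i - \ell_i(t))$ for polynomials $p_i$; taking the highest-degree part and matching degrees shows $\text{hd}$ of such an element lies in $\langle \ell_1,\dots,\ell_{n-k}\rangle$ — essentially because $\ell_i - \ell_i(t)$ has highest-degree part $\ell_i$ and the top-degree terms cannot cancel against lower-degree terms. (Alternatively, one can invoke a change of linear coordinates sending $V$ to a coordinate subspace, under which the $\ell_i$ become some of the variables and the computation becomes transparent; $\text{hd}$ commutes with invertible linear changes of variables.) Once this identity is in hand, the rest is just assembling the two parts of Lemma~\ref{dilationLem}.
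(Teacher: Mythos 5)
Your proposal is correct and follows essentially the same route as the paper: for a linear subspace $V$ take the $(R,m)$-rich translate $V'=V+t$, note that $\text{hd}(\I_\F(V'))=\I_\F(V)$, and combine the two claims of Lemma~\ref{dilationLem}. The additional care you take with the ideal-level identity $\text{hd}(W')=W$ (which the paper asserts in one line) is sound, though strictly only the easy inclusion $\I_\F(V)\subseteq \text{hd}(W')$ is needed, since quotienting $\text{hd}(R)$ by the smaller ideal $\I_\F(V)$ can only increase the dimension.
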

\begin{proof}
Using Lemma \ref{dilationLem} we have that $\text{dim}_\F \text{hd}(R)=\text{dim}_\F R$. We claim all $k$-dimensional linear subspaces $V$ are $(\text{hd}(R),m)$-rich. This is because given some $V$ a translate of it $V'$ will be $(R,m)$-rich. Then we note the ideals $\I_\F(V)$ and $\I_\F(V')$ are generated by the equations of hyperlanes containing $V$ and $V'$ respectively. This implies $\I_\F(V)=\text{hd}(\I_\F(V))=\text{hd}(\I_\F(V'))$. The second claim in Lemma \ref{dilationLem} implies 
$$ m \le \text{dim}_\F \left( R/\I_\F(V') \right) \le \text{dim}_\F \left( \text{hd}(R)/\text{hd}(\I_\F(V'))\right)= \text{dim}_\F \left(\text{hd}(R)/\I_\F(V)\right).$$
This shows $V$ is $(\text{hd}(R),m)$-rich.
\end{proof}

Ellenberg and Erman in \cite{ellenberg2016furstenberg} call this step {\em dilation}. This terminology is most clear when we think of $\F=\R$. The process of taking the highest degree term of a $d$-degree polynomial $f$ can be thought of as taking the limit $t^{d}f(x/t)$ as $t\rightarrow \infty$ which corresponds to dilating the zero set of $f$ towards the origin. We have the following bound on Homogenous Furstenberg Algebras.

\begin{thm}[Homogenous Furstenberg Algebra Bound]\label{AlgebraBound}
A $(k,m)$-Hom-Furstenberg Algebra $R=\F_q[x_1,\hdots,x_n]/I$ with $m\le q^k$ satisfies the bound,
$$\text{dim}_{\F_q} R\ge C_{n,k}m^{n/k}$$
where $C_{n,k}=\Omega(1/16^{n\ln(n/k)})$.
\end{thm}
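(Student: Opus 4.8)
The overall strategy follows Ellenberg--Erman, reducing everything to a statement about the generic initial ideal and a combinatorial estimate on Borel-fixed monomial ideals. First I would set $R = \F_q[x_1,\ldots,x_n]/I$ with $I$ homogeneous and pass to the algebraic closure $\kF$ of $\F_q$ (or a large enough extension), noting that the dimension of the algebra and the richness of linear subspaces are preserved under base change, and that over an infinite field the generic initial ideal $\text{GIN}(I)$ from Theorem~\ref{genericInitialThm} is available. By Corollary~\ref{InDimPres} (applied via $\text{GIN}$, which is an initial ideal of a generic coordinate change $\leftidx{^g}{I}{}$, and $\text{dim}_\F R = \text{dim}_\F {}^gR$), we have $\text{dim}_\F R = \text{dim}_\F (\F[x_1,\ldots,x_n]/\text{GIN}(I))$, so it suffices to bound the number of standard monomials of the Borel-fixed monomial ideal $J := \text{GIN}(I)$. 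The key point is that $J$ is Borel-fixed, and the Hom-Furstenberg hypothesis transfers: for the specific coordinate subspace $V_0 = \{x_1 = \cdots = x_{n-k} = 0\}$ (and in fact, by Borel-fixedness, enough coordinate subspaces), $\text{dim}_\F(\F[x_1,\ldots,x_n]/(J + \langle x_1,\ldots,x_{n-k}\rangle)) \ge m$. Equivalently, setting $x_1 = \cdots = x_{n-k} = 0$ kills at most "few" standard monomials: the standard monomials of $J$ supported only on $x_{n-k+1},\ldots,x_n$ number at least $m$.

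The next step is to split into the two cases flagged in the introduction. Case (a): the relevant ideal of polynomials vanishing on the rich subspaces is zero — this is where one actually uses the Borel-fixed structure and reduces to a purely lattice-combinatorial statement (Lemma~\ref{latticeBound} in the paper), bounding the size of a down-closed, Borel-fixed subset $\Delta \subset \Z_{\ge 0}^n$ of the monomial lattice (the standard monomials) given that its "slice" in the last $k$ coordinates has size $\ge m$. The claim to prove here is that such a $\Delta$ must have size $\Omega(C_{n,k}) \cdot m^{n/k}$; this is the combinatorial heart and I would prove it by an inductive peeling argument on the number of coordinates, using that a Borel-fixed set's cross-sections are nested and that the generic-initial / Borel-fixedness forces the set to "spread out" in all $n$ directions once it is large in the last $k$. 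Case (b): when that ideal is nonzero, one has a nonzero polynomial (a section) vanishing on all the $m$-rich flats; here, following Section~\ref{sec:MultCase}, I would invoke the Schwartz--Zippel lemma with multiplicities (Lemma~\ref{multSchwartz}) to bound the number of $m$-rich flats, and combine with a counting argument (each point of $\text{Spec}$ lies on boundedly many rich flats) to again force $\text{dim}_\F R$ to be large.

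Finally I would combine the two cases: in either case we get $\text{dim}_\F R \ge C_{n,k} m^{n/k}$ with $C_{n,k} = \Omega(1/16^{n\ln(n/k)})$, the constant coming from iterating the lattice bound of Case (a) roughly $\ln(n/k)$ times with a loss of a constant factor (bounded by $16$) at each stage of the induction on dimension. I expect the main obstacle to be the lattice estimate of Case~(a): getting the clean exponent $n/k$ together with a constant that only degrades like $16^{n\ln(n/k)}$ (rather than, say, $n^{-O(n)}$ as in the original argument) requires being careful about how one partitions the Borel-fixed down-set and which monomials one charges to which cross-section — a naive induction loses a factorial. The transfer of the Furstenberg property through $\text{GIN}$ and the base change to $\kF$ are routine given the preliminaries, and Case~(b) is comparatively short once the multiplicity Schwartz--Zippel bound is in hand.
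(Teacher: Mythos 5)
Your plan has two genuine gaps. The first is the base-change step: you assert that richness of linear subspaces is preserved when passing to $\kF_q$, so that the Hom-Furstenberg hypothesis ``transfers'' through a generic coordinate change and $\text{GIN}$. A subspace defined over $\F_q$ does stay rich, but the Hom-Furstenberg property over $\kF_q$ requires richness of subspaces with coefficients in the extension, and this can fail: the paper's example $Q=\F_2[x_1,x_2]/I$ in Section~\ref{sec:VarOfRichHyper} is Furstenberg over $\F_2$ yet has a non-rich hyperplane $x_1+ax_2$ over $\F_4$. The extension step (Lemma~\ref{extendingLemma}) is legitimate only under the hypothesis that the ideal $J_m(R)$ of rich-hyperplane equations is $\langle 0\rangle$, so the dichotomy on that ideal must come \emph{before} the passage to $\kF_q$ and $\text{GIN}$; your opening paragraph reduces to the Borel-fixed lattice unconditionally, and that is exactly the step that breaks in Case~(b).

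The second gap is structural: the paper does not prove the general-$k$ bound in one shot. It first reduces, by induction on the ambient dimension (Lemma~\ref{recurseBound}), to the hyperplane case $k=n-1$ (Theorem~\ref{thm:furstPlaneBound}), and only there runs the dichotomy: $J_{\lceil m/2\rceil}(R)=\langle 0\rangle$ is handled by $\text{GIN}$ plus the lattice bound (Lemma~\ref{latticeBound}), while $J_{\lceil m/2\rceil}(R)\ne\langle 0\rangle$ is handled by Schwartz--Zippel with multiplicities over the affine space $\F_q^n$ of linear forms (Theorem~\ref{thm:furstPlaneNon}); there is no counting of rich flats and no ``each point lies on boundedly many rich flats'' step. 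Your Case~(b) for general $k$ would need a multiplicity Schwartz--Zippel argument over the space of $k$-flats (a Grassmannian), which the machinery of Theorem~\ref{Jmdef} and Lemma~\ref{MultiplicityMainLemma} only provides for hyperplanes --- this is precisely where Ellenberg--Erman resorted to Hochster--Huneke, so it is a missing idea, not a routine invocation. Relatedly, your constant accounting does not match the argument: the hyperplane dichotomy is applied at each of the $n-k$ stages of the dimension induction, losing $C_{i,i-1}^{n/i}=(1/16)^{n/i}$ at stage $i$, whose product is $\Omega((1/16)^{n\ln(n/k)})$; ``iterating roughly $\ln(n/k)$ times with a constant loss bounded by $16$'' would give only $16^{\ln(n/k)}$ and reflects neither the one-shot lattice route you first describe nor the paper's induction.
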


Ellenberg and Erman in \cite{ellenberg2016furstenberg} show that Theorem \ref{AlgebraBound} is tight in the  exponent of $m$ using the algebra $\F[x_1,\hdots,x_n]/\langle x_1,\hdots,x_n\rangle^d$. They also produce an example to show why the condition $m\le q^k$ is necessary (at least for $k=1$). We discuss this example below.

\begin{example}[$m\le q^k$ is necessary]\label{ex:mBoundReq}
Consider $R_N=\F_q[x_1,x_2]/I$ with 
$$I=\langle x_1,x_2^{q^N}\rangle\prod\limits_{r\in \F_q} \langle x_2+rx_1,x_1^{q^N}\rangle.$$
It is not hard to check for a line $L_t$ with equation $x_2+tx_1=0$ for $t\in \F_q$, $L\sqcap R_N$ has dimension at least $q^N$. For the line $L_\infty$ with equation $x_1=0$ the same holds. This shows $R_N$ is $(1,q^N)$-Hom-Furstenberg. If Theorem \ref{AlgebraBound} did not have the condition $m\le q^k$, then it would imply that $\text{dim}_{\F_q} R_N \ge C_{n,k} q^{2N}$. 

Consider the map,
$$R_N\rightarrow \frac{\F_q[x_1,x_2]}{\langle x_1,x_2^{q^N}\rangle}\oplus\bigoplus\limits_{r\in \F_q} \frac{\F_q[x_1,x_2]}{\langle x_2+rx_1,x_1^{q^N}\rangle}$$
obtained by combining the quotient maps $R_N\rightarrow \F_q[x_1,x_2]/\langle x_2+rx_1,x_1^{q^N}\rangle$ for all $r\in F_q$ and $R_N \rightarrow \F_q[x_1,x_2]/\langle x_1,x_2^{q^N}\rangle$. It is easy to check this map is injective. This implies $\text{dim}_{\F_q} R_N\le q^{N}(q+1)$ which leads to a contradiction for $N\ge 2$. Hence, the condition $m\le q^k$ is necessary.
\end{example}

We can prove Theorem \ref{SetTheorem} given Theorem \ref{AlgebraBound}.

\begin{proof}[Proof of Theorem \ref{SetTheorem}]
Given a $(k,m)$-Furstenberg set $S$ in $\F^n_q$, $\text{Alg}(S)$ is a $(k,m)$-Furstenberg Algebra of dimension $|S|$ using Corollary \ref{FrustSet2Alg}. Lemma \ref{DFrustLem} shows $\text{hd}(\text{Alg}(S))$ is $(k,m)$-Hom-Furstenberg Algebra of dimension $|S|$. Theorem \ref{AlgebraBound} now proves Theorem \ref{SetTheorem}.
\end{proof}

Because we do not have to worry about translations anymore it suffices to prove Theorem \ref{AlgebraBound} for the case $k=n-1$ and using an induction argument. In the case $k=n-1$ we are taking intersection with hyperplanes containing the origin. We first state the hyperplane version of Theorem \ref{AlgebraBound}.

Theorem \ref{thm:furstPlaneBound} and Lemma \ref{recurseBound} proves $C_{n,k}=\Omega(1/16^{n\log(n/k)})$. The constant in Theorem \ref{thm:furstPlaneBound} can be optimized but unfortunately not all the way to match the bound in Example \ref{constantBound} below.

\begin{thm}[Hyperplane Furstenberg Bound]\label{thm:furstPlaneBound}
An $(n-1,m)$-Hom-Furstenberg Algebra $R=\F_q[x_1,\hdots,x_n]/I$ with $m\le q^{n-1}$ satisfies the bound,
$$\text{dim}_{\F_q} R \ge C_{n,n-1}m^{n/(n-1)},$$
where $C_{n,n-1}\ge 1/16$.
\end{thm}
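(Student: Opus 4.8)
The plan is to associate to the Hom-Furstenberg algebra $R = \F_q[x_1,\dots,x_n]/I$ a monomial ideal that is much easier to analyze, losing only a constant factor in dimension, and then to reduce the combinatorial problem to a statement about down-closed subsets of the integer lattice. First I would pass from $I$ to its initial ideal, or — since we want to exploit homogeneity and the Borel action — first homogenize (we are already in the homogeneous case here) and then work over the algebraic closure $\kF_q$, replacing $I$ by $\text{GIN}(I)$ via Theorem \ref{genericInitialThm}. Since passing to initial ideals preserves vector-space dimension (Corollary \ref{InDimPres}), it suffices to prove the bound for the monomial algebra $\F[x_1,\dots,x_n]/\text{GIN}(I)$. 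The crucial gain is that $\text{GIN}(I)$ is Borel-fixed, so its set of standard monomials $\text{Std}$ is a down-closed subset of $\Z_{\ge 0}^n$ with a strong extra symmetry: if a monomial $x^\lambda$ is standard and $\lambda_i > 0$ then the monomial obtained by decreasing $\lambda_i$ and increasing $\lambda_j$ for any $j < i$ is also standard (this is the combinatorial shadow of Borel-fixedness).

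The second step is to translate the Furstenberg-richness hypothesis into a statement about this lattice set. For a generic hyperplane direction — in fact, by the Borel-fixed structure, it suffices to understand the coordinate hyperplane $\{x_n = 0\}$ and its Borel translates — the quotient $R \sqcap V$ for $V = \{x_n = 0\}$ corresponds, at the level of standard monomials, to the slice of $\text{Std}$ with last coordinate zero (or, more carefully, to the standard monomials of the quotient monomial ideal, which by Borel-fixedness is exactly this slice). The hypothesis that every hyperplane through the origin is $(R,m)$-rich, when run through the generic-initial-ideal machinery, forces every such coordinate-like slice to have size at least $m$; combined with the $m \le q^k$ bound (here $k = n-1$, so $m \le q^{n-1}$), and here I expect one needs the quantitative Schwartz–Zippel input from Lemma \ref{multSchwartz} or the bound on the number of $m$-rich flats, to rule out the degenerate case where the relevant ideal of rich-hyperplane-equations is nonzero (this is the dichotomy referenced in Sections \ref{sec:IdealZero} and \ref{sec:MultCase}). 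So I would split into two cases: the ideal vanishing on rich hyperplane equations is zero, or it is not.

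The third and main step is the purely combinatorial lattice estimate, which is where Lemma \ref{latticeBound} (the authors' replacement for Lemma 5.3 of Ellenberg–Erman) does the work. One must show: if $\Lambda \subset \Z_{\ge 0}^n$ is finite, down-closed, Borel-stable, and every "hyperplane slice" has at least $m$ points, then $|\Lambda| \ge \tfrac{1}{16} m^{n/(n-1)}$. The natural approach is induction on $n$: project $\Lambda$ onto the first $n-1$ coordinates, observe that the fibers are intervals $\{0,1,\dots,\ell(\mu)\}$ with $\ell$ weakly decreasing in the Borel order, relate the slice-size hypothesis for $\Lambda$ to a slice-size hypothesis for the projection, apply the inductive bound, and then sum the fiber lengths using convexity/rearrangement. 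The constant $1/16$ (as opposed to $1/n$) should come from being careful not to lose a factor of $n$ at each of the roughly $\log n$ levels of recursion — presumably by handling several coordinates at once or by a sharper convexity argument rather than a crude one-coordinate-at-a-time peeling.

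The hard part will be the combinatorial lattice bound with the absolute constant: getting $\tfrac{1}{16}m^{n/(n-1)}$ rather than something like $\tfrac{1}{n!}m^{n/(n-1)}$ requires the improved inductive scheme of Lemma \ref{latticeBound}, and I expect the delicate point is choosing the right inductive quantity (not just "every slice has $\ge m$ points" but a more robust invariant that is preserved under projection without constant loss) and controlling the interaction between the Borel-monotonicity of fiber lengths and the slice-size lower bound. The case analysis (zero vs.\ nonzero ideal of rich hyperplanes) is secondary: in the nonzero case the rich hyperplanes lie on a low-degree hypersurface in the dual space, and Schwartz–Zippel with multiplicities bounds how many there can be, which is incompatible with the Furstenberg hypothesis unless $R$ is already large.
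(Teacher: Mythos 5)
Your overall route matches the paper's: a dichotomy on whether the ideal of rich hyperplane equations is zero, with the passage to $\kF_q$, the generic initial ideal, and the Borel-fixed lattice bound (Lemma~\ref{latticeBound}) in the zero branch, and Schwartz--Zippel with multiplicities in the nonzero branch. However, as written the plan has two genuine gaps. First, you propose to pass to $\kF_q$ and replace $I$ by $\text{GIN}(I)$ unconditionally, \emph{before} the case split. That step is not available in general: Theorem~\ref{genericInitialThm} needs an infinite field, and the degeneration step (Lemma~\ref{degenerationLemma}) needs every hyperplane with coefficients in $\kF_q$ to be $(R,l)$-rich, whereas the Furstenberg hypothesis only gives richness for $\F_q$-hyperplanes; richness need not extend to the closure (the paper's example $Q=\F_2[x_1,x_2]/I$ in Section~\ref{sec:VarOfRichHyper} has all $\F_2$-lines rich but an $\F_4$-line that is not). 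The bridge is exactly the hypothesis $J_l(R)=\langle 0\rangle$: by item 3 of Theorem~\ref{Jmdef} it forces $J_l(R^{\kF_q})=\langle 0\rangle$, i.e.\ richness of all $\kF_q$-hyperplanes. So the dichotomy must come first, and the GIN reduction lives only inside the zero-ideal branch (this is Lemma~\ref{extendingLemma} followed by Lemma~\ref{degenerationLemma}).

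Second, the nonzero branch as you describe it does not deliver the bound. If the ideal you take is that of $(R,m)$-rich hyperplane equations and it is nonzero, its generators vanish only to order one at each point of $\F_q^n$, and Schwartz--Zippel then yields merely $\text{dim}_{\F_q} R\gtrsim q$, far short of $m^{n/(n-1)}$; there is no ``incompatibility with the Furstenberg hypothesis'' to exploit, since every direction genuinely is rich. The missing idea is the two-threshold trick: run the dichotomy at $l=\lceil m/2\rceil$, and use the fact (Lemma~\ref{MultiplicityMainLemma}) that a generator of $J_l(R)$ --- a minor of the multiplication-by-$h$ map, of degree $\text{dim}_{\F_q}R-l+1$ --- vanishes with multiplicity at least $m-l+1$ at every $(R,m)$-rich direction. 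Since all of $\F_q^n$ is $m$-rich, Lemma~\ref{multSchwartz} gives $q^n(m-l+1)\le(\text{dim}_{\F_q}R-l+1)q^{n-1}$, hence $\text{dim}_{\F_q}R\ge qm/2\ge m^{n/(n-1)}/2$ using $m\le q^{n-1}$, while the zero-ideal branch at level $l$ gives $\text{dim}_{\F_q}R\ge \frac14\left(m/2\right)^{n/(n-1)}\ge m^{n/(n-1)}/16$ via Lemma~\ref{latticeBound}; this balancing at $l\approx m/2$ is where the constant $1/16$ comes from. (Your sketch of proving the lattice bound by induction on $n$ also differs from the paper's path-counting proof of Lemma~\ref{latticeBound}, but since that lemma is available as stated, the substantive gaps are the two above.)
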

The following lemma shows how Theorem \ref{thm:furstPlaneBound} proves Theorem \ref{AlgebraBound}.

\begin{lem}\label{recurseBound}
Suppose Theorem \ref{thm:furstPlaneBound} is true for all $n$ and for some constant $C_{n,n-1}$ then for all $k,n$ a $(k,m)$-Hom-Furstenberg Algebra $R=\F_q[x_1,\hdots,x_n]/I$ satisfies the bound,
\begin{align*}
    \text{dim}_{\F_q} R \ge \prod\limits_{i=k+1}^{n}C_{i,i-1}^{n/i} m^{n/k}. 
\end{align*}

\end{lem}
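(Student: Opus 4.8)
The plan is to proceed by induction on $n-k$. When $n-k=1$, i.e.\ $k=n-1$, the product $\prod_{i=k+1}^{n}C_{i,i-1}^{n/i}$ has the single factor $C_{n,n-1}^{n/n}=C_{n,n-1}$ and $m^{n/k}=m^{n/(n-1)}$, so the asserted inequality is exactly Theorem~\ref{thm:furstPlaneBound}, which we have assumed. It therefore suffices to treat $k\le n-2$, assuming the lemma already known in $n-1$ variables for the same value of $k$. Throughout I would work in the regime $m\le q^k$ in which the statement is used and true (otherwise it is false, cf.\ Example~\ref{ex:mBoundReq}), and I would also normalize each $C_{i,i-1}\le 1$, which is harmless since shrinking these constants only weakens the conclusion.

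The heart of the argument is a \emph{descent} by one dimension. Fix a hyperplane $H\subseteq\F_q^n$ through the origin and look at $R\sqcap H=R/\I_{\F_q}(H)$. Since $\I_{\F_q}(H)=\langle\ell\rangle$ for a single linear form $\ell$, a linear change of coordinates identifies $R\sqcap H$ with $\F_q[x_1,\ldots,x_{n-1}]/I'$ where $I'$ is the image of the homogeneous ideal $I+\langle\ell\rangle$, hence homogeneous; so $R\sqcap H$ is a homogeneous algebra in $n-1$ variables. I claim it is $(k,m)$-Hom-Furstenberg. Indeed, any $k$-dimensional linear subspace $V$ of $\F_q^{n-1}$ corresponds under this identification to a $k$-dimensional linear subspace $V\subseteq H$ of $\F_q^n$, which is $(R,m)$-rich because $R$ is $(k,m)$-Hom-Furstenberg; and since $V\subseteq H$ we have $\I_{\F_q}(H)\subseteq\I_{\F_q}(V)$, so by the quotient identity $(R/K_1)/K_2=R/(K_1+K_2)$,
$$\text{dim}_{\F_q}\bigl((R\sqcap H)\sqcap V\bigr)=\text{dim}_{\F_q}\bigl(R/(\I_{\F_q}(H)+\I_{\F_q}(V))\bigr)=\text{dim}_{\F_q}\bigl(R/\I_{\F_q}(V)\bigr)\ge m.$$
Thus $V$ is $(R\sqcap H,m)$-rich, proving the claim.

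Now apply the inductive hypothesis to $R\sqcap H$ to get
$$\text{dim}_{\F_q}(R\sqcap H)\ \ge\ M:=\Bigl(\prod_{i=k+1}^{n-1}C_{i,i-1}^{(n-1)/i}\Bigr)m^{(n-1)/k},$$
and since each $C_{i,i-1}\le 1$ and $m\le q^k$ we have $M\le m^{(n-1)/k}\le q^{n-1}$. As $H$ was an arbitrary hyperplane through the origin, $R$ is itself an $(n-1,M)$-Hom-Furstenberg algebra, and the bound $M\le q^{n-1}$ makes Theorem~\ref{thm:furstPlaneBound} applicable, yielding $\text{dim}_{\F_q}R\ge C_{n,n-1}M^{n/(n-1)}$. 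Using $\frac{n-1}{i}\cdot\frac{n}{n-1}=\frac{n}{i}$ and $\frac{n-1}{k}\cdot\frac{n}{n-1}=\frac{n}{k}$, and absorbing $C_{n,n-1}=C_{n,n-1}^{n/n}$ as the $i=n$ factor,
$$\text{dim}_{\F_q}R\ \ge\ C_{n,n-1}\Bigl(\prod_{i=k+1}^{n-1}C_{i,i-1}^{n/i}\Bigr)m^{n/k}\ =\ \Bigl(\prod_{i=k+1}^{n}C_{i,i-1}^{n/i}\Bigr)m^{n/k},$$
which is the desired inequality and closes the induction.

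The step I expect to take the most care is the descent: checking that $R\sqcap H$ genuinely is a $(k,m)$-Hom-Furstenberg algebra in $n-1$ variables, and in particular that richness of a $k$-subspace $V\subseteq H$ with respect to $R$ transfers to richness with respect to $R\sqcap H$. This hinges entirely on the containment $\I_{\F_q}(H)\subseteq\I_{\F_q}(V)$ combined with $(R/K_1)/K_2=R/(K_1+K_2)$; the rest is bookkeeping with exponents. The only other subtlety is keeping the intermediate parameter $M$ at or below $q^{n-1}$ so that Theorem~\ref{thm:furstPlaneBound} is applicable, which is precisely where the harmless normalizations $m\le q^k$ and $C_{i,i-1}\le 1$ are used.
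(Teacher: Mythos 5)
Your proof is correct and uses the same essential ingredients as the paper's: the transfer identity $(R\sqcap H)\sqcap V=R\sqcap V$ for $V\subseteq H$ (so richness passes to sections) together with one application of Theorem~\ref{thm:furstPlaneBound} for each dimension between $k$ and $n$, yielding the identical constant $\prod_{i=k+1}^{n}C_{i,i-1}^{n/i}$. The only difference is organizational — you induct downward on the ambient dimension by slicing with hyperplanes, while the paper recurses upward by showing $R$ is $(j,m_j)$-Hom-Furstenberg for $j=k+1,\ldots,n$ — and the side caveats you flag ($m\le q^k$, $C_{i,i-1}\le 1$, non-integral intermediate richness parameters) are implicitly present in the paper's proof as well.
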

\begin{proof}
Say $R=\F_q[x_1,\hdots,x_n]/I$ is a $(k,m)$-Hom-Furstenberg algebra. Given any $k+1$ dimensional linear subspace $V$ consider the algebra $R\sqcap V=R/\I_{\F_q}(V)=\F_q[x_1,\hdots,x_n]/(I+\I_{\F_q}(V))$. $\I_{\F_q}(V)=\langle h_1,\hdots,h_{n-k-1}\rangle$ where $h_{1},\hdots,h_{n-k-1}$ are $n-k-1$ linearly independent degree $1$ equations of hyperplanes containing $V$. By performing a change of basis in the field $\F_q$ on the space defined by $x_1,\hdots,x_n$ and renaming, we can assume $h_1,\hdots,h_{n-k-1}$ are the coordinate hyperplanes $x_1,\hdots,x_{n-k-1}$. That is, $V$ is the subspace defined by $x_1=\hdots=x_{n-k-1}=0$ after the base change. This means $R\sqcap V=R/\I_{\F_q}(V)=\F_q[x_{n-k},\hdots,x_n]/I'$ where $I'$ is the ideal generated by the restricted polynomials $f'(x_{n-k},\hdots,x_n)=f(0,0,\hdots,0,x_{n-k},\hdots,x_n)$ for every polynomial $f\in I$. 

Any $k$ dimensional linear subspace $W$ contained in $V$ corresponds to a hyperplane in $V$. In other words it will be a hyperplane in the variables $x_{n-k},\hdots,x_n$. We claim that $(R\sqcap V)\sqcap W=R/(\I_{\F_q}(V)+\I_{\F_q}(W))=R/\I_{\F_q}(W)=R\sqcap W$. This is the case because any hyperplane containing $V$ automatically contains $W$. We know $\text{dim}_{\F_q} R\sqcap W\ge m$. This implies $\text{dim}_{\F_q} (R\sqcap V)\sqcap W\ge m$. As $R\sqcap V=\F_q[x_{n-k},\hdots,x_n]/I'$ and $W$ is a hyperplane in $V$ which is spanned by $x_{n-k},\hdots,x_n$, we have $R\sqcap V$ is $(k,m)$-Furstenberg. As we are supposing Theorem \ref{AlgebraBound} is true for hyperplanes we have $\text{dim}_{\F_q} R\sqcap V \ge m^{1+1/k}C_{k+1,k}$.

Applying this argument recursively for a $(k,m)$-Hom-Furstenberg Algebra $R$ proves
\begin{align*}
    \text{dim}_{\F_q} R \ge \prod\limits_{i=k+1}^{n}C_{i,i-1}^{n/i} m^{n/k}. 
\end{align*}

One might worry that during the recursion $m$ is no longer an integer but that is not a problem because at every stage when we calculate the bound we can take ceiling of the bound to get a better bound. As the bound is increasing in $m$ we will not run into problems.
\end{proof}

Theorem \ref{thm:furstPlaneBound} and Lemma \ref{recurseBound} proves $C_{n,k}=\Omega(1/16^{n\log(n/k)})$ in Theorem \ref{AlgebraBound}. Ellenberg and Erman in \cite{ellenberg2016furstenberg} prove Theorem \ref{thm:furstPlaneBound} with $C_{n,n-1}=\Omega(1/n)$. Lemma \ref{recurseBound} gives them $C_{n,k}=1/n^{O(n\ln(n/k))}$ in Theorem \ref{AlgebraBound}. We will prove Thoerem \ref{AlgebraBound} with $C_{n,n-1}=1/16$. We therefore obtain the constant $C_{n,k}=\Omega(1/16^{n\ln(n/k)})$ for $k<n-1$. To obtain the Theorem \ref{KakeyaTheorem} bound we would need $C_{n,1}$ to be $1/2^n$. The reason we can not recover this bound is because this bound does not hold for Algebras as seen from the following example.

\begin{example}[Upper bound on $C_{n,k}$]\label{constantBound}
The finite dimensional $\F$-algebra $\F_q[x_1,\hdots,x_n]/\langle x_1,\hdots,x_n\rangle^d$ has dimension ${d-1+n \choose n}$ and is $(k,{d-1+k\choose k})$-Hom-Furstenberg. This gives us the following bounds for Theorem \ref{AlgebraBound}.
$$C_{n,k}\le \frac{{d-1+n \choose n}}{{d-1+k\choose k}^{n/k}}\le \left(\prod\limits_{j=k+1}^n \frac{d-1+j}{j}\right) {d-1+k\choose k}^{-(n-k)/k}\le \left(\prod\limits_{j=k+1}^n \frac{d-1+j}{j}\right) \frac{(k!)^{(n-k)/k}}{d^{k-n}} ,$$
for all $d\le q$. At the last step we used the inequality $(n-k+1)^k/k!\ge {n \choose k}$. We set $d=q$, and as the bound is field independent, we can let $q$ grow. This gives us the bound,
$$C_{n,k}\le \frac{(k!)^{n/k}}{n!}\le O(e^{-n\log(n/k)}).$$
In particular, for $C_{n,1}$ we have,
$$C_{n,1}\le \frac{1}{n!}=O(e^{-n\log(n)}). $$
For $C_{n,n-1}$ we have,
$$C_{n,n-1}\le \frac{(n-1)!^{1/(n-1)}}{n}=\frac{1}{e}+o_n(1),$$
where $o_n(1)$ tends to $0$ as $n$ grows towards infinity.
\end{example}



\section{The variety of $(R,m)$-rich hyperplanes}\label{sec:VarOfRichHyper}

To prove Theorem \ref{AlgebraBound} for hyperplanes we will treat the space of linear hyperplanes in $\F^n_q$ as an algebraic space and provide a recipe to construct the space of $(R,m)$-rich hyperplanes as a variety defined by an ideal. We will skip saying linear as from now on all our hyperplanes will contain the origin. Given a field $\F$, its extension $\E$, and an ideal $I$ of $\F[x_1,\hdots,x_n]$, we let $I^\E$ be the ideal of $\E[x_1,\hdots,x_n]$ generated by the polynomials in $I$. We extend this notation further so that given $R=\F[x_1,\hdots,x_n]/I$, $R^\E$ refers to the algebra $\E[x_1,\hdots,x_n]/I^\E$.

Given a hyperplane $V_h$ in $\F^n$ defined by the equation $h(x)=h_1x_1+...+h_{n}x_{n}$ with $(h_1,\hdots,h_n)\in \F^n$, we note that the coefficients $h_i$ provide variables which allow us to consider a general hyperplane. We can find polynomials in these variables which vanish at hyperplanes with some given property. The objective in this section is to prove the following theorem.

\begin{thm}[Variety of $(R,m)$-rich hyperplanes]\label{Jmdef}
Given a finite dimensional $\F$-algebra $R=\F[x_1,\hdots,x_n]/I$, where $\F$ is an {\bf arbitrary} field,  and some number $m\ge 0$, there exists an ideal $J_m(R)$ in the ring $\F[h_1,\hdots,h_n]$, where $h_1,\hdots,h_n$ are variables defining a general hyperplane equation $h(x)=h_1x_1+\hdots+h_nx_n$, such that the following properties are satisfied:
\begin{enumerate}
    \item $\V_\F(J_m(R))\subseteq \F^n$ is the set of $(R,m)$-rich hyperplane equations with coefficients in $\F$.
    \item $J_m(R)$ is either $\langle 0\rangle$ or generated by homogenous polynomials of degree $(\text{dim}_\F R)-m+1$.
    \item Moreover, given a field extension $\E$ of $\F$, we have $J_m(R^\E)=J_m(R)^{\E}$.
\end{enumerate}
\end{thm}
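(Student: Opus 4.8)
The plan is to construct $J_m(R)$ explicitly using the standard monomial basis and coordinates on the space of hyperplane equations. First I would set up the "universal" computation: working over the polynomial ring $\F[h_1,\dots,h_n]$, I consider the ring $\F[h_1,\dots,h_n][x_1,\dots,x_n]$ and the universal linear form $h(x) = h_1 x_1 + \dots + h_n x_n$. The algebra $R \sqcap V_h$ at a particular hyperplane is $R/\langle h(x)\rangle$, whose dimension is $(\dim_\F R) - |S \cap V_h|$-type quantity; more precisely, by Proposition~\ref{point2algebra} (and the definition of $(R,m)$-rich), $V_h$ is $(R,m)$-rich iff $\dim_\F(R/\langle h\rangle) \ge m$, i.e. iff the multiplication-by-$h$ map $\mu_h : R \to R$ has image of dimension $\le (\dim_\F R) - m$, i.e. rank $\le D - m$ where $D = \dim_\F R$. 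So the condition "$V_h$ is $(R,m)$-rich" is exactly a rank condition on a linear map whose matrix entries are linear forms in $h_1,\dots,h_n$.

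Concretely, I would fix the standard monomial basis $e_1,\dots,e_D$ of $R$ (Theorem~\ref{ThMono}), and write the matrix $M(h)$ of $\mu_h : R \to R$ in this basis; each entry of $M(h)$ is a homogeneous linear form in $h_1,\dots,h_n$ with coefficients in $\F$ (since $h = \sum h_i x_i$ and multiplication by each $x_i$ is $\F$-linear on $R$). Then $V_h$ is $(R,m)$-rich iff $\rank M(h) \le D-m$ iff all $(D-m+1)\times(D-m+1)$ minors of $M(h)$ vanish. I define $J_m(R)$ to be the ideal in $\F[h_1,\dots,h_n]$ generated by all these maximal minors of size $D-m+1$. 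Property (1) is then immediate from the rank characterization. Property (2) is also immediate: each such minor is a determinant of a $(D-m+1)\times(D-m+1)$ matrix of linear forms, hence is homogeneous of degree $D-m+1$ (or is identically zero, in which case that generator is dropped; if all generators are zero then $J_m(R) = \langle 0\rangle$). Property (3) follows because the matrix $M(h)$ is defined over $\F$ and base-changing to $\E$ replaces $R$ by $R^\E = \E[x]/I^\E$, whose standard monomials are the same (the initial ideal is unchanged under field extension since Gröbner/initial computations commute with flat base change — or one checks directly that $\text{in}(I^\E) = \text{in}(I)^\E$), so the matrix and hence its minors are literally the same polynomials; thus $J_m(R^\E) = J_m(R)^\E$.

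The main obstacle I anticipate is Property (3), specifically verifying that the standard monomial basis (equivalently $\text{in}(I)$) is stable under field extension, so that $M(h)$ really is "the same" matrix over $\F$ and over $\E$. This needs a small lemma: if $I \subseteq \F[x]$ and $\E \supseteq \F$, then $\text{in}(I^\E) = (\text{in}(I))^\E$. One direction ($\supseteq$) is clear since $I \subseteq I^\E$. For the other direction, one argues that a Gröbner basis of $I$ over $\F$ (which exists and can be computed by Buchberger's algorithm entirely within $\F[x]$) remains a Gröbner basis of $I^\E$ over $\E$ — the $S$-polynomial reductions are field-independent. This gives $\text{Std}(R^\E) = \text{Std}(R)$, hence $\dim_\E R^\E = \dim_\F R = D$ and the matrix $M(h)$ computed over $\E$ has entries that are the $\E$-linear-combinations of the $\F$-coefficients appearing in the $\F$-computation, i.e. the minors generate $J_m(R)^\E$. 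A secondary (but routine) point is checking that $\rank M(h)$ genuinely equals $D - \dim_\F(R/\langle h\rangle)$: this is just rank–nullity for the surjection $R \twoheadrightarrow R/\langle h \rangle$ together with the fact that $\langle h\rangle R = \text{image}(\mu_h)$, so $\dim R/\langle h\rangle = D - \rank\mu_h$.

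An alternative, possibly cleaner route to Properties (1)–(2) that sidesteps picking a basis: use that $V_h$ is $(R,m)$-rich iff the $D-m+1$ elements $e_{i_1} h, \dots$ fail to be linearly independent in... — but this still amounts to the same determinantal ideal, so I would go with the explicit minors-of-$M(h)$ construction above, which makes all three properties transparent once the base-change lemma for initial ideals is in hand.
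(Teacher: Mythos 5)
Your proposal is correct and is essentially the paper's own proof: the paper also defines $J_m(R)$ as the ideal generated by the $(\dim_\F R - m + 1)$-sized minors of the multiplication-by-$h$ map written in the standard monomial basis, gets homogeneity from the entries being linear in $h_1,\hdots,h_n$, and gets the base-change property from the fact that $\text{Std}(R)=\text{Std}(R^\E)$. Your explicit Gr\"obner-basis justification that $\text{in}(I^\E)=\text{in}(I)^\E$ fills in a step the paper only asserts, and your observation that the matrix entries are general $\F$-linear forms in the $h_i$ (rather than literally from $\{0,h_1,\hdots,h_n\}$) is if anything the more careful statement.
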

\begin{proof}
We want to understand the intersection of a finite dimensional $\F$-algebra $R=\F[x_1,\hdots,x_n]/I$ with one hyperplane $V_h$ described by the equation $h(x)=h_1x_1+\hdots+h_nx_n=0$. The intersection involves quotienting out the ideal generated by $h$. This ideal is precisely the image of the multiplication map $T_h:R\rightarrow R$ mapping an element $f$ to $hf$. If $V_h$ is $(R,m)$-rich, that is $\text{dim}_\F R\sqcap V_h \ge m$, then the image of $T_h$ is required to be of dimension at most $(\text{dim}_\F R)-m$. This is the case because quotienting $R$ by the image of $T_h$ produces $R\sqcap V_h$. In other words, we require the rank of $T_h$ to be strictly less than $(\text{dim}_\F R)-m+1$. This condition is the same as writing the matrix of the map $T_h$ in any basis and requiring that all minors of $T_h$ of size $(\text{dim}_\F R)-m+1\times (\text{dim}_\F R)-m+1$ vanish. Each of these minors will be a polynomial in $\F[h_1,\hdots,h_n]$. We generate an ideal $J_m(R)$ out of them. By construction, $\V_\F(J_m(R))$ is the set of $(R,m)$-rich hyperplanes equations with coefficients in $\F$.

To prove the second claim, we can show that these minors are $(\text{dim}_\F R)-m+1$ degree homogenous polynomials in any basis. To make our life easier we take the basis of standard monomials $\text{Std}(R)$. The size of the basis set is $\text{dim}_\F R$. For any monomial $f\in \text{Std}(R)$, $T_h(f)=(h_1x_1+\hdots+h_nx_n)f=h_1x_1f+\hdots+h_nx_nf$. Any $x_if$ which is not a standard monomial will be $0$ in $R$. Thus we see that the entries in the matrix of the map $T_h$ come from the set $\{0,h_1,\hdots,h_n\}$. This immediately implies that the $(\text{dim}_\F R)-m+1\times (\text{dim}_\F R)-m+1$ minors are homogenous polynomials of degree $(\text{dim}_\F R)-m+1$ if they are non-zero.

We prove the third claim, by noting that the standard basis of monomial for $R$ and $R^\E$ are the same. This means the matrix defining $T_h$ is same for both rings in this basis. This means the minors of this matrix and hence the generators of $J_m(R)$ and $J_m(R^\E)$ are the same.
\end{proof}

We will use Theorem \ref{Jmdef} on a $(n-1,m)$-Hom-Furstenberg Algebra $R=\F_q[x_1,\hdots,x_n]/I$ to obtain the ideal $J_m(R)$. This ideal can be $\langle 0 \rangle$. 

Consider the finite dimensional algebra 
$$P=\F_q[x_1,\hdots,x_n]/\langle x_1,\hdots,x_n\rangle^d,$$ 
considered in Example \ref{constantBound}. We note for $\overline{P}=P^{\kF_q}=\overline{\F_q}[x_1,\hdots,x_n]/\langle x_1,\hdots,x_n\rangle^d$, where $\overline{\F_q}$ is the algebraic closure of $\F_q$, $J_m(\overline{P})=\langle 0\rangle$ for $m={d+n-2\choose n-1}$. This is the case because every hyperplane in $\overline{\F_q}^n$ is $(\overline{P},m)$-rich. This means that each polynomial in $J_m(\overline{P})$ vanishes on every point in $\overline{\F_q}^n$, which implies $J_m(\overline{P})=\langle 0\rangle$. Finally, item 3 of Proposition \ref{Jmdef} shows $J_m(P)=\langle 0\rangle$.

When $J_{l}(R)=\langle 0\rangle$ for some $l\ge 0$ we have the following bound.

\begin{thm}[$J_l(R)=\langle 0\rangle$ case]\label{thm:furstPlaneZero}
Given an $(n-1,l)$-Hom-Furstenberg Algebra $R=\F_q[x_1,\hdots,x_n]/I$ such that $J_l(R)=\langle 0\rangle$ for some $l>0$ we have the following bound,
$$\text{dim}_{\F_q} R\ge 4^{-1}l^{n/(n-1)}.$$
    
\end{thm}


$R$ being Furstenberg does imply that $\V_{\F_q}(J_m(R))$ will contain $\F_q^n\setminus \{0\}$ but that doesn't mean $J_m(R)$ is always the $\langle 0\rangle$ ideal. For example, consider $Q=\F_2[x_1,x_2]/I$ with $$I=\langle x_2,x_1^8\rangle\langle x_1+x_2,x_1^8\rangle\langle x_1,x_2^8\rangle,$$ 
considered in Example \ref{ex:mBoundReq}. It is easy to check that $\F^2_2\subseteq \V_{\F_2}(J_{10}(Q))$. But $J_{10}(Q)$ can't be $\langle 0 \rangle$. If it were then for $Q^{\F_{4}}=\F_{4}[x_1,x_2]/I$ we will also have $J_{10}(Q^{\F_4})=\langle 0 \rangle$ using Proposition \ref{Jmdef}. $J_{10}(Q^{\F_4})$ can't be $\langle 0 \rangle$. Take $f=x_1+ax_2$ with $a\in \F_{4}\setminus \F_{2}$. It is easy to check $|Q^{\F_4}/\langle f\rangle|<10$. This means $J_{10}(Q)$ is not $\langle 0\rangle$.

We also prove the following bound corresponding to the case when $J_l(R)\ne \langle 0\rangle$.

\begin{thm}[$J_l(R)\ne \langle 0\rangle$ case]\label{thm:furstPlaneNon}
Given an $(n-1,m)$-Hom-Furstenberg Algebra $R=\F_q[x_1,\hdots,x_n]/I$ such that $J_l(R)\ne 0$ for some $l\le m$ we have the following bound,
$$\text{dim}_{\F_q} R \ge qm\left(1-\frac{l-1}{m}\right).$$
\end{thm}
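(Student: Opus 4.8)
The plan is to distill from the hypothesis $J_l(R)\neq\langle 0\rangle$ a single nonzero homogeneous polynomial $P$ that is forced to vanish to large multiplicity at every point of $\F_q^n$, and then collide this with the multiplicity Schwartz--Zippel bound (Lemma~\ref{multSchwartz}). Write $N=\text{dim}_{\F_q}R$ and $D=N-l+1$. Since $J_l(R)\neq\langle 0\rangle$, item 2 of Theorem~\ref{Jmdef} tells us it is generated by the $D\times D$ minors of the multiplication matrix $T_h$ (written in the standard monomial basis), which are homogeneous of degree $D$; I would fix $P$ to be one such minor that is not the zero polynomial. Because $R$ is $(n-1,m)$-Hom-Furstenberg and $l\le m$, every linear hyperplane over $\F_q$ is $(R,m)$-rich, hence $(R,l)$-rich, so by item 1 of Theorem~\ref{Jmdef} we get $\V_{\F_q}(J_l(R))\supseteq\F_q^n\setminus\{0\}$. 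Thus $P$ vanishes at every nonzero point of $\F_q^n$, and being homogeneous of degree $D$ it vanishes at the origin with multiplicity exactly $D$.

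The heart of the argument is a rank-to-multiplicity estimate for the minors of $T_h$ --- this is precisely the spot where Ellenberg--Erman invoke Hochster--Huneke, and where I would instead use elementary linear algebra. I would prove: \emph{if $h^*\in\F_q^n$ gives an $(R,m)$-rich hyperplane $V_{h^*}$, then every $D\times D$ minor of $T_h$ vanishes at $h^*$ with multiplicity at least $m-l+1$.} The point is that $(R,m)$-richness of $V_{h^*}$ forces the constant matrix $T_{h^*}$ to have rank $r\le N-m$, since $\text{dim}_{\F_q}\left(R/\langle h^*_1x_1+\cdots+h^*_nx_n\rangle\right)=N-\text{rank}(T_{h^*})\ge m$. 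Using that $h\mapsto T_h$ is $\F_q$-linear (so $T_{h+h^*}=T_h+T_{h^*}$) I would translate the problem to $h^*=0$, then apply a constant invertible change of row and column basis to put the constant part $T_{h^*}$ into the normal form $\left(\begin{smallmatrix}I_r&0\\0&0\end{smallmatrix}\right)$; now $T_{h+h^*}$ is this matrix plus a matrix of linear forms (whose entries have zero constant term). In the Leibniz expansion of any $D\times D$ minor, each monomial is a product of $D$ entries, of which at most $r$ can contribute a nonzero constant (only the entries of the $I_r$ block can), so at least $D-r\ge m-l+1$ of the factors are linear forms with zero constant term; hence every monomial of the minor has degree $\ge m-l+1$, i.e.\ it vanishes at the origin to order $\ge m-l+1$. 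I then need to check that both reductions preserve the order of vanishing: translation commutes with Hasse derivatives, and a constant invertible row/column operation turns $D\times D$ minors into $\F_q$-linear combinations of $D\times D$ minors (Cauchy--Binet), while Hasse derivatives are linear (Lemma~\ref{derivativeRule}).

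With the claim in hand I would double-count $\Sigma:=\sum_{h^*\in\F_q^n}\text{mult}(P,h^*)$. Applying the claim at each of the $q^n-1$ nonzero points (all of which give $(R,m)$-rich hyperplanes) and adding the origin's contribution $\text{mult}(P,0)=D$ gives $\Sigma\ge D+(q^n-1)(m-l+1)$; on the other hand $P$ is nonzero of degree $D$, so Lemma~\ref{multSchwartz} with $U=\F_q$ gives $\Sigma\le Dq^{n-1}$. Combining, $(q^n-1)(m-l+1)\le D(q^{n-1}-1)$, and since $\tfrac{q^n-1}{q^{n-1}-1}\ge q$ (equivalently $q^n-1\ge q^n-q$) this yields $D=N-l+1\ge q(m-l+1)$, whence
\[\text{dim}_{\F_q}R\ \ge\ (l-1)+q(m-l+1)\ \ge\ q(m-l+1)\ =\ qm\Bigl(1-\tfrac{l-1}{m}\Bigr),\]
as claimed. (For $n=1$ the chain degenerates, but then the inequality reads $(q-1)(m-l+1)\le 0$, which is impossible, so the hypotheses are vacuous and there is nothing to prove.) I expect the rank-to-multiplicity claim to be the only genuine obstacle --- everything else is bookkeeping --- and within it the delicate part is verifying that the translation and change-of-basis steps really preserve the multiplicity statement and not merely the common zero set.
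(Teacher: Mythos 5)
Your proposal is correct, and its skeleton is exactly the paper's: pick a nonzero degree-$D$ generator of $J_l(R)$ (a $D\times D$ minor of $T_h$ in the standard monomial basis, $D=\dim_{\F_q}R-l+1$), show it vanishes to multiplicity at least $m-l+1$ at every point of $\F_q^n$ because every hyperplane is $(R,m)$-rich, and then apply Lemma~\ref{multSchwartz} with $U=\F_q$; note that this paper already makes the replacement of Hochster--Huneke by Schwartz--Zippel with multiplicities that you describe, so that part is not a departure. Where you genuinely diverge is in the proof of the rank-to-multiplicity claim (the paper's Lemma~\ref{MultiplicityMainLemma}): the paper views a minor as the composition $\text{Det}_L\circ U_L$, uses that the weight-$t$ Hasse derivatives of $\text{Det}_L$ are spanned by size-$(L-t)$ minors (which vanish at a rich point since $\mathrm{rank}\,T_{h^*}\le N-m$), and then invokes the composition bound $\text{mult}(f\circ g,a)\ge \text{mult}(f,g(a))$ of Lemma~\ref{lem:multComp}; you instead translate $h^*$ to the origin, normalize the constant part $T_{h^*}$ to $\left(\begin{smallmatrix}I_r&0\\0&0\end{smallmatrix}\right)$ by constant row/column operations, count in the Leibniz expansion that every monomial of every $D\times D$ minor has degree at least $D-r\ge m-l+1$, and return to the original minor via Cauchy--Binet and linearity of Hasse derivatives. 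Both arguments are elementary and correct; yours trades the composition lemma and the derivative-of-determinant fact for an explicit normal-form computation, which makes the multiplicity mechanism very concrete, while the paper's version is shorter once those two facts are granted. Your endgame is also marginally tighter (you credit the origin with multiplicity exactly $D$ rather than just $m-l+1$), though this does not change the final bound $\dim_{\F_q}R\ge q(m-l+1)+l-1\ge qm\bigl(1-\frac{l-1}{m}\bigr)$.
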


Theorem \ref{thm:furstPlaneZero} and \ref{thm:furstPlaneNon} immediately imply  Theorem \ref{thm:furstPlaneBound}.

\begin{proof}[Proof of Theorem \ref{thm:furstPlaneBound}]
Given an $(n-1,m)$-Furstenberg Algebra $R$, if $J_{\lceil m/2\rceil}(R)=0$, Theorem \ref{thm:furstPlaneZero} implies,
$$\text{dim}_{\F_q} R \ge \frac{m^{n/(n-1)}}{2^{n/(n-1)}4}\ge \frac{m^{n/(n-1)}}{16}.$$
If $J_{\lceil m/2 \rceil}(R)\ne 0$ then Theorem \ref{thm:furstPlaneNon} implies,
$$\text{dim}_{\F_q} R \ge \frac{mq}{2}\ge \frac{m^{n/(n-1)}}{2},$$
where in the last step we used the fact that $m\le q^{n-1}$.
\end{proof}
\section{Theorem \ref{thm:furstPlaneZero} using Borel stable ideals}\label{sec:IdealZero}

This section will use the notations and definitions of Section~\ref{prelim:genericInitial}. We recall a monomial ideal $K$ of the ring $\F[x_1,\hdots,x_n]$ is said to be Borel stable if, for any element $g$ in the Borel group $\B(n,\F)$  we have $\leftidx{^g}{\!K}{}=K$. Let $\kF_q$ be the algebraic closure of $\F_q$.

We will prove Theorem \ref{thm:furstPlaneZero} by first showing that moving to the algebraic closure of $\F_q$ still produces a Furstenberg Algebra. At that point, we use the Generic Initial ideal construction from Theorem~\ref{genericInitialThm} to produce a Furstenberg Algebra whose basis of standard monomials satisfies a nice combinatorial property. Using that combinatorial property we prove the required bound.

\begin{lem}[Extending to $\kF_q$]\label{extendingLemma}
Given $R=\F_q[x_1,\hdots,x_n]/I$ with $I$ homogenous such that $J_m(R)=\langle 0\rangle$, we have that the $\kF_q$-algebra $\kR=\kF_q[x_1,\hdots,x_n]/I$ is $(n-1,m)$-Hom-Furstenberg and a $\kF_q$-vector space of dimension $\text{dim}_{\F_q} R$. In other words, every hyperplane with coefficients in $\kF_q$ is $(\kR,m)$-rich.
\end{lem}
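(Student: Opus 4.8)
The plan is to prove the lemma in two parts corresponding to its two assertions. The dimension claim, $\dim_{\kF_q}\kR=\dim_{\F_q}R$, is the easy half: by Theorem~\ref{ThMono} the standard monomials $\text{Std}(R)$ form an $\F_q$-basis of $R$, and since the initial ideal $\text{in}(I)$ is computed purely combinatorially from the grlex order and does not change when we extend scalars (the generators of $I$ are unchanged, and taking initial terms and Gr\"obner reduction are field-independent operations), we have $\text{in}(I^{\kF_q})=\text{in}(I)$, hence $\text{Std}(\kR)=\text{Std}(R)$. Applying Theorem~\ref{ThMono} again over $\kF_q$ gives that the same finite monomial set is a $\kF_q$-basis of $\kR$, so the two dimensions agree.

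For the Furstenberg claim, the key observation is that richness of a hyperplane is detected by the vanishing of the minors defining $J_m$, and Theorem~\ref{Jmdef}(3) tells us $J_m(\kR)=J_m(R)^{\kF_q}$. First I would invoke the hypothesis $J_m(R)=\langle 0\rangle$: then $J_m(\kR)=\langle 0\rangle^{\kF_q}=\langle 0\rangle$ as well. Now by Theorem~\ref{Jmdef}(1) applied over the field $\kF_q$, the set of $(\kR,m)$-rich hyperplane equations with coefficients in $\kF_q$ is exactly $\V_{\kF_q}(J_m(\kR))=\V_{\kF_q}(\langle 0\rangle)=\kF_q^{\,n}$. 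In particular every nonzero $(h_1,\dots,h_n)\in\kF_q^{\,n}$ gives an $(\kR,m)$-rich hyperplane, which is precisely the statement that $\kR$ is $(n-1,m)$-Hom-Furstenberg. (We only need that $\V_{\kF_q}(J_m(\kR))$ contains all nonzero coefficient vectors; it trivially does since it is everything.)

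There is essentially no obstacle here beyond correctly chaining the cited results; the one point that deserves a sentence of care is the claim $\text{in}(I^{\kF_q})=\text{in}(I)$, i.e.\ that the standard monomials are insensitive to field extension. I would justify it by the same Gr\"obner-basis reasoning already implicit in the paper: if $\{g_1,\dots,g_t\}$ is a Gr\"obner basis of $I$ over $\F_q$ with respect to grlex (which exists since $\F_q[x_1,\dots,x_n]$ is Noetherian and Buchberger's algorithm runs over any field), then the $g_i$ still generate $I^{\kF_q}$, and a polynomial $f\in\kF_q[x_1,\dots,x_n]$ reduces to $0$ modulo the $g_i$ if and only if each coefficient-vector obtained during the division process — which involves only field arithmetic in the subfield generated by the coefficients of $f$ and the $g_i$ — does; since the reduction procedure is the same symbolic algorithm regardless of the ambient field, $\text{in}(I^{\kF_q})$ is generated by the same leading monomials $\text{in}(g_i)$ as $\text{in}(I)$. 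Alternatively, and more cleanly, one can cite Corollary~\ref{InDimPres} together with Theorem~\ref{Jmdef}(3): the latter already records that $R$ and $R^{\E}$ have the same standard monomials, so the dimension statement follows immediately without re-deriving the Gr\"obner-theoretic fact. I would use that shortcut to keep the proof short.
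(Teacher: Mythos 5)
Your proposal is correct and follows essentially the same route as the paper: the dimension claim via the invariance of standard monomials under field extension, and the Furstenberg claim via item~3 of Theorem~\ref{Jmdef} giving $J_m(\kR)=J_m(R)^{\kF_q}=\langle 0\rangle$ and hence $\V_{\kF_q}(J_m(\kR))=\kF_q^n$. Your Gr\"obner-basis justification of $\text{in}(I^{\kF_q})=\text{in}(I)$ merely fills in a detail the paper asserts without proof, so it is a welcome but not divergent addition.
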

\begin{proof}
Both $\kR=\kF_q[x_1,\hdots,x_n]/I$ and $R$ are spanned by monomials not in $\text{in}(I)$, that is the standard monomials of $R$ (or $\kR$). This shows $\text{dim}_{\kF_q} \kR=\text{dim}_{\F_q} R$.
By item three in Proposition \ref{Jmdef} we have $J_m(\kR)=J_m(R)^{\kF_q}=\langle 0 \rangle$. This means all hyperplanes in $\kF^n_q$ are $(\kR,m)$-rich.
\end{proof}

As $\kF_q$ is infinite we can use Theorem \ref{genericInitialThm}. Now, for the second step in our reduction.

\begin{lem}[Degeneration to Generic Initial ideal]\label{degenerationLemma}
Given an $(n-1,m)$-Hom-Furstenberg Algebra $R=\kF_q[x_1,\hdots,x_n]/I$, we define the $\kF_q$-Algebra $\hat{R}=\kF_q[x_1,\hdots,x_n]/K$ where $K=\text{GIN}(I)$ is the generic initial ideal of $I$. Then $\hat{R}$ is a finite dimensional $\kF_q$-algebra with dimension $\text{dim}_{\kF_q} R$ and is $(n-1,m)$-Hom-Furstenberg.
\end{lem}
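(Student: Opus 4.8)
The plan is to transfer the Hom-Furstenberg property through two operations: first passing from $I$ to $\leftidx{^g}{\!I}{}$ for a suitable generic $g \in \B(n,\kF_q)$ (which preserves the property by a symmetry argument), and then passing from $\leftidx{^g}{\!I}{}$ to $\text{in}(\leftidx{^g}{\!I}{}) = \text{GIN}(I)$ using the dimension-preserving machinery already established. The dimension claim is immediate: both $R$ and $\hat R$ have the standard monomials of $K = \text{GIN}(I)$ as a basis, and by Theorem \ref{genericInitialThm} together with Corollary \ref{InDimPres} and Lemma \ref{connectHDandIN}-style reasoning, $\text{dim}_{\kF_q} \hat R = \text{dim}_{\kF_q} R$. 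So the content is the Furstenberg property.

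First I would observe that applying an invertible linear change of coordinates permutes hyperplanes bijectively: if $h(x) = h_1 x_1 + \cdots + h_n x_n$ is any hyperplane equation and $g$ is invertible, then $\leftidx{^{g}}{\!h}{}(x) = h(xg)$ is again a hyperplane equation, and the map $h \mapsto \leftidx{^g}{\!h}{}$ is a bijection on the set of hyperplanes through the origin. Moreover $\text{dim}_{\kF_q}\left(\kF_q[x]/(\leftidx{^g}{\!I}{} + \langle h \rangle)\right) = \text{dim}_{\kF_q}\left(\kF_q[x]/(I + \langle \leftidx{^g}{\!h}{}\rangle)\right)$, because the automorphism $x \mapsto xg$ of $\kF_q[x_1,\dots,x_n]$ carries the ideal $I + \langle \leftidx{^{g}}{\!h}{}\rangle$ isomorphically onto $\leftidx{^{g}}{\!I}{} + \langle h\rangle$ (one checks $\leftidx{^g}{\!(\leftidx{^g}{\!h}{})}{}$ unwinds correctly, or simply uses that the substitution is a ring isomorphism preserving vector-space dimension of quotients). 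Since $R$ is $(n-1,m)$-Hom-Furstenberg, every hyperplane is $(R,m)$-rich; hence for every $h$, the quotient $\kF_q[x]/(\leftidx{^g}{\!I}{} + \langle h\rangle)$ has dimension $\ge m$, i.e. $R_g := \kF_q[x]/\leftidx{^g}{\!I}{}$ is also $(n-1,m)$-Hom-Furstenberg. This works for every $g \in \B(n,\kF_q)$. Now pick $g$ so that $q(g) \ne 0$ in the notation of Theorem \ref{genericInitialThm}, so that $\text{in}(\leftidx{^g}{\!I}{}) = \text{GIN}(I) = K$; such $g$ exists since $\kF_q$ is infinite and $q$ is a nonzero polynomial.

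It remains to pass from $R_g$ to $\hat R = \kF_q[x]/\text{in}(\leftidx{^g}{\!I}{})$. The key point is that $\leftidx{^g}{\!I}{}$ is homogeneous (it is generated by the $\leftidx{^g}{\!f}{}$ for $f$ ranging over a homogeneous generating set of $I$, and a linear substitution preserves homogeneity), so $\text{hd}(\leftidx{^g}{\!I}{}) = \leftidx{^g}{\!I}{}$, and Lemma \ref{DFrustLem} (reduction to Hom-Furstenberg) together with Lemma \ref{dilationLem} is not quite what is needed; instead I would mimic the proof of Lemma \ref{DFrustLem} but with $\text{in}$ in place of $\text{hd}$. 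Concretely: for any hyperplane equation $h$, its ideal $\langle h \rangle$ is a monomial-free degree-$1$ principal ideal, but more usefully $\text{in}(\langle h \rangle) = \langle \text{in}(h)\rangle$ is a single variable $x_i$ (after noting $\text{in}(h)$ is a coordinate term); the right tool is Lemma \ref{dilationLem}-analog: $\text{dim}_{\kF_q}(R_g/\langle h\rangle) \le \text{dim}_{\kF_q}(\text{in}(R_g)/\text{in}(\langle h\rangle))$, which follows exactly as in Lemma \ref{dilationLem}(2) using $\text{in}(\leftidx{^g}{\!I}{}) + \text{in}(\langle h\rangle) \subseteq \text{in}(\leftidx{^g}{\!I}{} + \langle h\rangle)$ and $\text{dim}$-preservation of $\text{in}$. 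Since $\text{GIN}(I) = \text{in}(\leftidx{^g}{\!I}{})$ is Borel-stable (Theorem \ref{genericInitialThm}(2)), and since $\text{in}(\langle h\rangle)$ is a single variable which, after a Borel change of coordinates within $\hat R$, can be taken to be $x_n$ — the "last" variable, which is exactly the one appearing among the initial terms of generic hyperplane equations — we conclude $\hat R$ is $(n-1,m)$-Hom-Furstenberg.

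The main obstacle I anticipate is the last step: carefully arguing that it suffices to check richness of $\hat R$ against the single monomial hyperplane $x_n$ (or the coordinate hyperplanes), exploiting Borel-stability of $K$ to move an arbitrary hyperplane to a coordinate one. The subtlety is that Borel-stability only gives invariance under upper-triangular $g$, so one must check that the orbit of $x_n$ under $\B(n,\kF_q)$, together with the dimension inequality from the $\text{in}$ operation, covers all hyperplanes — equivalently, that $\text{in}(h)$ for a generic hyperplane $h$ is $x_1$ (the grlex-largest variable) and one reduces via the transpose/opposite Borel action, or that one instead works with $x_n$ and the fact that every hyperplane can be degenerated to $\{x_n = 0\}$. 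Getting this bookkeeping exactly right, and confirming it yields the full $(n-1,m)$ property rather than richness against only one hyperplane, is where the care is needed; everything else is a routine combination of Theorem \ref{genericInitialThm}, Theorem \ref{ThMono}, and the dimension lemmas.
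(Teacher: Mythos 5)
Your proposal follows essentially the same route as the paper: twist $I$ by a $g\in\B(n,\kF_q)$ with $q(g)\neq 0$ (the substitution automorphism shows $\kF_q[x_1,\hdots,x_n]/\leftidx{^g}{\!I}{}$ is still $(n-1,m)$-Hom-Furstenberg --- modulo the small index slip that the quotient by $\leftidx{^g}{\!I}{}+\langle h\rangle$ matches the quotient by $I+\langle \leftidx{^{g^{-1}}}{\!h}{}\rangle$, not $I+\langle\leftidx{^{g}}{\!h}{}\rangle$, which does not affect the conclusion), then use the containment $\text{in}(\leftidx{^g}{\!I}{})+\langle x_i\rangle\subseteq\text{in}(\leftidx{^g}{\!I}{}+\langle x_i\rangle)$ together with Corollary \ref{InDimPres} to certify richness against coordinate hyperplanes, then invoke Borel stability of $K=\text{GIN}(I)$ for everything else. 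The one step you leave unresolved, and where your proposed fixes drift, is the last one. You should not try to reach a general hyperplane $h$ through $\text{in}(h)$, and no ``transpose/opposite Borel action'' is available ($\text{GIN}(I)$ is stable only under upper-triangular matrices); also, restricting attention to the single hyperplane $x_n$ (or $x_1$) is not enough, since the Borel orbit of one coordinate hyperplane does not cover all hyperplanes.

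The correct bookkeeping is: your inequality step already certifies richness of $\hat R$ against \emph{every} coordinate hyperplane $x_1,\hdots,x_n$ (take $h=x_i$, which is $(R_g,m)$-rich and satisfies $\text{in}(x_i)=x_i$), and this is exactly what is needed because the Borel orbit of the \emph{set} of coordinate hyperplanes is all hyperplanes. Concretely, for $h=h_1x_1+\hdots+h_nx_n\neq 0$ let $i_0$ be the largest index with $h_{i_0}\neq 0$, and let $g_h$ be the upper-triangular matrix equal to the identity except for the entries $-h_j/h_{i_0}$ in positions $(j,i_0)$ for $j<i_0$; then $\leftidx{^{g_h}}{\!h}{}=h_{i_0}x_{i_0}$. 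Since $\leftidx{^{g_h}}{\!K}{}=K$ by Theorem \ref{genericInitialThm}, the substitution automorphism gives $\dim_{\kF_q}\kF_q[x_1,\hdots,x_n]/(K+\langle h\rangle)=\dim_{\kF_q}\kF_q[x_1,\hdots,x_n]/(K+\langle x_{i_0}\rangle)\geq m$. This is precisely how the paper closes the argument (it asserts the existence of such a $g_h$ without the computation). Note that the coordinate hyperplane you land on is dictated by the support of $h$, not by its grlex-initial variable, so the $x_1$-versus-$x_n$ worry dissolves; with this step filled in, your proof is complete and coincides with the paper's.
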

\begin{proof}
From Theorem \ref{genericInitialThm} we know we can find a $g\in\B(n,\kF_q)$ such that $K=\text{in}(\leftidx{^g}{\!I}{})$. Using Corollary \ref{InDimPres} we have $\text{dim}_{\kF_q} \hat{R}=\text{dim}_{\kF_q} R$. 

We first show all coordinate hyperplanes are $(\hat{R},m)$-rich. For that, we pick hyperplane equations $h_i$ with coefficients in $\kF_q$ such that $\leftidx{^g}{\!h_i}{}=x_i$ for all $1\le i\le n$. As $R$ is $(n-1,m)$-Hom-Furstenberg we have $\text{dim}_{\kF_q} R\sqcap V_h \ge m$ where $V_{h_i}$ is the hyperplane defined by $h_i$. Noting the fact that $\text{in}(\leftidx{^g}{\!I}{})+\langle x_i\rangle \subseteq \text{in}(\leftidx{^g}{\!I}{}+\langle x_i\rangle)$ we have,
\begin{align*}
    \text{dim}_{\kF_q} \hat{R} \sqcap V_{x_i} =\text{dim}_{\kF_q} \frac{\kF_q[x_1,\hdots,x_n]}{(\text{in}(\leftidx{^g}{\!I}{})+\langle x_1\rangle )} &\ge \text{dim}_{\kF_q} \frac{\kF_q[x_1,\hdots,x_n]}{(\text{in}(\leftidx{^g}{\!I}{}+\leftidx{^g}{\!h_i}{}))}\\
    &=\text{dim}_{\kF_q} \frac{\kF_q[x_1,\hdots,x_n]}{I+\langle h_i\rangle } =\text{dim}_{\kF_q} R\cap V_{h_i} \ge m.
\end{align*}

For any hyperplane $V_f\in \kF_q^n$ with equation $f=f_1x_1+\hdots+f_nx_n$, it is easy to see that we can find an element $g_f\in \B(n,\kF_q)$ such that $\leftidx{^{g_f}}{\!f}{}$ is some coordinate hyperplane. 
Using Theorem \ref{genericInitialThm} we know $K$ is stable under the action of $g_f$. Using the fact that all coordinate hyperplanes are $(\hat{R},m)$-rich and $\leftidx{^{g_f}}{\!f}{}$ is a coordinate hyperplane we have,
$$\text{dim}_{\kF_q} \hat{R}\cap V_f=\text{dim}_{\kF_q} \frac{\kF_q[x_1,\hdots,x_n]}{K+\langle f\rangle}=\text{dim}_{\kF_q} \frac{\kF_q[x_1,\hdots,x_n]}{\leftidx{^{g_f}}{\!K}{}+\langle \leftidx{^{g_f}}{\!f}{}\rangle}=\text{dim}_{\kF_q} \frac{\kF_q[x_1,\hdots,x_n]}{K+\langle x_i\rangle}\ge m.$$
\end{proof}

For the third step, we consider subsets of the lattice $\Z_{\ge 0}^n$ with a simple geometric property. We let $\ce_i$ be the $i$th standard basis vector in an $n$ dimensional vector space. That is, $\ce_i$ is a vector of length $n$, with $1$ at position $i$ and $0$ everywhere else.

\begin{define}[Borel Exchange Property]
A subset $\Lambda$ of the set of non-negative lattice points $\Z_{\ge 0}^n$ has the {\em Borel Exchange Property (BEP)} if, for all $1 \le j<i\le n$ and  point $\lambda=(\lambda_1,\hdots,\lambda_n)\in \Lambda$ with $\lambda_i=0$, all lattice points of the form $\lambda+l(\ce_i-\ce_j)$ with $0\le l\le \lambda_j$ are in $\Lambda$. In other words, the intersection of $\Z_{\ge 0}^n$ with the ray starting from $\lambda$ in the direction $\ce_i-\ce_j$ is in $\Lambda$.
\end{define}

Lattices with BEP arise naturally from Borel stable monomial ideals. 

\begin{lem}\label{monoLattice}
Given a finite dimensional $\F$-algebra $R=\F[x_1,\hdots,x_n]/K$ where $K$ is a Borel stable monomial ideal, the set of vectors $\lambda=(\lambda_1,\hdots,\lambda_n)$, such that $x^\lambda=x_1^{\lambda_1}\hdots x_n^{\lambda_n}\in \text{Std}(R)$, forms a subset $\Lambda$ of $\Z_{\ge 0}^n$ which has the Borel Exchange Property.   
\end{lem}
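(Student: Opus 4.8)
The plan is to translate the algebraic statement about Borel stable monomial ideals into a purely combinatorial statement about the complement lattice, and then exploit the characterization of Borel stability via the action of elementary unipotent matrices. The key observation is that the Borel group is generated by diagonal matrices together with the elementary matrices $I + c\,E_{ji}$ for $j < i$ (adding a multiple of row $i$ to row $j$, say), and the diagonal matrices fix every monomial ideal. So $K$ being Borel stable is equivalent to $\leftidx{^g}{\!K}{} = K$ for each such elementary generator $g$. Hence the whole content is: understanding what $\leftidx{^g}{\!x^\mu}{}$ looks like when $g = I + c\,E_{ji}$.

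First I would compute this action explicitly. Writing $h = \leftidx{^g}{\!x^\mu}{}(x) = x^\mu(xg)$, the substitution sends $x_j \mapsto x_j$ for all coordinates except that $x_i \mapsto x_i + c x_j$ (the exact placement of $c$ depends on the row/column convention in the paper's definition $\leftidx{^g}{\!f}{}(x) = f(xg)$, but up to renaming indices it is this). Therefore $\leftidx{^g}{\!x^\mu}{} = x_1^{\mu_1}\cdots (x_i + c x_j)^{\mu_i} \cdots x_n^{\mu_n}$, which expands as $\sum_{l=0}^{\mu_i} \binom{\mu_i}{l} c^l\, x^{\mu - l\ce_i + l\ce_j}$ (again up to the direction convention — one should check which of $\ce_i - \ce_j$ vs $\ce_j - \ce_i$ appears; I will match it to the BEP definition, which uses $\lambda + l(\ce_i - \ce_j)$ starting from a $\lambda$ with $\lambda_i = 0$, i.e. it moves weight from the $j$-coordinate into the $i$-coordinate). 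Next, the standard argument: suppose $x^\mu \in \text{in}(K)$, i.e. $x^\mu$ is a monomial generator-type element of the monomial ideal $K$ (so $x^\mu \in K$ itself, since $K$ is monomial). Then $\leftidx{^g}{\!x^\mu}{} \in \leftidx{^g}{\!K}{} = K$, and since $K$ is a monomial ideal, every monomial appearing in $\leftidx{^g}{\!x^\mu}{}$ with nonzero coefficient lies in $K$. Over a field where the relevant binomial coefficients $\binom{\mu_i}{l}$ are nonzero this is immediate; to handle arbitrary (possibly positive) characteristic one should instead run the argument through the generic initial ideal / Borel-stability formulation which already builds in genericity of $c$ — but here $K$ is assumed Borel stable over $\F$ outright, so one picks $c$ generic in $\F$ (or passes to an infinite extension, observing $K^\E$ is still Borel stable with the same standard monomials) so that no cancellation occurs and all $\mu - l\ce_i + l\ce_j \in \text{in}(K)$ for $0 \le l \le \mu_i$ provided $\mu_i \le$ whatever; more precisely we get that if $x^\mu \in \text{in}(K)$ and $j < i$, then $x^{\mu + \ce_j - \ce_i} \in \text{in}(K)$ whenever $\mu_i \ge 1$.

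Now I would contrapose to get the statement about $\Lambda = \text{Std}(R)$ (the complement lattice). Take $\lambda \in \Lambda$ with $\lambda_i = 0$, fix $j < i$, and let $0 \le l \le \lambda_j$; I claim $\lambda + l(\ce_i - \ce_j) \in \Lambda$. Suppose not, i.e. $\mu := \lambda + l(\ce_i - \ce_j) \in \text{in}(K)$ with $\mu_i = l \ge 1$ (the case $l = 0$ is just $\lambda \in \Lambda$, given). Applying the exchange step just established repeatedly — at each stage moving one unit from coordinate $i$ back to coordinate $j$ — yields $x^{\mu + (l)(\ce_j - \ce_i)} = x^\lambda \in \text{in}(K)$, contradicting $\lambda \in \Lambda = \text{Std}(R)$. (One must check the intermediate monomials all still have positive $i$-th coordinate so the exchange step applies at each stage; since we decrement from $l$ down to $1$ this is fine.) That establishes BEP.

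The main obstacle I anticipate is bookkeeping rather than depth: pinning down the exact index/direction conventions from the paper's definition $\leftidx{^g}{\!f}{}(x) = f(xg)$ so that the elementary matrix really produces the substitution moving weight in the $\ce_i - \ce_j$ direction that matches the stated BEP (getting this backwards would "prove" the transpose statement). The secondary subtlety is characteristic: ensuring that the binomial coefficients don't all vanish and kill the monomials we need. The clean fix is to note that $K$ Borel stable over $\F$ implies $K^{\E}$ is Borel stable over any extension $\E$ with the same standard monomials (by the same reasoning used in Lemma~\ref{extendingLemma}), so we may assume $\F$ infinite, pick a single scalar $c \in \F$ avoiding the finitely many bad values (zeros of the finitely many coefficient polynomials that could cause cancellation among the relevant monomials), and then the expansion of $\leftidx{^g}{\!x^\mu}{}$ genuinely contains every $x^{\mu - l\ce_i + l\ce_j}$, $0 \le l \le \mu_i$, with nonzero coefficient; since $K$ is a monomial ideal, all of these lie in $\text{in}(K)$, which is exactly what the argument above needs.
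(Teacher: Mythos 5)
Your overall strategy---reduce Borel stability to the action of elementary upper-triangular matrices and transfer the resulting monomial containments to the complement lattice---is the same as the paper's, but there is a genuine gap exactly where you flag the characteristic issue, and your proposed fix does not repair it. The intermediate exchange step you extract, namely that $x^\mu \in K$ with $\mu_i \ge 1$ and $j<i$ forces $x^{\mu + \ce_j - \ce_i} \in K$, is false over fields of positive characteristic, which is the only case the paper ever uses (the lemma is applied over $\kF_q$ in the proof of Theorem \ref{thm:furstPlaneZero}). The coefficient of $x^{\mu - l\ce_i + l\ce_j}$ in the expansion of $\leftidx{^g}{\!x^\mu}{}$ is $\binom{\mu_i}{l}c^l$; the monomials appearing are pairwise distinct, so there is no cancellation to avoid, and choosing $c$ generically or passing to an infinite extension cannot make $\binom{\mu_i}{l}$ nonzero when it vanishes in $\F$ (extensions do not change the characteristic). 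Concretely, over $\kF_2$ the ideal $K=\langle x_1^2,x_2^2\rangle$ is Borel stable, since $(x_2+cx_1)^2 = x_2^2+c^2x_1^2 \in K$, yet $x_2^2\in K$ while $x_1x_2\notin K$, refuting your one-unit exchange claim; Borel-fixed monomial ideals in characteristic $p$ are strictly weaker than strongly stable ones. Hence your iteration ``move one unit at a time from coordinate $i$ back to coordinate $j$'' can stall whenever an intermediate exponent of $x_i$ is divisible by the characteristic, and the proof as written only goes through in characteristic zero.

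The statement itself survives because the Borel Exchange Property needs only the extreme term of the binomial expansion, and this is exactly how the paper argues: assuming for contradiction that $x^{\lambda + l(\ce_i - \ce_j)}\in K$ with $\lambda_i=0$, apply the single elementary matrix sending $x_i\mapsto x_i+x_j$; since the exponent of $x_i$ in that monomial is exactly $l$, the pure $x_j^{\,l}$ term of $(x_i+x_j)^{l}$ has coefficient $\binom{l}{l}=1$, so $x^\lambda$ occurs with coefficient $1$ in the image polynomial, which lies in the monomial ideal $K$; therefore $x^\lambda\in K$ in every characteristic, contradicting $\lambda\in\Lambda$. Your argument becomes correct if you replace the unit-step iteration by this single full swap, performed directly on the monomial $x^{\lambda+l(\ce_i-\ce_j)}$ whose $i$-exponent equals $l$; no genericity of $c$ and no field extension are needed.
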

\begin{proof}
By definition we have $\lambda=(\lambda_1,\hdots,\lambda_n)\in \Lambda$ if and only if $x^\lambda=x_1^{\lambda_1}\hdots x_n^{\lambda_n}\not\in K$. 
Take some $\lambda\in \Lambda$ such that $\lambda_i=0$. Assume for contradiction, there exists some $j<i$ and $l\le \lambda_j$ such that $\lambda+l(\ce_i-\ce_j)\not\in \Lambda$. This means $x^{\lambda+l(\ce_i-\ce_j)}=x_1^{\lambda_1}\hdots  x_j^{\lambda_j-l} \hdots x_i^{\lambda_i+l}\hdots x_n^{\lambda_n}\in K$. We can find an elementary upper triangular matrix $b$ in the Borel subgroup $\B(n,\F)$ such that $\leftidx{^b}{x}{}=x+x_j\ce_i$. As $x^{\lambda+l(\ce_i-\ce_j)}\in K$, $K$ being Borel stable implies $(\leftidx{^b}{x}{})^{\lambda+l(\ce_i-\ce_j)}=x_1^{\lambda_1}\hdots x_j^{\lambda_j-l} \hdots (x_i+x_j)^{\lambda_i+l}\hdots x_n^{\lambda_n}\in K$. Using the binomial expansion and the fact that $K$ is a monomial ideal we have $x^\lambda\in K$. But this implies $\lambda\not\in\Lambda$ leading to a contradiction.
\end{proof}

The next bound for lattices with BEP contains the main combinatorial argument.

\begin{lem}[Lattice bound]\label{latticeBound}
Given a set $\Lambda\subseteq \Z_{\ge 0}^n$ with the Borel Exchange Property, let $\Lambda_n$ be the subset of $\Lambda$ lying on the plane $\lambda_n=0$. If $|\Lambda_n|\ge m$ then 
$$|\Lambda|\ge 4^{-1} m^{n/(n-1)}.$$
\end{lem}

Before proving this Lemma, we first show how it implies Theorem~\ref{thm:furstPlaneZero}.

\begin{proof}[Proof of Theorem \ref{thm:furstPlaneZero}]
We start with an algebra $R=\F_q[x_1,\hdots,x_n]/I$ with $J_{l}(R)=\langle 0\rangle$. We extend to the algebraic closure using Lemma \ref{extendingLemma} to obtain a $(n-1,l)$-Hom-Furstenberg Algebra $\kR=\kF_q[x_1,\hdots,x_n]/I$ of dimension $\text{dim}_{\F_q} R$. Next, we use Lemma \ref{degenerationLemma} to obtain a $(n-1,l)$-Hom-Furstenberg Algebra $\hat{R}=\kF_q[x_1,\hdots,x_n]/K$ of dimension $\text{dim}_{\F_q} R$ where $K$ is a Borel stable monomial ideal. 

$\text{Std}(\hat{R})$ forms a basis of $\hat{R}$ and by Lemma \ref{monoLattice} this produces a lattice $\Lambda$ of size $\text{dim}_{\F_q} R$ with the Borel Exchange Property. The basis of $\hat{R}/\langle x_n\rangle$ is precisely the subset of monomials in $\text{Std}(\hat{R})$ which are not divisible by $x_n$.  Within $\Lambda$ they are precisely the subset $\Lambda_n$ of points $\lambda$ in $\Lambda$ lying on the plane $\lambda_n=0$. As $\hat{R}$ is $(n-1,l)$-Hom-Furstenberg, it has intersection of dimension at least $l$ with $x_n=0$. This implies $|\Lambda_n|\ge l$. Finally, Lemma \ref{latticeBound} gives us the required bound.
\end{proof}

\subsection{Proof of Lemma \ref{latticeBound}}

The proof involves using the Borel Exchange Property to find a subset of points in $\Lambda$ by applying the exchange property on points in $\Lambda_n$. We will show each point in $\Lambda_n$ will produce as many points as its weight and that any point in $\Lambda$ is generated by at most $n-1$ points in $\Lambda_n$. This lets us derive a lower bound for $\Lambda$.

We have $|\Lambda_n|\ge m \ge 1$. For any non-negative real $r$ and integer $a$ let 
\begin{align*}
{r \choose a}=\frac{r(r-1)(r-2)\hdots(r-a+1)}{a!}.
\end{align*}
 We can find a $d\ge 0$ such that 
\begin{align}
    m={n-1+d \choose n-1}.\label{eq:mFormShort}
\end{align}
Let $d'=\lfloor d \rfloor$. Then we can also write $m$ as
\begin{align}
    m={n-1+d' \choose n-1}+\beta {n-1+d' \choose n-2},\label{eq:mFormExpand}
\end{align}
where $\beta\in [0,1]$.

We will split the proof in 3 parts. First, for each point $\lambda \in \Lambda_n$ we will define a subset $\PATH(\lambda)\subseteq \Lambda$ associated with it and prove some simple properties of these sets. Next, we will lower bound the size of the union of these subsets to lower bound $|\Lambda|$. Finally, we will analyse the expression for $D(n,m)$ in our lower bound.

{\bf a) Definition of $\PATH$:} For any point $\lambda=(\lambda_1,\hdots,\lambda_{n-1},0)\in \Lambda_n$ we will produce points in $\Lambda$ and collect them in a set called $\PATH(\lambda)$. We will construct $\PATH(\lambda)$ in $n-1$ stages. In stage $1$, we start at $\lambda=P_1(\lambda)$ and we move along the direction $\ce_n-\ce_{n-1}$ until we hit the hyperplane $\lambda_{n-1}=0$ producing the lattice points $(\lambda_1,\lambda_2,\hdots,\lambda_{n-1}-l,l)\in \Lambda$ for all $0< l\le \lambda_{n-1}$, as $\Lambda$ is Borel fixed. We do not include $l=0$ to avoid repetition if $\lambda_{n-1}=0$. We collect these points in the set $\PATH(\lambda,1)$. We note $|\PATH(\lambda,1)|=\lambda_{n-1}$. At the end of stage $1$ we are at the point $P_2(\lambda)=(\lambda_1,\hdots,\lambda_{n-2},0,\lambda_{n-1})\in \Lambda$. 

In general, at stage $i$ we start at the point $P_i(\lambda)=(\lambda_1,\hdots,\lambda_{n-i},0,\lambda_{n-i+1},\hdots,\lambda_{n-1})$. We then move along the direction $\ce_{n-i+1}-\ce_{n-i}$ until we hit the hyperplane $\lambda_{n-i}=0$ producing lattice points $(\lambda_1,\hdots,\lambda_{n-i}-l,l,\lambda_{n-i+1},\hdots,\lambda_{n-1})\in \Lambda$ for $0<l\le \lambda_{n-i}$. We collect these points in $\PATH(\lambda,i)$ which has size $\lambda_{n-i}$. At the end of stage $i$ we are at the point $P_{i+1}(\lambda)=(\lambda_1,\hdots,\lambda_{n-i-1},0,\lambda_{n-i},\hdots,\lambda_{n-1})$. There are $n-1$ stages. We set $\PATH(\lambda)=\bigcup_{i=1}^{n-1}\PATH(\lambda,i)$. We note, 
\begin{align}
|\PATH(\lambda)|=\sum_{i=1}^{n-1} |\PATH(\lambda,i)|=\sum_{i=1}^{n-1} \lambda_i=\text{wt}(\lambda).\label{eq:wtPathEq}
\end{align}
For distinct points $\lambda$ and $\mu$ in $\Lambda_n$, $\PATH(\lambda)$ and $\PATH(\mu)$ may not be disjoint. For example, consider the points $\lambda=(2,0,2,0)$ and $\mu=(0,2,2,0)$. It is easy to see $\PATH(\lambda)$ and $\PATH(\mu)$ both contain $(0,2,0,2)$. We are going to show a single point will appear in $\PATH(\lambda)$ for at most $n-1$ points $\lambda$ in $\Lambda_n$ using the following claim.

\begin{claim}
For $\alpha\in \bigcup_{\lambda \in \Lambda_n}\PATH(\lambda)$ and each $1\le i\le n-1$, there is at most one point $\lambda \in \Lambda_n$ such that $\alpha \in \PATH(\lambda,i)$.
\end{claim}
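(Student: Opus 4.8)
The plan is to analyze the structure of the sets $\PATH(\lambda,i)$ directly from their construction, and show that each point $\alpha$ determines the unique $\lambda \in \Lambda_n$ (if one exists) for which $\alpha \in \PATH(\lambda, i)$. The key observation is that stage $i$ of the construction touches only the coordinates $n-i$ and $n-i+1$: at stage $i$ we start from $P_i(\lambda) = (\lambda_1,\hdots,\lambda_{n-i},0,\lambda_{n-i+1},\hdots,\lambda_{n-1})$ and move in the direction $\ce_{n-i+1} - \ce_{n-i}$, so every point of $\PATH(\lambda,i)$ has the form $(\lambda_1,\hdots,\lambda_{n-i-1},\lambda_{n-i}-l,\,l,\,\lambda_{n-i+1},\hdots,\lambda_{n-1})$ for some $0 < l \le \lambda_{n-i}$.

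First I would extract from this formula exactly how to read off $\lambda$ from such a point $\alpha = (\alpha_1,\hdots,\alpha_n)$. Since all coordinates except positions $n-i$ and $n-i+1$ are unchanged, and the two altered coordinates sum to $\lambda_{n-i}$ (namely $(\lambda_{n-i}-l) + l = \lambda_{n-i}$), I can recover $\lambda$ as follows: $\lambda_t = \alpha_t$ for $t \notin \{n-i, n-i+1\}$; $\lambda_{n-i} = \alpha_{n-i} + \alpha_{n-i+1}$; and $\lambda_{n-i+1} = \alpha_{n-i+1 + 1} $ — wait, one must be careful here — actually $\lambda_{n-i+1}$ sits in position $n-i+1$ before stage $i$ but after stages $1,\dots,i-1$ have already moved things around, so I should re-derive this from $P_i(\lambda)$ rather than from $\lambda$ directly. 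Writing $\alpha$ in terms of $P_i(\lambda)$: positions $1,\dots,n-i-1$ of $\alpha$ equal $\lambda_1,\dots,\lambda_{n-i-1}$; positions $n-i,n-i+1$ of $\alpha$ are $\lambda_{n-i}-l$ and $l$; positions $n-i+2,\dots,n$ of $\alpha$ equal $\lambda_{n-i+1},\dots,\lambda_{n-1}$. Thus given $\alpha$ and $i$, the candidate $\lambda$ is forced: $\lambda_t = \alpha_t$ for $1 \le t \le n-i-1$, $\lambda_{n-i} = \alpha_{n-i} + \alpha_{n-i+1}$, and $\lambda_{n-i-1+s} \mapsto$ — more cleanly, $\lambda_{n-i+j} = \alpha_{n-i+1+j}$ for $1 \le j \le i-1$, and $\lambda_n = 0$. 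This is a well-defined function of $(\alpha, i)$, which immediately gives at most one such $\lambda$.

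The remaining routine check is that this recovered tuple is indeed a single tuple with no freedom: the only ambiguity one might fear is the split of $\lambda_{n-i}$ into $\lambda_{n-i}-l$ and $l$, but both pieces are visible as separate coordinates of $\alpha$, so $l = \alpha_{n-i+1}$ is determined and hence so is $\lambda_{n-i} = \alpha_{n-i}+l$. I would present this as: suppose $\alpha \in \PATH(\lambda,i) \cap \PATH(\mu,i)$; apply the formula above to conclude $\lambda = \mu$ coordinate by coordinate. I do not expect a genuine obstacle here — the statement is essentially bookkeeping about which coordinates each stage modifies. The one place to be careful is indexing (the roles of $n-i$ versus $n-i+1$, and the off-by-one in where previously-moved coordinates land), so I would set up the notation for $P_i(\lambda)$ and $\PATH(\lambda,i)$ very explicitly at the start and then the claim falls out. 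The point of the claim in the larger argument is that, summing over $i = 1,\dots,n-1$, each point of $\bigcup_\lambda \PATH(\lambda)$ is covered at most $n-1$ times, which combined with $|\PATH(\lambda)| = \text{wt}(\lambda)$ from \eqref{eq:wtPathEq} will yield $|\Lambda| \ge \frac{1}{n-1}\sum_{\lambda \in \Lambda_n} \text{wt}(\lambda)$.
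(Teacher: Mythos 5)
Your proposal is correct and is essentially the same argument as the paper's: the paper also observes that for $\alpha \in \PATH(\lambda,i)$ one can read off $\lambda$ coordinate-by-coordinate, namely $\lambda_j=\alpha_j$ for $j\le n-i-1$, $\lambda_{n-i}=\alpha_{n-i}+\alpha_{n-i+1}$, and $\lambda_j=\alpha_{j+1}$ for $n-i+1\le j\le n-1$, forcing any two candidates to coincide. Your indexing in the recovery formula matches the paper's, so no further changes are needed.
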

\begin{proof}
Say there exist two such points $\lambda=(\lambda_1,\hdots,\lambda_{n-1},0)$ and $\mu=(\mu_1,\hdots,\mu_{n-1},0)$. Let $\alpha=(\alpha_1,\hdots,\alpha_n)$. As $\lambda$ and $\mu$ produced $\alpha$ in stage $i$, there exist $l_1$ and $l_2$ such that
\begin{align*}
    \alpha &= (\alpha_1,\hdots,\alpha_{n-i},\alpha_{n-i+1},\alpha_{n-i+2}\hdots,\alpha_{n})\\
    &= (\lambda_1,\hdots,\lambda_{n-i}-l_1,l_1,\lambda_{n-i+1},\hdots,\lambda_{n-1}) \\
    &= (\mu_1,\hdots,\mu_{n-i}-l_2,l_2,\mu_{n-i+1},\hdots,\mu_{n-1}).
\end{align*}
This implies $\lambda_j=\mu_j=\alpha_j$ for $1\le j\le n-i-1$, $\lambda_{j}=\mu_j=\alpha_{j+1}$ for $n-i+1\le j\le n-1$, and $\lambda_{n-i}=\mu_{n-i}=\alpha_{n-i}+\alpha_{n-i+1}$. This proves $\lambda=\mu$.
\end{proof}

This claim then implies there are at most $n-1$ points $\lambda$ in $\Lambda_n$ such that $\alpha \in \PATH(\lambda)$. This gives us the following lower bound,
\begin{align}
 \Big|\bigcup_{\lambda\in \Lambda_n} \PATH(\lambda)\Big|\ge \frac{1}{n-1}\Big(\sum\limits_{\lambda\in \Lambda_n} |\PATH(\lambda)|\Big)    .\label{eq:unionLatticeBound}
\end{align}

{\bf b) Lower bound on union of $\PATH(\lambda)$:} We have at least $m$ points in $\Lambda_n$. We want to lower bound the number of points in the set $\Lambda$ by lower bounding the size of its subset $\bigcup_{\lambda\in \Lambda_n} \PATH(\lambda)$. Equation \ref{eq:unionLatticeBound} shows that it suffices to lower bound $\sum_{\lambda\in \Lambda_n} |\PATH(\lambda)|$. We find a lower bound by computing its minimum possible value. Equation \ref{eq:wtPathEq} shows that $|\PATH(\lambda)|=\text{wt}(\lambda)$. We know there are ${n-2+k \choose n-2}$ many points with weight $k$. Equation \ref{eq:mFormExpand} shows $m$ is of the form,
\begin{align}
    m={n-1+d' \choose n-1} + \beta {n-1+d' \choose n-2} =\sum\limits_{k=0}^{d'} {n-2+k \choose n-2} + \beta {n-1+d' \choose n-2}.\label{eq:mFormExpand2}
\end{align}
To minimize $\sum_{\lambda\in \Lambda_n} |\PATH(\lambda)|=\sum_{\lambda\in \Lambda_n} \text{wt}(\lambda)$, $\Lambda_n$ needs to contain points with as small weight as possible. This implies that for the minimizer $\Lambda_n$ should contain all points of weight at most $d'$ and only a $\beta$ fraction of points with weight $d'+1$. This gives us the following bound,
\begin{align*}
|\Lambda|&\ge \Big|\bigcup_{\lambda\in \Lambda_n} \PATH(\lambda)\Big|\\
&\ge \frac{1}{n-1}\left(\sum\limits_{\lambda\in \Lambda_n} |\PATH(\lambda)|\right)\\
&\ge \frac{1}{n-1}\left(\sum\limits_{k=0}^{d'} k{n-2+k \choose n-2} + \beta (d'+1){n-1+d' \choose n-2}\right)\\
  &= \frac{1}{n-1}\left(\sum\limits_{k=1}^{d'} (n-1){n-1+k-1 \choose n-1} + \beta (n-1){n-1+d' \choose n-1}\right)\\
  &={n-1+d' \choose n} + \beta {n-1+d' \choose n-1}=\frac{d'+\beta n}{n}{n-1+d' \choose n-1}.\numberthis \label{eq:LambdaBoundMid}
\end{align*}
Rearranging Equation \ref{eq:mFormExpand} gives us,
$${n-1+d' \choose n-1}=m\Big(\frac{d'+1}{d'+1+\beta (n-1)}\Big).$$
This allows us to write Equation \ref{eq:LambdaBoundMid} as,
\begin{align*}
    |\Lambda|&\ge m\frac{d'+\beta n}{n}\frac{d'+1}{d'+1+\beta (n-1)}\\
    &= m^{n/(n-1)}{n-1+d' \choose n-1}^{-1/(n-1)}\frac{d'+\beta n}{n}\Big(\frac{d'+1}{d'+1+\beta (n-1)}\Big)^{n/(n-1)}.
\end{align*}
This equation has the form we need to prove the theorem. To complete the proof, we examine,
\begin{align}
D(n,m) &= {n-1+d' \choose n-1}^{-1/(n-1)}\frac{d'+\beta n}{n}\Big(\frac{d'+1}{d'+1+\beta (n-1)}\Big)^{n/(n-1)}.\label{eq:DnmEquation}
\end{align}


{\bf c) Analysing $D(n,m)$: } First, we want to prove $D(n,m)\ge 1/4$. We note it suffices to do this while assuming $m\ge 4^{n-1}$ because when $m<4^{n-1}$ setting $D(n,m)=1/4$ we have $|\Lambda|\ge m \ge m^{n/(n-1)}/4$. We also note $m\ge 4^{n-1}$ forces $d>n$. This is because if $d\le n$ then $m\le {2n-1 \choose n-1}\le 4^{n-1}$. Using Equation \ref{eq:DnmEquation} we have,
\begin{align*}
    D(n,m)&={n-1+d' \choose n-1}^{-1/(n-1)}\frac{d'+\beta n}{n}\Big(\frac{d'+1}{d'+1+\beta (n-1)}\Big)^{n/(n-1)}\\
    &={n-1+d' \choose n-1}^{-1/(n-1)}\frac{(d'+1)^{n/(n-1)}}{n}\frac{d'+\beta n}{(d'+1+\beta (n-1))^{n/(n-1)}}\numberthis \label{eq:DnmAnalyseBet} \\
    &=\frac{(n-1)!^{1/(n-1)}}{n} \frac{d'+1}{\prod_{k=1}^{n-1} (d'+k)^{1/(n-1)}} \Big(\frac{d'+1}{d'+1+\beta (n-1)}\Big)^{1/(n-1)}\frac{d'+\beta n}{d'+1+\beta (n-1)}.\numberthis \label{eq:DnmAnalyse} 
\end{align*}
To help analyse $D(n,m)$, we want to show $D(n,m)$ is increasing in $d'$ and $\beta$. 


\begin{claim}
The expression of $D(n,m)$ is increasing in $d'$.
\end{claim}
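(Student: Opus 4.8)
The goal is to show that the function
\[
D(n,m) = {n+d'-1 \choose n-1}^{-1/(n-1)}\frac{d'+\beta n}{n}\Big(\frac{d'+1}{d'+1+\beta(n-1)}\Big)^{n/(n-1)}
\]
is increasing in $d'$ (for fixed $n$ and $\beta$, with $d' \ge 0$ an integer here, though one could also treat $d'$ as a real parameter). The natural strategy is to compute the ratio $D(n,m)$ at $d'+1$ versus at $d'$ and show it is at least $1$. Using the product form in Equation \eqref{eq:DnmAnalyse}, most factors are manifestly monotone: the term $\frac{(n-1)!^{1/(n-1)}}{n}$ is constant, and I would isolate the two genuinely $d'$-dependent pieces, namely $\frac{d'+1}{\prod_{k=1}^{n-1}(d'+k)^{1/(n-1)}}$ together with the remaining rational factors involving $\beta$.

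First I would handle the $\beta$-free skeleton: set $\beta$ aside (or note the $\beta$-dependent factors $\big(\tfrac{d'+1}{d'+1+\beta(n-1)}\big)^{1/(n-1)}\cdot\tfrac{d'+\beta n}{d'+1+\beta(n-1)}$ are each individually increasing in $d'$, which follows by checking that $\frac{d'+1}{d'+1+c}$ and $\frac{d'+a}{d'+b}$ with $a \ge b$ — here $a = \beta n \ge \beta(n-1)+1$ is NOT always true, so care is needed — are increasing). The cleaner route is to write $D(n,m) = \frac{(n-1)!^{1/(n-1)}}{n}\cdot A(d')\cdot B(d',\beta)$ where $A(d') = (d'+1)\big/\prod_{k=1}^{n-1}(d'+k)^{1/(n-1)}$ and $B$ collects the $\beta$-terms, and verify monotonicity of each factor separately. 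For $A$, since $\prod_{k=1}^{n-1}(d'+k) = (d'+1)(d'+2)\cdots(d'+n-1)$, one has $A(d') = (d'+1)^{1 - 1/(n-1)}\big/\big(\prod_{k=2}^{n-1}(d'+k)\big)^{1/(n-1)}$ — wait, that needs rechecking — so it is cleanest to take logs: $\ln A(d') = \ln(d'+1) - \frac{1}{n-1}\sum_{k=1}^{n-1}\ln(d'+k)$, and show the forward difference (or derivative in the real-variable relaxation) is nonnegative. The derivative is $\frac{1}{d'+1} - \frac{1}{n-1}\sum_{k=1}^{n-1}\frac{1}{d'+k}$, which is $\ge 0$ precisely because $\frac{1}{d'+1} \ge \frac{1}{d'+k}$ for every $k \ge 1$, so the average of the $\frac{1}{d'+k}$ is at most $\frac{1}{d'+1}$. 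That settles the skeleton.

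The main obstacle, and the step I would spend the most care on, is the interaction between the $\beta$-terms and the binomial coefficient term when $d'$ increments — because incrementing $d'$ by $1$ while keeping $m$ fixed is not the operation here; rather $d'$ and $\beta$ are being treated as free and one wants monotonicity in $d'$ at fixed $\beta$. So I must verify that $B(d',\beta) = \big(\tfrac{d'+1}{d'+1+\beta(n-1)}\big)^{1/(n-1)}\cdot\tfrac{d'+\beta n}{d'+1+\beta(n-1)}$ is increasing in $d'$ for each fixed $\beta \in [0,1]$. Taking logs and differentiating in $d'$, I need $\frac{1}{n-1}\big(\frac{1}{d'+1} - \frac{1}{d'+1+\beta(n-1)}\big) + \frac{1}{d'+\beta n} - \frac{1}{d'+1+\beta(n-1)} \ge 0$. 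The first bracket is nonnegative; the remaining two terms compare $d'+\beta n$ with $d'+1+\beta(n-1) = d'+\beta n + (1-\beta)$, so $\frac{1}{d'+\beta n} - \frac{1}{d'+1+\beta(n-1)} \ge 0$ as well since $1-\beta \ge 0$. Hence every factor is nondecreasing in $d'$ and the product is too. I would then remark that this monotonicity, combined with the analogous (and separately proved) monotonicity in $\beta$, shows $D(n,m)$ as defined by \eqref{eq:DnmEquation} extends to the claimed non-decreasing function $d_n$ of the real parameter $m$, via the parametrization $m = {n-1+d \choose n-1}$ with $d' = \lfloor d\rfloor$ and $\beta$ the fractional interpolation weight from \eqref{eq:mFormExpand}.
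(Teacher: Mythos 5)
Your proof is correct and follows essentially the same route as the paper: you use the same factorization of $D(n,m)$ from Equation \eqref{eq:DnmAnalyse} into the constant $\frac{(n-1)!^{1/(n-1)}}{n}$, the factor $(d'+1)\big/\prod_{k=1}^{n-1}(d'+k)^{1/(n-1)}$, and the $\beta$-dependent factors, and show each is nondecreasing in $d'$. The only difference is cosmetic: you verify monotonicity of each factor by a log-derivative computation, whereas the paper rewrites each factor as $1$ minus a quantity that is decreasing in $d'$ (e.g.\ $\frac{d'+\beta n}{d'+1+\beta(n-1)}=1-\frac{1-\beta}{d'+1+\beta(n-1)}$), which amounts to the same comparisons.
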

\begin{proof}
In Equation \ref{eq:DnmAnalyse}, as $\beta\le 1$ we have $(d'+\beta n)/(d'+1+\beta (n-1))=1-(1-\beta)/(d'+1+\beta (n-1))$ is increasing in $d'$. $(d'+1)/(d'+1+\beta (n-1))=1-\beta(n-1)/(d'+1+\beta (n-1))$ is also increasing in $d'$. $(d'+1)/(\prod_{k=1}^{n-1} (d'+k)^{1/(n-1)})=\prod_{k=1}^{n-1} \Big(1-(k-1)/(d'+k)\Big)^{1/(n-1)}$ is also increasing in $d'$. This shows $D(n,m)$ is increasing in $d'$.
\end{proof}

\begin{claim}
The expression of $D(n,m)$ is increasing in $\beta$ for $\beta \le 1$.
\end{claim}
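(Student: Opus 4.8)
The plan is to isolate the dependence of $D(n,m)$ on $\beta$ using the form already recorded in Equation~\eqref{eq:DnmAnalyseBet}. There the prefactor ${n+d'-1 \choose n-1}^{-1/(n-1)}(d'+1)^{n/(n-1)}/n$ depends only on $n$ and $d'$, and $d'=\lfloor d\rfloor$ is held fixed while we vary $\beta$, so it is a strictly positive constant for this purpose. Hence it suffices to show that
$$h(\beta)=\frac{d'+\beta n}{\bigl(d'+1+\beta(n-1)\bigr)^{n/(n-1)}}$$
is non-decreasing on $[0,1]$.

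Next I would just differentiate. Writing $A=A(\beta)=d'+\beta n$ and $B=B(\beta)=d'+1+\beta(n-1)$, so that $A'=n$, $B'=n-1$, and $h=AB^{-n/(n-1)}$, the product and chain rules give
$$h'(\beta)=B^{-\frac{n}{n-1}-1}\Bigl(A'B-\tfrac{n}{n-1}AB'\Bigr)=B^{-\frac{n}{n-1}-1}\bigl(nB-nA\bigr)=nB^{-\frac{n}{n-1}-1}(B-A).$$
Since $d'\ge 0$ we have $B=d'+1+\beta(n-1)>0$, and $B-A=\bigl(d'+1+\beta(n-1)\bigr)-\bigl(d'+\beta n\bigr)=1-\beta\ge 0$ for $\beta\le 1$. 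Therefore $h'(\beta)\ge 0$ on $[0,1]$ (with strict inequality for $\beta<1$), so $h$, and hence $D(n,m)$, is increasing in $\beta$ for $\beta\le 1$.

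There is essentially no obstacle here beyond bookkeeping: one must track which of the factors of $D(n,m)$ actually involve $\beta$ (treating $n$ and $d'$ as constants) and note that the denominator base $B$ is positive. An alternative would be to differentiate the three $\beta$-dependent factors appearing in Equation~\eqref{eq:DnmAnalyse} separately, but the clean cancellation $nB-\tfrac{n}{n-1}(n-1)A=n(B-A)=n(1-\beta)$ above makes the single-step computation the simplest route. Combined with the preceding claim (monotonicity in $d'$), this shows that the expression in Equation~\eqref{eq:DnmEquation}, viewed as a function of a real $m$ via \eqref{eq:mFormShort}--\eqref{eq:mFormExpand}, is a well-defined non-decreasing function $d_n$, which is property~2 of Lemma~\ref{latticeBound}; properties~1 and~3 then follow by evaluating at the appropriate thresholds using the monotonicity in $d'$.
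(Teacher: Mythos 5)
Your proof is correct and takes essentially the same approach as the paper: both isolate the $\beta$-dependent factor $(d'+\beta n)/\bigl(d'+1+\beta(n-1)\bigr)^{n/(n-1)}$ from Equation~\eqref{eq:DnmAnalyseBet} and show its derivative in $\beta$ is a positive multiple of $1-\beta\ge 0$. The only (immaterial) difference is that the paper differentiates the logarithm of this factor while you differentiate it directly via the product and chain rules.
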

\begin{proof}
In Equation \ref{eq:DnmAnalyseBet}, consider the term $(d'+\beta n)/(d'+1+\beta (n-1))^{n/(n-1)}$. Taking logarithm we get $\log(d'+\beta n)-n\log(d'+1+\beta (n-1))/(n-1)$ which is a function in $\beta$. Taking derivative with respect to $\beta$ gives us $n/(d'+\beta n)-n/(d'+1+\beta(n-1))=n(1-\beta)/((d'+\beta n)(d'+1+\beta(n-1)))\ge 0$. This shows $D(n,m)$ is increasing in $\beta$ for $\beta\le 1$.
\end{proof}

We have the condition $d>n$ and $\beta\in [0,1]$. To find a lower bound for $D(n,m)$ we set $d'=n$ and $\beta=0$ in Equation \ref{eq:DnmEquation} to get the bound,
$$D(n,m)\ge {2n-1 \choose n-1}^{-1/(n-1)}\ge 1/4.$$

\qed

In the argument above it is easy to show for a fixed $n$ that $\lim_{m\rightarrow \infty} D(n,m)= (n-1)!^{1/(n-1)}/n$. We now note Lemma~\ref{latticeBound} is asymptotically optimal upto constants, as the algebra $\F[x_1,\hdots,x_n]/\langle x_1,\hdots,x_n\rangle^m$ produces a lattice with the Borel Fixed Property and from the argument in Example \ref{constantBound} we have $D(n,m)\le (n-1)!^{1/(n-1)}/n$. 
\section{Theorem \ref{thm:furstPlaneNon} using Method of Multiplicities}\label{sec:MultCase}
To prove Theorem \ref{thm:furstPlaneNon} we prove for any $(n-1,m)$-Hom-Furstenberg Algebra $R$, every polynomial in $J_k(R)$ vanishes on every point in $\V_{\F_q}(J_m(R))$ with high multiplicity. Finally, we use the Schwartz-Zippel bound to obtain the desired bound.

We recall that if $f\in J_l(R)$ and a hyperplane with equation $h$ is $(R,l)$-rich then $f(h)=0$. In other words, $\text{mult}(f,h)\ge 1$. The next lemma proves that if $h$ is $(R,m)$-rich for $m>l$ then $f$ vanishes on $h$ with higher multiplicity.
\begin{lem}\label{MultiplicityMainLemma}
Given a finite dimensional $\F$-algebra $R=\F[x_1,\hdots,x_n]/I$, an $(R,m)$-rich hyperplane $V_h$ with equation $h:h_1x_1+\hdots+h_nx_n=0$, and a polynomial $f\in J_l(R)\subseteq \F[h_1,\hdots,h_n]$ for $0\le l\le m$ we have,
$$\text{mult}(f,h)\ge m-l+1.$$
\end{lem}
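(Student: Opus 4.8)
The plan is to reduce the multiplicity statement at a general rich hyperplane $h$ to a statement about vanishing of ordinary minors, by exhibiting $f$ as a value of a polynomial map whose source has the structure needed to apply Lemma \ref{lem:multComp} on multiplicities of compositions. Concretely, recall from the proof of Theorem \ref{Jmdef} that, working in the basis of standard monomials $\mathrm{Std}(R)$, the multiplication operator $T_h\colon R\to R$ has a matrix $M(h_1,\dots,h_n)$ whose entries all lie in $\{0,h_1,\dots,h_n\}$, i.e.\ $M$ is a matrix of \emph{linear forms} in the $h_i$. The ideal $J_l(R)$ is generated by the $(N-l+1)\times(N-l+1)$ minors of $M$, where $N=\dim_\F R$, and $J_m(R)$ by the $(N-m+1)\times(N-m+1)$ minors. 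If $V_h$ is $(R,m)$-rich then $\dim_\F(R/\langle h\rangle)\ge m$, so $\mathrm{rank}\,M(h)\le N-m$. The goal is: every $(N-l+1)$-minor of $M$ vanishes at such a point $h$ to order at least $m-l+1$.

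The key step is a purely linear-algebraic fact about matrices of linear forms: \emph{if $A$ is any matrix over a field, $\mathrm{rank}(A)=r$, and we perturb $A\mapsto A+E$ by a matrix $E$ of linear forms in variables $z$, then every $(r+s)\times(r+s)$ minor of $A+E$, viewed as a polynomial in $z$, vanishes at $z=0$ to order at least $s$.} This is because any size-$(r+s)$ submatrix of $A$ itself has rank $\le r$, so by row/column operations it can be brought to a form with at least $s$ zero rows; expanding the determinant of the corresponding submatrix of $A+E$ along those rows shows every monomial in its expansion carries at least $s$ factors from $E$, each of which is a linear form, hence has no constant term. Thus the determinant lies in the $s$-th power of the maximal ideal $\langle z\rangle$, i.e.\ vanishes to order $\ge s$ at the origin. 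I would apply this with $A = M(h)$ (the specialization at the given rich $h$), $E = M(h+z) - M(h)$ (linear in $z=(z_1,\dots,z_n)$ since $M$ has linear entries), $r = \mathrm{rank}\,M(h)\le N-m$, and $r+s = N-l+1$, giving $s \ge (N-l+1)-(N-m) = m-l+1$. Since $f\in J_l(R)$ is an $\F$-combination of such $(N-l+1)$-minors $g_\alpha$ of $M$, and each $g_\alpha(h+z)$ vanishes to order $\ge m-l+1$ at $z=0$, so does $f(h+z)$; by definition of multiplicity via the Hasse expansion $f(h+z)=\sum_i f^{(i)}(h)z^i$ this says exactly $\mathrm{mult}(f,h)\ge m-l+1$. (A slicker route avoiding a hand-rolled perturbation lemma is to invoke Lemma \ref{lem:multComp} with the substitution $h\mapsto h+z$, but one still needs the underlying fact that the $(N-l+1)$-minors of $M$, as polynomials in the $h_i$, vanish to order $\ge m-l+1$ along the locus $\{\mathrm{rank}\le N-m\}$; the cleanest self-contained argument is the row-reduction one above.)

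The main obstacle I anticipate is making the "expand along the zero rows" argument clean when the size-$(r+s)$ submatrix of $A$ does not literally have $s$ zero rows but only rank $r$: one must either perform the Gaussian elimination over $\F$ first (which changes $A+E$ by multiplying on left/right by constant invertible matrices, an operation that does not decrease the order of vanishing since it only takes $\F$-linear combinations of the rows/columns of $A+E$, hence of entries that individually vanish to the right order after the corresponding combination of the $A$-part is killed), or phrase everything via the statement that the determinant of $A+E$ restricted to that submatrix lies in $I(A)\cdot(\dots) + \langle z\rangle^s$ and $I(A)=0$ after specialization. I would write the perturbation lemma as a standalone sub-lemma and prove it by the change-of-basis reduction to the case where the top-left $r\times r$ block of $A$ is the identity and everything else in those rows and columns is zero, then Laplace-expand. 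Everything else — the identification of $M$ as a linear-form matrix, the arithmetic $N-(N-l+1)$ versus $m-l$, and passing from minors to a general element of $J_l(R)$ via linearity of Hasse derivatives (Lemma \ref{derivativeRule}) — is routine.
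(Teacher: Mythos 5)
Your proposal is correct, and it reaches the conclusion by a somewhat different mechanism than the paper. Both arguments start from the same setup (the matrix $M(h)$ of $T_h$ in the standard-monomial basis has linear-form entries, $J_l(R)$ is generated by its size-$(N-l+1)$ minors, and richness forces $\mathrm{rank}\,M(h)\le N-m$), but the paper then views each minor as a composition $\mathrm{Det}_L\circ U_L$, uses the fact that the weight-$t$ Hasse derivatives of the determinant polynomial are spanned by size-$(L-t)$ minors (which vanish at $U_L(h)$ because the rank is small), and transfers the resulting multiplicity through Lemma \ref{lem:multComp}. You instead prove a standalone perturbation lemma: for $A$ of rank $r$ and $E$ a matrix of linear forms in $z$, every $(r+s)$-minor of $A+E$ lies in $\langle z\rangle^{s}$, shown by left-multiplying by a constant invertible matrix that kills $s$ rows of the $A$-part and then Laplace/Leibniz-expanding; applied to $A=M(h)$, $E=M(z)$ this gives the vanishing order $m-l+1$ directly, bypassing both the derivatives-of-determinant fact and the composition lemma. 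The two routes are of comparable length; yours is slightly more self-contained (it needs no facts about Hasse derivatives of $\mathrm{Det}_L$), while the paper's reuses machinery it has already set up. One small slip: an element $f\in J_l(R)$ is an $\F[h_1,\dots,h_n]$-combination of the minors, not an $\F$-linear combination; this is harmless, since $g_\alpha(h+z)\in\langle z\rangle^{m-l+1}$ and $\langle z\rangle^{m-l+1}$ is an ideal, so $f(h+z)=\sum_\alpha p_\alpha(h+z)\,g_\alpha(h+z)$ still lies in it, which is exactly the (equally implicit) step the paper takes when it says the generators suffice.
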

\begin{proof}
Recall, $T_h:R\rightarrow R$ defined in the proof of Theorem \ref{Jmdef} is the multiplication map, mapping $f\in R$ to $(h_1x_1+\hdots+h_nx_n)f\in R$. $J_l(R)$ is generated by $L=(\text{dim}_\F R)-l+1$ sized minors of the matrix of $T_h$ in any basis. Fix any basis and let the matrix in this case be $U(h)$ with entries which are polynomials in $h_1,\hdots,h_n$. Consider a formal $L\times L$ matrix $Y$ with formal variable entries $y_{ij}$ for $1\le i,j\le L$. Let $\text{Det}_L$ be a polynomial over the variables $y_{ij}$ obtained by taking the determinant of $Y$. 

Any $(\text{dim}_\F R)-l+1$ minor is the composition of $\text{Det}_L$ and a $L\times L$ matrix of polynomials $U_L(h)$ corresponding to some $L\times L$ sub-matrix of $U(h)$. Given a $(R,m)$-rich hyperplane $V_g$ with equation $g(x)=g_1x_1+\hdots+g_nx_n$, all minors of the matrix of $U(g)$ of size $M=(\text{dim}_\F R)-m+1$ vanish. As $U_L(g)$ is a submatrix, all its minors of size $M<L$ also vanish. It is not hard to check that the weight $t$ Hasse derivatives of the determinant polynomial $\text{Det}_N$ of a matrix are generated by minors of the matrix of size $N-t$. This means all the Hasse derivatives of $\text{Det}_N$ of weight strictly less than $m-l$ vanish on $U_L(g)$. This means $\text{mult}(\text{Det}_N, U_L(g)) \ge m-l+1$. Using Lemma \ref{lem:multComp} we have $\text{mult}(\text{Det}_N\circ U_L,g)\ge \text{mult}(\text{Det}_N, U_L(g)) \ge m-l+1$. As the polynomials $\text{Det}_N\circ U_L$ for different submatrices $U_L$ of $U$ generate $J_l(R)$ we are done.

\end{proof}

We finally prove Theorem \ref{thm:furstPlaneNon}.

\begin{proof}[Proof of Theorem \ref{thm:furstPlaneNon}]
 Given a $(n-1,m)$-Hom-Furstenberg algebra $R$ over the field $\F_q$, we know $J_{l}(R)$ contains a non-zero polynomial $f$ of degree $(\text{dim}_{\F_q} R)-l+1$. Lemma \ref{MultiplicityMainLemma} tells us $f$ vanishes on the equations of all $(R,m)$-rich hyperplanes with multiplicity $m-l+1$. We know all hyperplanes in $\F^n_q$ are $(R,m)$-rich. This means $f$ vanishes on $\F^n_q\setminus \{0\}$ with multiplicity at least $m-l+1$. As $f$ is a homogenous polynomial of degree $(\text{dim}_{\F_q} R)-l+1>m-l+1$ it is easy to check $f$ also vanishes on the origin with multiplicity at least $m-l+1$. Finally, using Lemma \ref{multSchwartz} we have
$$q^{n}(m-l+1) \le((\text{dim}_{\F_q} R)-l+1)q^{n-1}.$$ 
Rearranging, we get
$$\text{dim}_{\F_q} R \ge mq\left(1-\frac{l-1}{m}\right).$$
\end{proof}

\section{Hyper-Furstenberg Bound }\label{sec:higherdegree}

In this section we will prove Theorem \ref{thm:hyperBound} by reducing it to \ref{thm:furstPlaneBound}. The proof involves two steps first we reduce to some problem over homogenous Hyper-Furstenberg Algebras. Finally, we will show these algebras are also Furstenberg Algberas with appropriate parameters.

\begin{define}[Homogenous Hyper-Furstenberg Algebra] A finite dimensional $\F$-algebra $R=\F[x_1,\hdots,x_n]/I$ with $I$ homogenous is said to be $(k,m,d)$-Hyper-Hom-Furstenberg, if for all  $k$ dimensional subspaces $U\subseteq \F_q^n$ there exists linearly independent hyperplanes $h_1,\hdots,h_{n-k}$ whose intersection equals $U$ and $d_1,\hdots,d_{n-k}\in \N,\prod_{i=1}^{n-k}d_i\le d$ such that, $\langle h_1^{d_1},\hdots,h_{n-k}^{d_{n-k}}\rangle$ is $(R,m)$-rich. 
\end{define}

\begin{lem}[Reduction to Hyper Hom Furstenberg] Given a finite dimensional $(k,m,d)$-Hyper-Furstenberg Algebra $R=\F[x_1,\hdots,x_n]/I$, $\text{hd}(R)=\F[x_1,\hdots,x_n]/\text{hd}(I)$ is a $(k,m,d)$-Hyper-Hom-Furstenberg Algebra such that {\em $\text{dim}_\F \text{hd}(R)=\text{dim}_\F R$.}
\end{lem}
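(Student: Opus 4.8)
The plan is to follow the proof of Lemma~\ref{DFrustLem} almost verbatim, letting Lemma~\ref{dilationLem} do all the work. The dimension equality $\text{dim}_\F \text{hd}(R) = \text{dim}_\F R$ is immediate from the first item of Lemma~\ref{dilationLem}, so the only substantive point is the Hyper-Hom-Furstenberg property: for every hyperplane equation $h(x) = h_1 x_1 + \hdots + h_n x_n$ (which we may take to be nonzero, since a zero linear form does not define a hyperplane), we must show that $h^d$ is $(\text{hd}(R), m)$-rich, i.e.\ that $\text{dim}_\F\left(\text{hd}(R)/\langle h^d\rangle\right) \ge m$.

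First I would invoke the hypothesis that $R$ is $(m,d)$-Hyper-Furstenberg: there is a polynomial $g$ of degree at most $d-1$ with $h^d + g$ being $(R,m)$-rich, so $\text{dim}_\F\left(R/\langle h^d+g\rangle\right) \ge m$. The key algebraic observation is the identity
\[
\text{hd}\left(\langle h^d + g\rangle\right) = \langle h^d \rangle .
\]
Indeed, since $\text{hd}$ is multiplicative over a field ($\text{hd}(pq) = \text{hd}(p)\,\text{hd}(q)$, as already used in the proof of Lemma~\ref{connectHDandIN}) and $\deg g < d = \deg(h^d)$, every element $p\,(h^d+g)$ of the principal ideal $\langle h^d+g\rangle$ has top-degree part $\text{hd}(p)\cdot h^d$; taking $p=1$ shows $h^d \in \text{hd}(\langle h^d+g\rangle)$, while every such generator lies in $\langle h^d\rangle$, so the two ideals coincide.

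The conclusion then drops out by applying the second item of Lemma~\ref{dilationLem} with the ideal $J = \langle h^d+g\rangle$:
\[
m \;\le\; \text{dim}_\F\left(R/\langle h^d+g\rangle\right) \;\le\; \text{dim}_\F\left(\text{hd}(R)/\text{hd}(\langle h^d+g\rangle)\right) \;=\; \text{dim}_\F\left(\text{hd}(R)/\langle h^d\rangle\right),
\]
which says precisely that $h^d$ is $(\text{hd}(R),m)$-rich. No step here is delicate; the only place warranting a moment's care is the identity $\text{hd}(\langle h^d+g\rangle) = \langle h^d\rangle$, and in particular ensuring the correction polynomial $g$ has degree strictly below $d$ so that the top-degree component of $h^d+g$ is genuinely $h^d$.
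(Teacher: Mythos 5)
Your proof is correct and follows essentially the same route as the paper's: both reduce the claim to the identity $\text{hd}(\langle h^d+g\rangle)=\langle h^d\rangle$ and then apply the two parts of Lemma~\ref{dilationLem} to get $m\le \dim_\F\left(R/\langle h^d+g\rangle\right)\le \dim_\F\left(\text{hd}(R)/\langle h^d\rangle\right)$. The only difference is that you spell out the justification of that identity (multiplicativity of $\text{hd}$ and $\deg g<d$), which the paper simply asserts.
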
\label{lem:hyperHomFurst}
\begin{proof}
Given a $k$ dimensional subspace $U$, let $h_1,\hdots,h_{n-k}$ be linearly independent hyperplanes whose intersection is $U$ and $g_i, 1\le i\le n-k$ be polynomials of degree $d_i-1$ such that $\langle h_1^{d_1}+g_1,\hdots,h_{n-k}^{d_{n-k}}+g_{n-k}\rangle$ is $(R,m)$-rich and $\prod_{i=1}^{n-k}d_i \le d$. We note that, $\text{hd}(\langle h_1^{d_1}+g_1,\hdots,h_{n-k}^{d_{n-k}}+g_{n-k}\rangle)=\langle h_1^{d_1},\hdots,h_{n-k}^{d_{n-k}}\rangle$.
The second claim of Lemma~\ref{dilationLem} now implies
\begin{align*}
m\le \text{dim}_\F \left(R/\langle h_1^{d_1}+g_1,\hdots,h_{n-k}^{d_{n-k}}+g_{n-k}\rangle\right)& \le \text{dim}_\F \left(\text{hd}(R)/\text{hd}(\langle h_1^{d_1}+g_1,\hdots,h_{n-k}^{d_{n-k}}+g_{n-k}\rangle)\right)\\
&=\text{dim}_\F \left(\text{hd}(R)/\langle h_1^{d_1},\hdots,h_{n-k}^{d_{n-k}}\rangle\right).
\end{align*}

This shows $\langle h_1^{d_1},\hdots,h_{n-k}^{d_{n-k}}\rangle$ is $(\text{hd}(R),m)$-rich.
\end{proof}

\begin{lem}[Reduction to Theorem~\ref{AlgebraBound}]\label{lem:hyperToPlane}
Any $(k,m,d)$-Hyper-Hom-Furstenberg Algebra $R=\F[x_1,\hdots,x_n]/I$ is a $(k,m/d)$-Hom-Furstenberg Algebra.
\end{lem}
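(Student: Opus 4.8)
The plan is to show that for any $(n-1)$-dimensional linear subspace $V$ (a hyperplane through the origin) defined by an equation $h(x) = h_1 x_1 + \hdots + h_n x_n$, the intersection $R \sqcap V = R/\langle h\rangle$ has dimension at least $m/d$. Since $R$ is $(m,d)$-Hyper-Hom-Furstenberg, we already know that $h^d$ is $(R,m)$-rich, i.e. $\dim_\F \left(R/\langle h^d\rangle\right) \ge m$. So the task reduces to comparing $\dim_\F\left(R/\langle h\rangle\right)$ with $\dim_\F\left(R/\langle h^d\rangle\right)$.

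The key observation is that quotienting by $\langle h^d\rangle$ can be achieved by quotienting by $\langle h\rangle$ repeatedly, $d$ times, in a suitable filtration sense. Concretely, consider the chain of ideals in $R$ given by $\langle h^d\rangle \subseteq \langle h^{d-1}\rangle \subseteq \hdots \subseteq \langle h \rangle \subseteq R$, and the associated successive quotients $\langle h^{i}\rangle / \langle h^{i+1}\rangle$ for $0 \le i \le d-1$ (with $h^0 = 1$). I would argue that each such quotient $\langle h^i\rangle/\langle h^{i+1}\rangle$ is, as an $\F$-vector space, a quotient of $R/\langle h\rangle$: the map $R \to \langle h^i\rangle/\langle h^{i+1}\rangle$ sending $f \mapsto h^i f \bmod \langle h^{i+1}\rangle$ is surjective and kills $\langle h\rangle$ (since $f \in \langle h\rangle$ gives $h^i f \in \langle h^{i+1}\rangle$), hence factors through $R/\langle h\rangle$. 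Therefore $\dim_\F\left(\langle h^i\rangle/\langle h^{i+1}\rangle\right) \le \dim_\F\left(R/\langle h\rangle\right)$ for every $i$. Summing the dimensions along the filtration,
\begin{align*}
\dim_\F\left(R/\langle h^d\rangle\right) = \sum_{i=0}^{d-1} \dim_\F\left(\langle h^i\rangle/\langle h^{i+1}\rangle\right) \le d \cdot \dim_\F\left(R/\langle h\rangle\right).
\end{align*}
Combining this with $\dim_\F\left(R/\langle h^d\rangle\right) \ge m$ gives $\dim_\F\left(R/\langle h\rangle\right) \ge m/d$, which is exactly the statement that $V$ is $(R, m/d)$-rich. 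Since $V$ was an arbitrary linear hyperplane and $I$ is homogeneous, $R$ is $(n-1, m/d)$-Hom-Furstenberg.

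The main point to be careful about is the exactness of the filtration, i.e. that the dimensions of the successive quotients genuinely add up to $\dim_\F\left(R/\langle h^d\rangle\right)$; this is just additivity of dimension in a finite chain of subspaces of a finite-dimensional vector space, applied to $\langle h^i\rangle \supseteq \langle h^{i+1}\rangle$ inside $R$, so there is no real obstacle. One should also note $m/d$ need not be an integer, but as remarked elsewhere in the paper (e.g. the end of the proof of Lemma~\ref{recurseBound}) one may take ceilings freely since all the bounds in play are monotone in the richness parameter, so this causes no difficulty.
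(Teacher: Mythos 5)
Your proof is correct, but it follows a genuinely different route from the paper's. The paper changes coordinates so that $h=x_1$ and then counts standard monomials: richness of $x_1^d$ forces at least $m$ monomials of $\text{Std}(R)$ to have $x_1$-degree below $d$, and since deleting the $x_1$-power from a standard monomial yields again a standard monomial under an at most $d$-to-one map, at least $m/d$ standard monomials are $x_1$-free, which is then read as $x_1$ being $(R,m/d)$-rich. You instead filter $R$ by the chain $R\supseteq\langle h\rangle\supseteq\langle h^2\rangle\supseteq\cdots\supseteq\langle h^d\rangle$ and observe that multiplication by $h^i$ gives a surjection from $R/\langle h\rangle$ onto each successive quotient $\langle h^i\rangle/\langle h^{i+1}\rangle$, so that $m\le \text{dim}_\F R/\langle h^d\rangle\le d\cdot \text{dim}_\F R/\langle h\rangle$. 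Your version is purely linear-algebraic: it needs no base change and no monomial order, and it actually proves the more general inequality $\text{dim}_\F R/\langle f\rangle\ge \frac{1}{d}\,\text{dim}_\F R/\langle f^d\rangle$ for an arbitrary element $f$ of a finite-dimensional algebra. It also bypasses the one delicate point in the monomial count, namely that $x_1$-free standard monomials of $R$ are not automatically linearly independent in $R/\langle x_1\rangle$ (the initial ideal of $I+\langle x_1\rangle$ can strictly contain $\text{in}(I)+\langle x_1\rangle$), a step the paper leaves implicit. What the paper's route buys is that it stays within the standard-monomial bookkeeping used throughout (e.g.\ in Theorem \ref{Jmdef}); what yours buys is brevity, coordinate-freeness and generality. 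Your closing remarks are also fine: dimension is additive along a finite chain of subspaces of a finite-dimensional space, and since the richness definition only asks for $\text{dim}_\F R\sqcap V\ge m/d$, no integrality of $m/d$ is needed.
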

\begin{proof}
Given any $U\subseteq \F_q^n$ dimensional subspaces there exists linearly independent hyperplanes $h_1,\hdots,h_{n-k}$ whose intersection equals $U$ and $d_1,\hdots,d_{n-k}\in \N,\prod_{i=1}^{n-k}d_i\le d$ such that $\langle h_1^{d_1},\hdots,d_{n-k}^{n-k}\rangle$ is $(R,m)$-rich. We can perform a base change over $\F$ such that $h_i=x_i$ for $1\le i\le n-k$. Take the basis of standard monomials $\text{Std}(R)$. 

$\langle x_1^{d_1},\hdots,x_{n-k}^{d_{n-k}}\rangle$ being $(R,m)$-rich implies that there are at least $m$ monomials in $\text{Std}(R)$ such that the degree of $x_i$ in each of these monomials is strictly less than $d_i$ for $1\le i\le n-k$. We note if $x_1^{\lambda_1}x_2^{\lambda_2}\hdots x_n^{\lambda_n}$ is a standard monomial then so is $x_{n-k+1}^{\lambda_{n-k+1}}\hdots x_n^{\lambda_n}$. This is because being a standard monomial means not lying in $\text{in}(I)$ and if a polynomial is not in an ideal than all its factors will also not be in it. This shows there are at least $m/d$ monomials in $\text{Std}(R)$ such that the degree of $x_i,1\le i\le n-k$ in each them is $0$. This shows $\langle x_1,\hdots,x_{n-k}\rangle$ is $(R,m/d)$-rich.

This shows that every $k$ dimensional subspace is $(R,m/d)$-rich.
\end{proof}

We also need a simple lemma connecting the value of $m$ and $d$.

\begin{lem}\label{lem-hypermbound}
For any $(k,m,d)$-Hyper-Furstenberg set over $\F_q^n$ we have that $m\le dq^k$.
\end{lem}
\begin{proof}
Consider a fixed $k$ dimensional sub-space $U$. If $S$ is a $(k,m,d)$-Hyper-Furstenberg set then there exist linearly independent hyperplanes $h_1,\hdots,h_{n-k}$ whose intersection is $U$ and $d_1,\hdots,d_{n-k}\in \N,\prod_{i=1}^{n-k}d_i\le d$ and polynomials $g_i,1\le i\le n-k$ of degree at most $d_i-1$ such that $\langle h_1^{d_1}+g_1,\hdots,h_{n-k}^{d_{n-k}}+g_{n-k}\rangle$ intersects with $S$ in at least $m$ points. After a bass change we can take $h_i=x_i$. It is clear that $m$ is upper bounded by the number of points in $\F_q^n$ which vanish on $\langle h_1^{d_1}+g_1,\hdots,h_{n-k}^{d_{n-k}}+g_{n-k}\rangle$. In other words,

$$m\le \text{dim}_{\F_q} \frac{\F_q[x_1,\hdots,x_n]}{\langle h_1^{d_1}+g_1,\hdots,h_{n-k}^{d_{n-k}}+g_{n-k},x_1^q-x_1,\hdots,x_n^q-x_n\rangle}\le dq^k.$$

The last inequality follows by noting that monomials of the form $x_1^{\lambda_1}\hdots x_{n}^{\lambda_n}$ where $0\le \lambda_i\le d_i-1,1\le i\le n-k$ and $0\le \lambda_j\le q-1,n-k+1\le j \le n$ spans the finite algebra 
$$\frac{\F_q[x_1,\hdots,x_n]}{\langle h_1^{d_1}+g_1,\hdots,h_{n-k}^{d_{n-k}}+g_{n-k},x_1^q-x_1,\hdots,x_n^q-x_n\rangle}.$$
\end{proof}

\begin{proof}[Proof of Theorem \ref{thm:hyperBound}]

Given a $(k,m,d)$-Hyper-Furstenberg Set $S\subseteq \F_q^n$, Corollary~\ref{cor:hyperFurstSet2alg} and Lemma~\ref{lem:hyperHomFurst} implies that $\text{hd}(\text{Alg}(S))$ is $(k,m,d)$-Hom-Furstenberg Algebra of dimension $|S|$. Lemma~\ref{lem:hyperToPlane} implies that $\text{hd}(\text{Alg}(S))$ is $(k,m/d)$-Hom-Furstenberg Algebra. By Lemma~\ref{lem-hypermbound}, we know $m\le q^{k}d$ which means we can now apply Theorem~\ref{AlgebraBound} to get,
$$|S|\ge C_{n,k}\frac{m^{n/k}}{d^{n/k}},$$
where $C_{n,k}=\Omega(1/16^{n\ln(n/k)})$.
\end{proof}

\section{Proof of Theorem \ref{th:combinatorial}}\label{sec:ProofCombiBound}

Before proving Theorem \ref{th:combinatorial}, we recall some elementary observations on the linear subspaces of $\F_q^n$.
The number of $k$-dimensional linear subspaces of $\mathbb{F}_q^n$ is
\[ \binom{n}{k}_q = \frac{(q^n-1)(q^n-q)\ldots(q^n-q^{k-1})}{(q^k-1)(q^k-q)\ldots(q^k-q^{k-1})}.\]
Indeed, the numerator counts the number of ordered $k$-tuples of  linearly independent vectors in $\F_q^n$, and the denominator counts the number of ordered $k$-tuples of linearly independent vectors in $\F_q^k$.
Similarly, the number of $k$-dimensional subspaces that contain a fixed $\ell$-dimensional subspace (for $\ell \leq k$) is $\binom{n-\ell}{k-\ell}_q$.
Note that
\begin{align*} \binom{n}{k}_q &= \frac{(q^n-1)(q^n-q)\ldots(q^n - q^{\ell-1})}{(q^k-1)(q^k-q)\ldots(q^k-q^{\ell-1})} \frac{q^\ell(q^{n-\ell}-1)q^\ell(q^{n-\ell}-q)\ldots q^\ell(q^{n-\ell} - q^{k-\ell-1})}{q^\ell(q^{k-\ell}-1)q^\ell(q^{k-\ell}-q)\ldots q^\ell(q^{k-\ell}-q^{k-\ell-1})}\\
&= \frac{(q^n-1)(q^n-q)\ldots(q^n - q^{\ell-1})}{(q^k-1)(q^k-q)\ldots(q^k-q^{\ell-1})} \binom{n-\ell}{k-\ell}_q.
\end{align*}

\begin{thm}
If $S \subset \F_q^n$ is a (k,m)-Furstenberg set in $\F_q^n$ and $\ell$ is an integer with $1 \leq \ell < \log_q(m)+1$, then
\[ |S|^{\ell +1} \geq q^{\ell(n-k)}m(m-1)(m-q)\ldots(m-q^{\ell-1}) .\]
\end{thm}
\begin{proof}
Let $E$ be the set of pairs $(H,P)$, where $H$ is an $(S,m)$-rich affine $k$-plane and $P\subset (S \cap H)$ is an ordered $(\ell+1)$-tuple of affinely independent points of $S$ contained in $H$.

Since $S$ is $(k,m)$-Furstenberg, there is an $(S,m)$-rich $k$-plane parallel to each of the $\binom{n}{k}_q$ subspaces of dimension $k$.
For a fixed $(S,m)$-rich $k$-plane $H$, there are at least $m(m-1)(m-q)\ldots(m-q^{\ell-1})$ ordered tuples of $(\ell+1)$ affinely independent points of $S$ contained in $H$.
Indeed, the affine span of any $t$ affinely independent points contains $q^{t-1}$ points of $\F_q^n$, hence there are at least $m-q^{t-1}$ choices for the next point once the first $t$ are chosen.
Combining these observations, we have
\[ |E| \geq \binom{n}{k}_q m(m-1)(m-q)\ldots(m-q^{\ell-1}).\]

On the other hand, each set of $\ell+1$ affinely independent points spans an $\ell$-plane, which is contained in $\binom{n-\ell}{k-\ell}_q$ affine $k$-planes.
Since the total number of ordered $(\ell + 1)$-tuples of points in $S$ is less than $|S|^{\ell+1}$, we have
\[ |E| \leq |S|^{\ell+1}\binom{n-\ell}{k-\ell}_q.\]

Combining the upper and lower bounds yields
\begin{align*} |S|^{\ell+1} &\geq \frac{(q^n-1)(q^n-q)\ldots(q^n - q^{\ell-1})}{(q^k-1)(q^k-q)\ldots(q^k-q^{\ell-1})} m(m-1)(m-q)\ldots(m-q^{\ell-1}) \\ &\geq q^{\ell(n-k)}m(m-1)(m-q)\ldots(m-q^{\ell-1}),
\end{align*}
as claimed.
\end{proof}

\bibliographystyle{alpha}


\appendix
\section{Appendix: Proof of Theorem \ref{genericInitialThm}}\label{apx:GIN}

\begin{proof}
Let $P=\F[x_1,\hdots,x_n]$. Let $I_d$ be the degree-$d$ part of the homogenous ideal $I$. Let $b$ be a general element in the Borel group described by the indeterminates $b_{ij},1\le i\le j\le n$. The action of $b$ on $I_d$ is a linear map from $I_d$ to the space $P_d$ of degree $d$ homogenous polynomials in $P$. We take degree $d$ monomials in $P$ as a basis of $P_d$ and order them in decreasing order according to the graded lexicographic order. We also take a basis $f_1,\hdots,f_t$ of $I_d$. The action of $b$ on $I_d$ can be written as a matrix $H_d(b)$ in these basis elements. Given a monomial $m$, let the entry at the $f_t$ column and row $m$ of $H_d(b)$ be $h_{m,f_i}(b)$. Observe $h_{m,f_i}(b)$ is a polynomial in the indeterminates $b_{ij}$. $h_{m,f_i}(b)$ is the coefficient of $m$ in the polynomial $\leftidx{^b}{\!f_i}{}$.
Here is a helpful diagram to keep in mind.
\begin{align*}
    H_d(b)= \begin{blockarray}{cccc}
     f_1 & f_2 & \hdots & f_t \\
     \begin{block}{(cccc)}
    h_{x_1^d,f_1}(b) & h_{x_1^d,f_2}(b) & \hdots & h_{x_1^d,f_t}(b)\\
    h_{x_1^{d-1}x_2,f_1}(b)   & h_{x_1^{d-1}x_2,f_2}(b) & \hdots & h_{x_1^{d-1}x_2,f_t}(b)\\
    \vdots & \vdots & \vdots & \vdots\\
    \end{block}
    \end{blockarray}
    \qquad\text{\parbox{5cm}{Rows are indexed by monomials of degree $d$ in decreasing grlex order}}
\end{align*}

Consider the set $M(H_d(b))$ of $t\times t$ submatrices of $H_d(b)$. This set can be indexed by an ordered $t$-tuple of degree $d$ monomials corresponding to the rows chosen to form the $t\times t$ submatrices. The grlex monomial order can be used to define an order over ordered $t$-tuples of monomials $a_1>\hdots>a_t$ by ordering them lexicographically. Let $M'(H_d(b))$ be the set of submatrices in $M(H_d(b))$ with non-zero determinant, as a polynomial in $b$. This set will be non-empty as the action of $b$ on $K_d$ is invertible. Each submatrix in $M'(H_d(b))$ corresponds to an ordered $t$-tuple of monomials. Let $m_1>\hdots>m_t$ be the largest among them. If we perform row or column operations over $H_d(b)$ in the field of rational functions over $b$, the tuples determining elements in $M'$ will not change. 

Let the vector space spanned by the monomials $m_1,\hdots,m_t$ be $K_d$. Let $q_d$ be the non-zero polynomial obtained by taking the determinant of the submatrix of $H_d(b)$ corresponding to the rows $m_1>\hdots>m_t$. We construct $K_d$ and $q_d$ for every $d\ge 0$. For a general $g\in \B(n,\F)$ we can also consider the set $M'(H_d(g))$ of $t\times t$ submatrices with non-zero determinants in $H_d(g)$. Each of the elements corresponds to a unique ordered $t$-tuple of monomials. Again, this set is non-empty as the action of $g$ on $I_d$ is invertible. Let $k_1(g)>\hdots>k_t(g)$ be the largest such tuple in $M'(H_d(g))$. We make the following claim.
\begin{claim}
$k_1(g),\hdots,k_t(g)$ form a basis of $\text{in}(\leftidx{^g}{\!I_d}{})$.
\end{claim}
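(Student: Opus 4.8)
The plan is to show that the monomials $k_1(g) > \dots > k_t(g)$ (the lexicographically largest $t$-tuple of row-indices giving a nonzero $t \times t$ minor of $H_d(g)$) are exactly the initial monomials of a basis of $\leftidx{^g}{\!I_d}{}$, hence span $\text{in}(\leftidx{^g}{\!I_d}{})$ in degree $d$. First I would recall that the columns of $H_d(g)$ span (the coefficient vectors of) $\leftidx{^g}{\!I_d}{}$; the rows are indexed by degree-$d$ monomials listed in decreasing grlex order. Performing column operations over $\F$ replaces the given basis $\{\leftidx{^g}{\!f_i}{}\}$ by another basis of $\leftidx{^g}{\!I_d}{}$ and does not change which row-tuples yield a nonzero minor, so I am free to column-reduce.

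The key step is a reduced row-echelon form argument adapted to the grlex order. Let $m_1 > \dots > m_t$ be the lexicographically largest $t$-tuple of monomials indexing a nonzero minor of $H_d(g)$ (this is $k_1(g),\dots,k_t(g)$). I claim that, after column operations, one can arrange that the submatrix on rows $m_1,\dots,m_t$ is the identity, and moreover that for each $j$, the $j$-th column $v_j$ has its top nonzero entry (in the decreasing-grlex ordering of all degree-$d$ monomials) precisely in row $m_j$: i.e.\ $\text{in}(v_j) = m_j$. The point is maximality of the tuple: if some column, after clearing the entries in rows $m_1,\dots,m_t$ except one, had a nonzero entry in a row $m'$ with $m' > m_j$ strictly larger than the corresponding $m_j$, we could swap $m_j$ for $m'$ and, using that the resulting system of rows still has full rank (because the leading entries are in distinct rows arranged in a ``staircase''), obtain a lexicographically larger tuple with nonzero minor — contradiction. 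So the column-reduced basis $v_1,\dots,v_t$ of $\leftidx{^g}{\!I_d}{}$ has $\text{in}(v_j) = m_j$ with the $m_j$ pairwise distinct.

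From here the conclusion follows from the theory of standard monomials already developed: the $v_j$ are $t$ polynomials in $\leftidx{^g}{\!I_d}{}$ with distinct leading monomials $m_1,\dots,m_t$, so $\{m_1,\dots,m_t\} \subseteq \text{in}(\leftidx{^g}{\!I_d}{})$; conversely any $f \in \leftidx{^g}{\!I_d}{}$ is an $\F$-combination of the $v_j$, and by the usual leading-term cancellation argument (as in the proof of Theorem~\ref{ThMono}) $\text{in}(f)$ must equal one of the $m_j$. Hence in degree $d$ the monomials in $\text{in}(\leftidx{^g}{\!I_d}{})$ are exactly $\{m_1,\dots,m_t\} = \{k_1(g),\dots,k_t(g)\}$, which is what was claimed. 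I expect the main obstacle to be making the ``swap to get a lexicographically larger tuple'' step fully rigorous — specifically, verifying that after the swap the new row-tuple still indexes a \emph{nonzero} minor, which requires tracking the staircase pattern of leading terms carefully rather than waving at Gaussian elimination.
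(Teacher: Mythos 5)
Your proposal is correct and follows essentially the same route as the paper: column-reduce $H_d(g)$ so that the submatrix on the maximal tuple of rows becomes the identity, use maximality of that tuple to rule out nonzero entries above the pivot row in each column, conclude each column has initial monomial $k_i(g)$, and finish with the distinct-leading-monomial argument. The step you flag as the main obstacle is immediate once the identity submatrix is in place: expanding the swapped $t\times t$ minor along the column whose only entries in the pivot rows were cleared shows its determinant is $\pm$ the new nonzero entry times a $(t-1)\times(t-1)$ identity determinant, hence nonzero, which is exactly how the paper's argument works.
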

\begin{proof}
 First, note the column space of $H_d(g)$ is by construction $\leftidx{^g}{\!I_d}{}$. Then as the submatrix corresponding to the rows has non-zero determinant we can perform a series of column operations, or equivalently left multiplying by a $t\times t$ invertible matrix to change the submatrix corresponding to the rows $k_1(g)>\hdots>k_t(g)$ into identity without changing the column space. Let this new matrix be $H'_d(g)$. As $k_1(g)>\hdots>k_t(g)$ was the largest tuple in $M'(H_d(g))$ we claim that in column $i$ of $H'_d(g)$ there is no non-zero element over row $k_i(g)$. By construction there can't be one in rows $k_1(g),\hdots,k_{i-1}(g)$. There can't be a non-zero entry in other rows because if there were then replacing $k_i(g)$ by that row in $k_1>\hdots>k_t(g)$ (and possibly re-ordering) we will get $t\times t$ submatrix with non-zero determinant indexed by a larger tuple. This would contradict the maximality of $k_1(g)>\hdots>k_t(g)$.
\end{proof}
 
As $\F$ is infinite we can find an element $g\in \B(n,\F)$ such that $q_d(g)\ne 0$. Consider the set $M'(H_d(g))$ of $t\times t$ submatrices with non-zero determinants in $H_d(g)$. Each of the elements corresponds to a unique ordered $t$-tuple of monomials. Again, this set is non-empty as the action of $g$ on $I_d$ is invertible. The largest ordered tuple will be $m_1>\hdots>m_t$. This is the case because if there were a larger tuple then that would mean the sub-matrix corresponding to that in $H_d(b)$ will have a non-zero determinant. This would contradict the maximality of $m_1>\hdots>m_t$. This shows that if $q_d(g)\ne 0$, then $\text{in}(\leftidx{^g}{\!I_d}{})$ is spanned by $m_1,\hdots,m_t$ which means it equals $K_d$.

We claim $K=\bigoplus_{d\ge 0} K_d$ is an ideal. It suffices to show that $P_1K_d\subset K_{d+1}$. We can find an element $g\in \B(n,\F)$ such that $q_d(g)q_{d+1}(g)\ne 0$. We then have $\text{in}(\leftidx{^g}{\!I_d}{})$ spans $K_d$ and $\text{in}(\leftidx{^g}{\!I_{d+1}}{})$ spans $K_{d+1}$. As $\text{in}(\leftidx{^g}{\!I}{})$ is an ideal the claim follows. As each $K_d$ is spanned by monomials we have that $K$ is a monomial ideal.

We finally finish proving the first claim. We know that ideals over $P=\F[x_1,\hdots,x_n]$ are finitely generated. Say the generators of $K$ and $I$ are of degree at most $l$. We set $q=\prod_{d=0}^l q_d$. As the generators of $I$ and $K$ are of degree at most $l$, we have for any $g\in \B(n,\F),q(g)\ne 0$ that $\text{in}(\leftidx{^g}{\!I}{})=K$ proving the first claim.

To prove the second claim, we want to show for all $g\in \B(n,\F)$, $\leftidx{^g}{\!K}{}=K$. As a simplification we can take $I=K$ as the generic initial ideal of $K$ is $K$ itself. As before consider the matrix $H_d(b)$ for an indeterminate $b$. For an element $g\in \B(n,\F)$, consider again the set $M'(H_d(g))$. Pick the largest ordered tuple $k_1(g)>k_2(g)\hdots>k_t(g)$ corresponding to a sub-matrix in $M'(H_d(g))$. We claim this tuple must be smaller than $m_1>\hdots>m_t$. If it were larger then that would imply that the determinant of the submatrix corresponding to $k_1(g)>k_2(g)\hdots>k_t(g)$ in $H_d(b)$ will be non-zero which will contradict the maximality of $m_1>\hdots>m_t$. We will use this fact to prove the second claim.

To prove the second claim it suffices to prove that $K$ is invariant under the action of diagonal matrices and elementary upper triangular matrices. Recall, elementary upper triangular matrices are invertible upper triangular matrices with each diagonal entry being non-zero and only one other non-zero entry. The statement is trivial for diagonal matrices. Let $g\in \B(n,\F)$ be an elementary upper triangular matrix, such that $\leftidx{^g}{\!K_d}{}\not\subseteq K_d$. The basis of $K_d$ is the set of monomials $m_1>\hdots>m_t$. Recall, the grlex order satisfies $x_1>x_2\hdots>x_n$. As $g$ is upper triangular and the basis of $K_d$ is monomial, we see that in $H_d(g)$ all the non-zero entries in the column $m_i$ will all be in rows indexed by monomials at least as large as $m_i$ in the grlex order. This is the case because $\leftidx{^g}{\!m_i}{}$ will all be spanned by monomials larger or equal to $m_i$. As $g$ is elementary upper triangular for each column $m_i$, the row $m_i$ will have a non-zero entry. The previous two statements imply the set $M'(H_d(g))$ will contain the $t\times t$ submatrix corresponding to the rows $m_1>\hdots>m_t$ and the previous paragraph implies it will be the largest such tuple in this set.

If we perform column operations over the field $\F$ on $H_d(g)$ to produce the matrix $H'_d(g)$, the column span of $H'_d(g)$ is the same as the column span of $H_d(g)$ which is precisely $\leftidx{^g}{\!K_d}{}$. The set $M'(H'_d(g))$ will also be the same as $M'(H_d(g))$. We perform Gaussian elimination on $H_d(g)$ over the field $\F$ to produce a lower triangular matrix $H'_d(g)$. As this process may involve some column exchange operations we number the columns as $1,\hdots,t$. Let $k_i(g)$ be the largest monomial such that the element at column $i$ and row $k_i(g)$ be non-zero. By construction, $k_1(g)>\hdots>k_t(g)$. The following diagram might be helpful,

\begin{align*}
    H'_d(g)= \begin{blockarray}{cccc}
     1 & 2 & \hdots & t \\
     \begin{block}{(cccc)}
    \vdots & \vdots & \vdots & \vdots\\
    h'_{k_1(g),1} & 0 & \hdots & 0\\
    \vdots & \vdots & \vdots & \vdots\\
    h'_{k_2(g),1}   & h'_{k_2(g),2} & \hdots & 0\\
    \vdots & \vdots & \vdots & \vdots\\
    h'_{k_t(g),1}   & h'_{k_t(g),2} & \hdots & h'_{k_t(g),t}\\
    \vdots & \vdots & \vdots & \vdots\\
    \end{block}
    \end{blockarray}
    \qquad\text{\parbox{5cm}{Rows are indexed by monomials of degree $d$ in decreasing grlex order}}
\end{align*}

where $h'_{m,i}\in \F$ is the entry of the matrix $H'_d(g)$ in row $m$ and column $i$.

The sub-matrix corresponding to the rows $k_1(g),\hdots,k_t(g)$ will have a non-zero determinant and hence belong to $M'(H'_d(g))=M'(H_d(g))$. The previous paragraph implies $k_1(g)>\hdots>k_t(g)$ is smaller than $m_1>\hdots>m_t$. But as $H'_d(g)$ is lower triangular if $k_1(g)>\hdots>k_t(g)$ does not equal $m_1>\hdots>m_t$, the sub-matrix corresponding to the rows $m_1>\hdots>m_t$ will have zero determinant contradicting the fact that it lies in $M'(H_d(g))$. This implies $\leftidx{^g}{\!K_d}{}\subseteq K_d$. As $\leftidx{^g}{\!K_d}{}$ has the same dimension as $K_d$, they must be equal. This proves the second claim.

\end{proof}

\end{document}